\tikzset{curve/.style={settings={#1},to path={(\tikztostart)
			.. controls ($(\tikztostart)!\pv{pos}!(\tikztotarget)!\pv{height}!270:(\tikztotarget)$)
			and ($(\tikztostart)!1-\pv{pos}!(\tikztotarget)!\pv{height}!270:(\tikztotarget)$)
			.. (\tikztotarget)\tikztonodes}},
	settings/.code={\tikzset{quiver/.cd,#1}
		\def\pv##1{\pgfkeysvalueof{/tikz/quiver/##1}}},
	quiver/.cd,pos/.initial=0.35,height/.initial=0}
\tikzset{tail reversed/.code={\pgfsetarrowsstart{tikzcd to}}}
\tikzset{2tail/.code={\pgfsetarrowsstart{Implies[reversed]}}}
\tikzset{2tail reversed/.code={\pgfsetarrowsstart{Implies}}}
\tikzset{no body/.style={/tikz/dash pattern=on 0 off 1mm}}
\newtheorem{thm}{Theorem}[section]
\newtheorem{lem}[thm]{Lemma}
\newtheorem{prop}[thm]{Proposition}
\newtheorem{cor}[thm]{Corollary}
\theoremstyle{definition}
\theoremstyle{remark}
\newtheorem{rem}[thm]{Remark}
\newtheorem{rec}[thm]{Recollection}
\newtheorem{nota}[thm]{Notation}
\newtheorem*{Ack}{Acknowledgements}
\numberwithin{equation}{section}
\newcommand{\N}{\ensuremath{\mathbb{N}}}
\newcommand{\defeq}{\vcentcolon=}
\newcommand{\inj}[1]{%
	\begin{tikzcd}[cramped, sep=small, ampersand replacement=\&]
		{} \arrow[hook, r, "#1"] \& {}
	\end{tikzcd}%
}
\newcommand{\sur}[1]{%
	\begin{tikzcd}[cramped, sep=small, ampersand replacement=\&]
			{} \arrow[two heads, r, "#1"] \& {}
	\end{tikzcd}%
}
\DeclareMathOperator{\mmod}{\mathsf{mod}}
\DeclareMathOperator{\soc}{soc}
\DeclareMathOperator{\rad}{rad}
\DeclareMathOperator{\hd}{hd}
\DeclareMathOperator{\J}{J}
\DeclareMathOperator{\Hom}{Hom}
\DeclareMathOperator{\im}{im}
\DeclareMathOperator{\coker}{coker}
\DeclareMathOperator{\vspan}{span}
\DeclareMathOperator{\Char}{char}
\DeclareMathOperator{\ppdim}{ppdim}
\title{The Permutation Dimension of the Klein 4-Group}
\date{July 2025}
\author{Henry Harman}
\address{Henry Harman, Mathematics Institute, University of Warwick}
\email{henryharman02@gmail.com}
\begin{document}
	
\begin{abstract}
	Let $G$ be a finite group and $k$ a field of characteristic $p > 0$. Balmer and Gallauer's recent result on finite $p$-permutation resolutions of $kG$-modules motivates the study of an intriguing new invariant; the $p$-permutation dimension. Following Walsh's success with cyclic groups of prime order, we compute the (global) $p$-permutation \linebreak dimension of the Klein 4-group in characteristic $p \defeq 2$, along with the dimensions for each of its indecomposable modules.
\end{abstract}

\subjclass[2020]{20C20} 

\keywords{modular representation theory, permutation modules, permutation dimension}

\maketitle

\vspace{-0.5\baselineskip}

{
	\setlength{\parskip}{0pt}
	\tableofcontents
}

\vspace{-1.5\baselineskip}

\section{Introduction}
\label{sec:intro}
\emph{Let $G$ be a finite group and $k$ a field of characteristic $p > 0$. All modules are assumed to be finitely generated left modules.}

A $kG$-module is called a \emph{permutation module} if it admits a $k$-basis closed under the action of $G$, and a \emph{$p$-permutation module} if it admits a $k$-basis closed under the action of a Sylow $p$-subgroup of $G$. Equivalently, $p$-permutation modules are those which occur as direct summands of permutation modules, much like projectives can be characterised as direct summands of free modules. Every free module is a permutation module, so $p$-permutation modules simultaneously generalise both permutation and projective modules.

Resolutions by projective modules are important objects of study in many contexts, but the projective dimension is just far too coarse when it comes to group algebras; any non-projective $kG$-module has infinite projective dimension.

One might hope that using $p$-permutation modules in place of projectives will provide a more meaningful distinction between $kG$-modules. Balmer and Gallauer's result indeed affirms this, but first we recall some key definitions:

Let $M$ be a $kG$-module. A \emph{$p$-permutation resolution of $M$} is an exact complex:
\[\begin{tikzcd}[sep=small]
	\cdots & {P_{i}} & \cdots & {P_{2}} & {P_{1}} & {P_{0}} & M & 0
	\arrow[from=1-1, to=1-2]
	\arrow[from=1-2, to=1-3]
	\arrow[from=1-3, to=1-4]
	\arrow[from=1-4, to=1-5]
	\arrow[from=1-5, to=1-6]
	\arrow[from=1-6, to=1-7]
	\arrow[from=1-7, to=1-8]
\end{tikzcd}\]
where the $P_{i}$ are $p$-permutation modules.
We define its \emph{length} as $\sup \left \{i \in \N \vcentcolon P_{i} \neq 0 \right \}$.

\pagebreak
The minimal length of such a resolution is called the \emph{$p$-permutation dimension of $M$}, denoted $\ppdim(M)$. Balmer and Gallauer \cite{BG23} showed that every $kG$-module admits a finite resolution by $p$-permutation modules, hence $\ppdim(M) < \infty$ for any $M$.

We also define the \emph{global $p$-permutation dimension of $G$ over $k$} as follows: \[\ppdim_{k}(G) \defeq \sup \left\{\ppdim(M) \vcentcolon M \in \mmod(kG) \right\}\]

As always, the interesting case occurs when $p$ divides $|G|$. Otherwise, $kG$ is semisimple and every module is projective, hence $p$-permutation, meaning $\ppdim_{k}(G) = 0$.

Due to the recency of this result, little is known about the homological implications of the $p$-permutation dimension. Walsh \cite{Wal25} took the first steps towards gaining some intuition, proving the following theorem:

\begin{thm}[Walsh, 2025]
	Let $k$ be a field of characteristic $p > 0$. Then:
	\[
	\ppdim_{k}\left(C_{p}\right) = p-2
	\]
\end{thm}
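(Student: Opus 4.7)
The plan is to first identify $kC_p \cong k[x]/(x^p)$ via $x = g - 1$, so that the indecomposable $kC_p$-modules are the uniserials $M_n := k[x]/(x^n)$ for $1 \le n \le p$. Since $C_p$ is itself a $p$-group, every $p$-permutation module is literally a permutation module; and since the only transitive $C_p$-sets give $M_1 = k$ and $M_p = kC_p$, the indecomposable $p$-permutation modules are precisely these two. By Krull--Schmidt every $p$-permutation module has the form $k^a \oplus (kC_p)^b$, and the problem reduces to computing $\ppdim(M_n)$ for each $n$. Three cases are immediate: $\ppdim(M_1) = \ppdim(M_p) = 0$, and the short exact sequence $0 \to k \to kC_p \to M_{p-1} \to 0$ (inclusion of the socle into the projective cover) yields $\ppdim(M_{p-1}) \le 1$.

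For the upper bound on the remaining $M_n$, I would construct resolutions by combining two families of surjections. The projective cover $0 \to M_{p-n} \to kC_p \to M_n \to 0$ gives the recursion $\ppdim(M_n) \le 1 + \ppdim(M_{p-n})$. The \emph{socle-augmented cover} $k \oplus kC_p \twoheadrightarrow M_n$ --- where $k$ maps into $\soc(M_n)$ and $kC_p$ projects onto $M_n$ --- has a kernel that can be computed directly; for $n = 2$ a short calculation shows the kernel is cyclic of length $p-1$, i.e.\ $M_{p-1}$, so that $\ppdim(M_2) \le 2$. Alternating such covers (together with variants using additional trivial summands) should inductively produce a $p$-permutation resolution of length at most $p - 2$ for every $M_n$.

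The hard direction is the lower bound: exhibiting some $M_n$ whose $p$-permutation dimension is $p - 2$. My strategy is to track socle dimensions along any hypothetical resolution. The crucial observation is that for any surjection $P = k^a \oplus (kC_p)^b \twoheadrightarrow M_n$ with $n < p$, the restriction to $\soc(P)$ factors through $\soc(M_n) = k$, because $x^{p-1}$ annihilates every $M_n$ with $n < p$; consequently $\dim \soc(\ker) \ge a + b - 1$. Propagating this along a resolution, together with the embedding constraint $\soc(K) \hookrightarrow \soc(P)$ and a Krull--Schmidt analysis of submodules of $k^a \oplus (kC_p)^b$, should yield a system of numerical inequalities that rules out short resolutions of a suitable extremal $M_n$ (plausibly a ``middle'' module with $n$ close to $p/2$, in view of the patterns one sees for small primes).

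The main obstacle I expect is calibrating the socle-counting argument sharply enough to produce exactly $p - 2$ rather than a looser estimate, and cleanly pinpointing the extremal $M_n$. Both steps will require a careful combinatorial analysis of all possible $p$-permutation covers of the candidate module and the resulting kernels; this is the technical heart of the argument.
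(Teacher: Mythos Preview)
The paper does not contain a proof of this theorem. It is stated as a result due to Walsh and cited from \cite{Wal25}; the body of the paper is devoted entirely to the Klein four-group and never returns to the cyclic case. There is therefore no ``paper's own proof'' against which to compare your proposal.

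That said, your proposal is not a proof but a plan, and you are candid about this. The upper-bound half is plausible: the identification $kC_p\cong k[x]/(x^p)$, the list of indecomposables $M_n$, and the two permutation indecomposables $k$ and $kC_p$ are all standard, and building resolutions by alternating projective covers with socle-augmented covers is a reasonable strategy (though you have only worked out the kernel explicitly for $n=2$, and ``alternating such covers \ldots\ should inductively produce'' is not yet an argument). The lower-bound half is where the real content lies, and here you have only an outline: tracking socle dimensions and hoping the resulting inequalities force length $p-2$ for some extremal $M_n$. You yourself flag that calibrating this sharply and identifying the correct extremal module are the technical heart of the matter and remain undone. As written, this is a promising sketch rather than a proof; to assess whether it aligns with Walsh's actual argument you would need to consult \cite{Wal25} directly.
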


Note that when $G$ is a $p$-group, permutation modules and $p$-permutation modules are exactly the same thing. Moreover, the group algebra $kG$ is a local ring, so free and projective modules are also exactly the same thing. When $G \defeq C_{p}$, the group algebra has finite representation type, further simplifying the picture.

In this article, we focus on the Klein 4-group $V_{4}$ over a field $k$ of characteristic two. The group algebra $kV_{4}$ has \emph{tame} representation type, meaning that despite there being infinitely many indecomposables up to isomorphism, there still exists a `reasonable' way of classifying them.

We recall this classification in \ref{sec:class} and fix notation for each indecomposable. Namely, the indecomposables consist of the regular and trivial modules, $kV_{4}$ and $k$, in addition to three infinite families denoted by $M_{2n+1}$\,, $W_{2n+1}$ and $E_{f,n}$\,, with $n \geq 1$ and either $f \in k[t]$ a monic irreducible polynomial or $f = \infty$.\footnote{We also set $E_{f} \defeq E_{f,1}$.} We are now ready to state the main result:

\begin{thm}
\label{thm:main}
	Let $k$ be a field of characteristic $p \defeq 2$. Then:
	\[
	\ppdim_{k}\left(V_{4}\right) = 2
	\]
	
	Moreover, for $N$ an indecomposable $kV_{4}$-module, we have:
	\[
	\ppdim(N) = 
	\begin{cases}
		0 & \text{if $N \in \big\{k \,, \; E_{t} \,, \; E_{t+1_{k}} \,, \; E_{\infty} \,, \; kV_{4}\big\}$}\\
		1 & \text{if $N \in \big\{M_{3} \,, \; M_{5} \,, \; M_{7} \,, \; W_{3} \,, \; W_{5} \,, \; E_{t,2} \,, \; E_{(t+1_{k}),2} \,, \; E_{\infty,2}\big\}$}\\
		2 & \text{otherwise}
	\end{cases}
	\]
\end{thm}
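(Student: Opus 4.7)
The proof naturally splits into three stages corresponding to the three cases.

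First, I would classify the indecomposable $p$-permutation $kV_4$-modules. Since $V_4$ is a $2$-group, $p$-permutation modules coincide with permutation modules, and — $V_4$ being abelian — the indecomposable ones are exactly the transitive permutation modules $k(V_4/H)$, one for each subgroup $H \leq V_4$. The five subgroups yield five indecomposables: $k = k(V_4/V_4)$, three $2$-dimensional modules corresponding to the order-$2$ subgroups, and $kV_4 = k(V_4/1)$. Matching these against the classification of Section~\ref{sec:class} identifies the three $2$-dimensional ones as $E_t$, $E_{t+1_k}$, and $E_\infty$, settling the first case.

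Second, I would prove the upper bounds by exhibiting explicit resolutions. For each of the eight middle-case modules I would construct a short exact sequence $0 \to Q \to P \to N \to 0$ with $P$ and $Q$ direct sums of Stage-$1$ permutation modules, read off from the explicit structure of $N$ in the string/band classification. For the remaining indecomposables, the plan is to construct length-$2$ resolutions uniformly across the families $M_{2n+1}$, $W_{2n+1}$, and $E_{f,n}$: surject a suitable permutation module $P_0$ onto $N$, decompose the kernel into indecomposables, and splice in the length-$1$ resolutions of those summands obtained in the middle case. This gives $\ppdim(N) \leq 2$ for every indecomposable, hence the upper bound $\ppdim_k(V_4) \leq 2$.

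Third, and most delicately, I would prove the matching lower bounds. For the middle case, ruling out $\ppdim = 0$ reduces to checking that none of the eight listed modules lies in the Stage-$1$ classification, which is immediate by comparing dimensions and module structure. The substantive task is to show that no indecomposable from the ``otherwise'' case admits a length-$1$ permutation resolution, i.e.\ that for every surjection $P \twoheadrightarrow N$ from a permutation module the kernel fails to be permutation. The plan is to produce an obstruction functional on $\mmod(kV_4)$ — behaving controllably on short exact sequences and sharply restricted on permutation modules — that rules this out. Natural candidates include the fixed-point dimensions $\dim N^{H_i}$ for each order-$2$ subgroup $H_i \leq V_4$, or equivalently the Brauer quotients of $N$ at each $H_i$, both easily computable from the string/band data. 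The principal obstacle is making this obstruction sharp enough to uniformly exclude length-$1$ resolutions across infinite families of indecomposables, likely requiring one to combine several invariants or to mount a careful case analysis against the Auslander--Reiten structure of $kV_4$; by contrast, Stages $1$ and $2$ reduce to explicit bookkeeping once Section~\ref{sec:class} is in hand.
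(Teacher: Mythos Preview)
Your Stages 1 and 2 match the paper exactly (Theorems~\ref{thm:dim0} and~\ref{thm:dim1}, Propositions~\ref{prop:ppdimM}--\ref{prop:ppdimE}).

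Stage 3 is where the gap lies. You propose an unspecified obstruction via fixed-point dimensions or Brauer quotients and openly flag sharpening it as the principal obstacle; that is not yet a proof. The paper's mechanism is specific and not among your candidates. First, any length-one resolution is pruned to one in which the surjection $P' \twoheadrightarrow M$ is \emph{essential} and the kernel sits inside a \emph{projective-free} permutation module (Lemma~\ref{lem:essPerm}). The Snake Lemma applied to projective covers then converts this into a short exact sequence $0 \to \Omega(P') \to \Omega(M) \to N \to 0$ with $\Omega(P')$ a direct sum of copies of $E_t$, $E_{t+1_k}$, $E_\infty$ (Corollary~\ref{cor:SES}). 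The crux is Theorem~\ref{thm:pfsub}: the only indecomposables embedding in a projective-free permutation module are $k$, $E_t$, $E_{t+1_k}$, $E_\infty$, $M_3$, $M_5$. This is proved by computing $\ker(a)+\ker(a+b)+\ker(b)$ in each indecomposable (Propositions~\ref{prop:mker}--\ref{prop:efker}) and dualising. Since $\ker(a)=M^{\langle\sigma\rangle}$, $\ker(b)=M^{\langle\tau\rangle}$, $\ker(a+b)=M^{\langle\sigma\tau\rangle}$, this \emph{is} a fixed-point computation, so your instinct was on target; what you are missing is the reduction to essential surjections, the passage to $\Omega(M)$, and the dualisation that turns fixed-point data into a constraint on submodules rather than a numerical invariant. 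A final case analysis on the possible $\Omega(P')$ and the resulting quotients (Lemmas~\ref{lem:cokerE}, \ref{lem:cokerW}) then excludes each remaining family.
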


Resolutions are given in Section \ref{sec:resol}. Constructing them is relatively easy, but proving that they are minimal is more of a challenge. We establish their minimality in Section \ref{sec:minim}.

\begin{Ack}
	I would like to give thanks to my project supervisor, Martin Gallauer, for suggesting an excellent topic on which to write my dissertation. His insight was crucial to the formulation of many key arguments in this paper.
\end{Ack}

\section{Indecomposable Modules}
\label{sec:indec}
For the remainder of this article, we fix a field $k$ of characteristic $p \defeq 2$. In this section, we recall the classification of indecomposable $kV_{4}$-modules, taking note of their corresponding duals and Heller shifts. We also identify the indecomposable permutation modules.

\subsection{The Klein 4-Group}
\label{sec:klein}
\mbox{}\\
The Klein 4-group has the following group presentation:
\[
V_{4} \defeq \left \langle \sigma, \tau \: \middle | \: \sigma^{2} = \tau^{2} = (\sigma\tau)^{2} = e \right \rangle
\]
Hence $V_{4} = \left\{e, \sigma, \tau, \sigma\tau\right\}$, which we can take as a $k$-basis for $kV_{4}$. Since $\Char(k) = 2$, we have $1_{k} + 1_{k} = 0_{k}$ and $-1_{k} = 1_{k}$, which we will make frequent and implicit use of. The group algebra $kV_{4}$ can be identified with $k[a,b]\,\big/\langle a^{2}, b^{2} \rangle$, which is generated as a $k$-algebra by $\{a,b\}$ along with the relations $a^{2} = b^{2} = ab - ba = 0_{k}$, or equivalently, $a^{2} = b^{2} = (a+b)^{2} = 0_{k}$. The following isomorphism of $k$-algebras allows us to translate between $k[a,b]\,\big/\langle a^{2}, b^{2} \rangle$-modules and $kV_{4}$-modules:
\[
\phi \vcentcolon k[a,b]\,\big/\langle a^{2}, b^{2} \rangle \to kV_{4}, \hspace{2mm} a \mapsto \sigma + e, \hspace{2mm} b \mapsto \tau + e 
\]
\begin{rem}
	Under this identification, we record some key features of the $k$-algebra $kV_{4}$:
	\begin{enumerate}[label=(\roman*)]
		
		\item $kV_{4}$ is a commutative Artinian local ring with unique maximal ideal $\J(kV_{4}) = \langle a, b \rangle$ and unique minimal ideal $\soc(kV_{4}) = \langle ab \rangle$. A natural $k$-basis is $\{1_{k}, a, b, ab\}$.
		
		\item Projective, injective and free modules are exactly the same thing.
		
		\item Permutation and $p$-permutation modules are exactly the same thing.\footnote{We consequently speak of `permutation resolutions' and `permutation dimensions' from now on.}
		
		\item The trivial module $k$ is the unique simple module and the regular module $kV_{4}$ is the unique projective indecomposable module (up to isomorphism).
		
	\end{enumerate}
\end{rem}

\subsection{The Classification of Indecomposable Modules}
\label{sec:class}
\mbox{}\\
We now recall the classification of indecomposable $kV_{4}$-modules due to (most notably) the work of Ba\v{s}ev as well as Heller and Reiner. This can be found in \cite[Theorem 4.3.3]{Ben91}, \cite[p233]{Web16} or \cite[Theorem 2.1]{CM12}. Observe that specifying a $kV_{4}$-module amounts to specifying the action of $a$ and $b$ on a $k$-basis, such that the relations $a^{2} = b^{2} = (a+b)^{2} = 0_{k}$ are respected. This information can be presented in the form of a diagram. For example, the regular and trivial modules $kV_{4}$ and $k$ are given as follows:
%
%
\[\begin{tikzcd}[sep=tiny]
	& {\overset{1_{k}}{\bullet}} \\
	{\overset{a}{\bullet}} && {\overset{b}{\bullet}} \\
	& {\overset{ab}{\bullet}}
	\arrow[no head, from=1-2, to=2-1]
	\arrow[no head, from=1-2, to=2-3]
	\arrow[no head, from=2-1, to=3-2]
	\arrow[no head, from=2-3, to=3-2]
\end{tikzcd}
\hspace{20mm}
\begin{tikzcd}[sep=tiny]
	{\overset{1_{k}}\bullet}
\end{tikzcd}\]
In general, the indecomposable $kV_{4}$-modules are exhibited using a collection of nodes indexed by a $k$-basis, together with diagonal lines between the nodes. We interpret these lines as being oriented from the higher node to the lower node.

A south-west/south-east branch indicates that $a$/$b$ sends the basis element at the higher node to the basis element at the lower node. The lack of a south-west/south-east branch coming from a node indicates that $a$/$b$ annihilates the corresponding basis element.

The non-trivial odd-dimensional indecomposables come in two families; either they are `M-shaped' or `W-shaped'. For each $n\geq 1$, there are $kV_{4}$-modules $M_{2n+1}$ and $W_{2n+1}$:
\[\begin{tikzcd}[sep=tiny]
	& {\overset{u_{1}}\bullet} && {\overset{u_{2}}\bullet} && {} & {\overset{u_{n}}\bullet} \\
	{\underset{v_{0}}\bullet} && {\underset{v_{1}}\bullet} && {\underset{v_{2}}\bullet} & {} && {\underset{v_{n}}\bullet}
	\arrow[no head, from=1-2, to=2-1]
	\arrow[no head, from=1-2, to=2-3]
	\arrow[no head, from=1-4, to=2-3]
	\arrow[no head, from=1-4, to=2-5]
	\arrow["\ldots"{description, pos=0.4}, draw=none, from=1-6, to=2-6]
	\arrow[shorten >=18pt, no head, from=1-7, to=2-6]
	\arrow[no head, from=1-7, to=2-8]
	\arrow[shorten >=14pt, no head, from=2-5, to=1-6]
\end{tikzcd}
\hspace{10mm}
\begin{tikzcd}[sep=tiny]
	{\overset{u_{0}}\bullet} && {\overset{u_{1}}\bullet} && {\overset{u_{2}}\bullet} & {} && {\overset{u_{n}}\bullet} \\
	& {\underset{v_{1}}\bullet} && {\underset{v_{2}}\bullet} && {} & {\underset{v_{n}}\bullet}
	\arrow[no head, from=1-1, to=2-2]
	\arrow[no head, from=1-3, to=2-2]
	\arrow[no head, from=1-3, to=2-4]
	\arrow[shorten >=18pt, no head, from=1-5, to=2-6]
	\arrow["\ldots"{description, pos=0.4}, draw=none, from=1-6, to=2-6]
	\arrow[no head, from=1-8, to=2-7]
	\arrow[no head, from=2-4, to=1-5]
	\arrow[shorten >=14pt, no head, from=2-7, to=1-6]
\end{tikzcd}\]
We have $\dim_{k}(M_{2n+1}) = \dim_{k}(W_{2n+1}) = 2n+1$. Our standard labelling for their $k$-bases is $\left\{u_{i} \vcentcolon 1 \leq i \leq n\right\} \cup \left\{v_{i} \vcentcolon 0 \leq i \leq n\right\}$ and $\left\{u_{i} \vcentcolon 0 \leq i \leq n\right\} \cup \left\{v_{i} \vcentcolon 1 \leq i \leq n\right\}$ respectively.

The non-projective even-dimensional indecomposables $E_{f,n}$ are parametrised by pairs $(f, n)$ where $n \geq 1$ and either $f \in k[t]$ is a monic irreducible polynomial or $f = \infty$.\footnote{The use of the symbol $\infty$ is motivated by the case in which $k$ is algebraically closed. The monic irreducible polynomials are then of the form $t + \alpha$ for $\alpha \in k$, so the $E_{f,n}$ modules are parametrised by points on the projective line $k \cup \{\infty\}$ paired with positive integers.}
\linebreak
In the first case, we have $f^{n} = t^{m} + \sum_{i = 0}^{m - 1}(\alpha_{i} t^{i})$ for some $\alpha_{i} \in k$, where $m \defeq n \deg(f)$. The diagram for $E_{f,n}$ is then given as follows:
\[\begin{tikzcd}[sep=tiny]
	& {\overset{u_{0}}\bullet} && {\overset{u_{1}}\bullet} & {} && {\overset{u_{m - 1}}\bullet} \\
	{\underset{v_{0}}\bullet} && {\underset{v_{1}}\bullet} && {} & {\underset{v_{m - 1}}\bullet} && {\underset{\sum_{i = 0}^{m - 1}(\alpha_{i} v_{i})}{\circ}}
	\arrow[no head, from=1-2, to=2-1]
	\arrow[no head, from=1-2, to=2-3]
	\arrow[no head, from=1-4, to=2-3]
	\arrow[shorten >=18pt, no head, from=1-4, to=2-5]
	\arrow["\ldots"{description, pos=0.4}, draw=none, from=1-5, to=2-5]
	\arrow[no head, from=1-7, to=2-6]
	\arrow[dashed, no head, from=1-7, to=2-8]
	\arrow[shorten >=18pt, no head, from=2-6, to=1-5]
\end{tikzcd}\]
The action of $b$ on $u_{m - 1}$ is more complicated than what we have previously encountered; $u_{m - 1}$ is sent to the $k$-linear combination $\sum_{i = 0}^{m - 1}(\alpha_{i} v_{i})$. This is indicated in the diagram using a dashed south-east branch coming from $u_{m - 1}$ and an `empty' node to signify that $\sum_{i = 0}^{m - 1}(\alpha_{i} v_{i})$ is not a basis element. We have $\dim_{k}(E_{f,n}) = 2m$ and our standard labelling for the $k$-basis is $\left\{u_{i} \vcentcolon 0 \leq i \leq m - 1\right\} \cup \left\{v_{i} \vcentcolon 0 \leq i \leq m - 1\right\}$. When $f=t$, each $\alpha_{i}$ is equal to $0_{k}$ and $m=n$, so the diagram for $E_{t,n}$ takes a particularly nice form:
\[\begin{tikzcd}[sep=tiny]
	& {\overset{u_{0}}\bullet} && {\overset{u_{1}}\bullet} & {} && {\overset{u_{n - 1}}\bullet} \\
	{\underset{v_{0}}\bullet} && {\underset{v_{1}}\bullet} && {} & {\underset{v_{n - 1}}\bullet}
	\arrow[no head, from=1-2, to=2-1]
	\arrow[no head, from=1-2, to=2-3]
	\arrow[no head, from=1-4, to=2-3]
	\arrow[shorten >=18pt, no head, from=1-4, to=2-5]
	\arrow["\ldots"{description, pos=0.4}, draw=none, from=1-5, to=2-5]
	\arrow[no head, from=1-7, to=2-6]
	\arrow[shorten >=18pt, no head, from=2-6, to=1-5]
\end{tikzcd}\]
The case $f = \infty$ describes a `reflected version' of $E_{t,n}$. The diagram for $E_{\infty,n}$ is given by:
\[\begin{tikzcd}[sep=tiny]
	{\overset{u_{0}}\bullet} && {\overset{u_{1}}\bullet} && {} & {\overset{u_{n - 1}}\bullet} \\
	& {\underset{v_{0}}\bullet} && {\underset{v_{1}}\bullet} & {} && {\underset{v_{n - 1}}\bullet}
	\arrow[no head, from=1-1, to=2-2]
	\arrow[no head, from=1-3, to=2-2]
	\arrow[no head, from=1-3, to=2-4]
	\arrow["\ldots"{description, pos=0.4}, draw=none, from=1-5, to=2-5]
	\arrow[shorten >=13pt, no head, from=1-6, to=2-5]
	\arrow[no head, from=1-6, to=2-7]
	\arrow[shorten >=14pt, no head, from=2-4, to=1-5]
\end{tikzcd}\]
We therefore have $\dim_{k}(E_{\infty,n}) = 2n$ with the following standard labelling for the $k$-basis: $\left\{u_{i} \vcentcolon 0 \leq i \leq n - 1\right\} \cup \left\{v_{i} \vcentcolon 0 \leq i \leq n - 1\right\}$. When $n=1$, we set $E_{f} \defeq E_{f,1}$.

\begin{thm}[Ba\v{s}ev; Heller and Reiner]
\label{thm:class}
	The following is a complete and irredundant list of representatives for the isomorphism classes of indecomposable $kV_{4}$-modules:
	
	$kV_{4}$, $k$, $M_{2n+1}$, $W_{2n+1}$, $E_{f,n}$
	
	where $n \geq 1$ and either $f \in k[t]$ is a monic irreducible polynomial or $f = \infty$.
\end{thm}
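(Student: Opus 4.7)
The plan is to reduce the classification to the classical result on Kronecker quiver representations, via two essentially bookkeeping steps. First, using $kV_{4} \cong k[a,b]/\langle a^{2}, b^{2}\rangle$, I treat a module as a $k$-vector space with two commuting square-zero operators $A$, $B$. Since $kV_{4}$ is local, its unique indecomposable projective is $kV_{4}$ itself, so Krull--Schmidt reduces everything to the non-projective indecomposables. I would characterise the no-projective-summand condition as $ab \cdot M = 0$: if $aby \neq 0$ for some $y \in M$, then $\{y, ay, by, aby\}$ is linearly independent (apply $ab$, $b$, $a$ successively to a hypothetical dependence relation), generates a free submodule, and splits off because $kV_{4}$ is self-injective. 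Iterating yields $M \cong (kV_{4})^{\oplus r} \oplus M'$ with $ab \cdot M' = 0$; equivalently $M'$ is a module over the 3-dimensional quotient $kV_{4}/\langle ab\rangle$.

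For such an $M'$, the images of $A$ and $B$ lie inside $V_{0} \defeq \soc(M') = \ker A \cap \ker B$, so the operators factor through induced maps $\bar{A}, \bar{B} \colon V_{1} \to V_{0}$, where $V_{1} \defeq M'/V_{0}$. The key observation is that the assignment $M' \mapsto (V_{1}, V_{0}, \bar{A}, \bar{B})$ gives an equivalence between the category of $kV_{4}/\langle ab\rangle$-modules modulo simple summands and the category of Kronecker quiver representations $\bullet \rightrightarrows \bullet$ modulo simple summands. The kernel condition $\ker \bar{A} \cap \ker \bar{B} = 0$ holds automatically (any element of this intersection lifts into $V_{0}$ and hence projects to zero in $V_{1}$), while $V_{0} = \im \bar{A} + \im \bar{B}$ holds precisely when no trivial summand $k$ is present. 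Under this equivalence, the indecomposables not isomorphic to $kV_{4}$ or $k$ correspond bijectively to indecomposable Kronecker representations with no simple summand.

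I would then invoke the classical classification of indecomposable Kronecker representations, obtained from Kronecker's canonical form for matrix pencils $\lambda \bar{A} + \mu \bar{B}$: apart from the two simples, the indecomposables are the preprojectives of dimension vector $(n, n+1)$, the preinjectives of dimension vector $(n+1, n)$, and a family of regulars of dimension vector $(m, m)$ parametrised by a monic irreducible $f \in k[t]$ with $m = n \deg f$, or by the symbol $\infty$ with $m = n$, together with an integer $n \geq 1$. The final step is to write out each canonical Kronecker form on a basis of $V_{1} \cup V_{0}$, translate back through the equivalence, and match the resulting $kV_{4}$-module against the diagrams of Section 2.B: preprojectives yield the M-family $M_{2n+1}$, preinjectives yield the W-family $W_{2n+1}$, and regulars yield the $E_{f,n}$ family (with $f = \infty$ corresponding to pencils in which $\bar{A}$ rather than $\bar{B}$ is the singular factor).

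The substantive obstacle is Kronecker's pencil theorem itself — this is the only non-elementary ingredient, and it is precisely what makes $kV_{4}$ of tame rather than finite representation type. The remaining work is mechanical: verifying termination of the free-summand extraction, tracking trivial summands contributed by the quotient $V_{0}/(\im \bar{A} + \im \bar{B})$, and carefully translating the canonical forms into the diagrammatic conventions of Section 2.B (in particular making sure the dashed branch in $E_{f,n}$ for $f \neq t, \infty$ correctly encodes the companion-matrix structure of the regular Kronecker representation attached to $f$).
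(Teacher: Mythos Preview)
The paper does not prove this theorem at all: its entire proof reads ``See \cite[Section 4.3]{Ben91} or \cite{CM12}.'' Your proposal, by contrast, outlines a genuine argument, and the route you take --- stripping off free summands using self-injectivity, passing to $kV_{4}/\langle ab\rangle$-modules, and identifying these with Kronecker representations via the socle filtration $V_{0} = \soc(M') \subseteq M'$ --- is precisely the standard proof that the cited references carry out. So you have reconstructed what the paper defers to, and your outline is correct.

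One small point of care: the assignment $M' \mapsto (V_{1}, V_{0}, \bar{A}, \bar{B})$ is not literally an equivalence of categories. A module morphism $M' \to M''$ need not respect a chosen vector-space splitting $M' \cong V_{0} \oplus V_{1}$, so the functor from Kronecker representations to modules is not full. What you actually need, and what your argument provides, is a bijection on isomorphism classes of indecomposables together with the observation that indecomposability is preserved in both directions; this is enough for the classification and follows from the conditions $\ker\bar{A} \cap \ker\bar{B} = 0$ and $V_{0} = \im\bar{A} + \im\bar{B}$ exactly as you describe.
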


\begin{proof}
	See \cite[Section 4.3]{Ben91} or \cite{CM12}.
\end{proof}

\pagebreak
\subsection{Duality}
\label{sec:dual}
\begin{rec}
\label{rec:dual}
	To each $kV_{4}$-module $M$ we associate a \emph{dual module} $M^{*} \defeq \Hom_{k}(M, k)$, where $gf \vcentcolon m \mapsto f(g^{-1}m)$ for $g \in V_{4}$ and $f \in M^{*}$. Moreover, to each morphism of \linebreak $kV_{4}$-modules $\phi \in \Hom_{kV_{4}}(M,N)$, we associate a \emph{dual morphism} $\phi^{*} \in \Hom_{kV_{4}}(N^{*},M^{*})$ where $\phi^{*} \vcentcolon f \mapsto f \circ \phi$. We record some basic facts about duality:
	\begin{enumerate}[label=(\roman*)]
		
		\item $M \cong M^{**}$.
		
		\item $\phi \text{ is surjective} \implies \phi^{*} \text{ is injective}$,\, $\phi \text{ is injective} \implies \phi^{*} \text{ is surjective}$.
		
		\item Permutation modules are self-dual.
		
		\item $M \text{ is indecomposable} \iff M^{*} \text{ is indecomposable}$.
		
	\end{enumerate}
\end{rec}

The following can be found in \cite[Lemma 2.6, Proposition 3.3]{CM12}:

\begin{lem}
\label{lem:dual}
	The even-dimensional indecomposables are self-dual. In the odd-dimensional case, we have $M_{2n+1}^{*} \cong W_{2n+1}$, $W_{2n+1}^{*} \cong M_{2n+1}$ and $k^{*} \cong k$ for each $n \geq 1$.
\end{lem}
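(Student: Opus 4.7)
The plan is to exhibit explicit isomorphisms by computing the action of $V_4$ on the dual of each standard basis.

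The case $k^* \cong k$ is immediate from dimension counting and triviality of the action.

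For the odd-dimensional case, note that every element of $V_{4}$ has order dividing two, so the dual action of $a$ and $b$ satisfies $(a \cdot f)(m) = f(am)$ and $(b \cdot f)(m) = f(bm)$. Applied to the dual basis of $M_{2n+1}$, this annihilates each $u_i^{*}$ and yields $a \cdot v_j^{*} = u_{j+1}^{*}$ and $b \cdot v_j^{*} = u_j^{*}$ (with the boundary conventions $u_0^{*} \defeq u_{n+1}^{*} \defeq 0$). Thus $\{v_j^{*}\}_{j=0}^{n}$ form the $n+1$ top nodes and $\{u_i^{*}\}_{i=1}^{n}$ the $n$ bottom nodes of a W-shaped diagram. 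An explicit $kV_{4}$-isomorphism $W_{2n+1} \xrightarrow{\sim} M_{2n+1}^{*}$ is then given by $u_i \mapsto v_{n-i}^{*}$ and $v_i \mapsto u_{n+1-i}^{*}$, and $W_{2n+1}^{*} \cong M_{2n+1}$ follows from $M^{**} \cong M$ (Recollection \ref{rec:dual}(i)).

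For the even-dimensional case, self-duality of $kV_{4}$ is covered by Recollection \ref{rec:dual}(iii), so it remains to verify $E_{f,n}^{*} \cong E_{f,n}$. I would adopt an operator-theoretic viewpoint: for $f$ monic irreducible, write $E_{f,n} \cong V \oplus V$ with $V = k^{m}$ (where $m = n\deg f$), equipped with the $V_{4}$-action $a(x,y) = (0,x)$ and $b(x,y) = (0,Tx)$, where $T \colon V \to V$ denotes the companion matrix of $f^{n}$ (with $u_i$ corresponding to $(e_i, 0)$ and $v_i$ to $(0, e_i)$). Dualizing and swapping the two summands produces a module of the same shape, but with $T$ replaced by its transpose $T^{\top}$. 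Since every square matrix is conjugate to its transpose over any field, a change of basis on $V$ extends to an explicit isomorphism $E_{f,n}^{*} \cong E_{f,n}$. The case $f = \infty$ is symmetric: one applies the analogous argument after interchanging the roles of $a$ and $b$, reflecting the mirror symmetry between the diagrams for $E_{t,n}$ and $E_{\infty,n}$.

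The main obstacle is the general polynomial case: the nonzero coefficients $\alpha_i$ in the definition of $E_{f,n}$ obstruct a naive index-reversal isomorphism, since the dual action of $b$ on $v_i^{*}$ picks up a correction term $\alpha_i u_{m-1}^{*}$. The operator-theoretic reduction above bypasses this by invoking the conjugacy of $T$ with $T^{\top}$; writing down the resulting explicit change of basis would be tedious but purely computational.
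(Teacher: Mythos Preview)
Your argument is correct. The paper itself does not prove this lemma at all: it merely records the statement with a pointer to \cite[Lemma 2.6, Proposition 3.3]{CM12}. Your proposal therefore goes well beyond what the paper does.

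Your treatment of the odd-dimensional case is clean and checks out: the dual-basis computation for $M_{2n+1}^{*}$ is accurate, and the explicit map $u_i \mapsto v_{n-i}^{*}$, $v_i \mapsto u_{n+1-i}^{*}$ is indeed a $kV_4$-isomorphism $W_{2n+1} \xrightarrow{\sim} M_{2n+1}^{*}$. For the even-dimensional families, the operator-theoretic reduction is the right idea and is correct as stated: writing $a$ and $b$ as block-lower-triangular matrices $\left(\begin{smallmatrix}0&0\\I&0\end{smallmatrix}\right)$ and $\left(\begin{smallmatrix}0&0\\T&0\end{smallmatrix}\right)$, dualising transposes both, swapping the summands restores the shape with $T$ replaced by $T^{\top}$, and then conjugating by $\left(\begin{smallmatrix}S&0\\0&S\end{smallmatrix}\right)$ with $S T S^{-1} = T^{\top}$ recovers the original module while leaving the $a$-block unchanged. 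The fact that every square matrix is similar to its transpose over any field (via rational canonical form) is exactly what is needed, and it absorbs the awkward $\alpha_i$-correction you flagged. The $f=\infty$ case is handled by the same mechanism with $T$ the nilpotent shift, so no separate symmetry argument is strictly required.
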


\subsection{Heller Shifts}
\label{sec:hell}
\begin{rec}
	A surjection of $kV_{4}$-modules $\phi \vcentcolon M \sur{} N$ is called \emph{essential} if $\phi(M') \subsetneq N$ for all proper submodules $M' \subsetneq M$. A \emph{projective cover} of $M$ is an essential surjection $\phi \vcentcolon P \sur{} M$ for which $P$ is projective. We let $\Omega(M) \defeq \ker(\phi)$ and call this the \emph{Heller shift} of $M$; $\Omega(-)$ is known as the \emph{Heller operator}. The existence and uniqueness of projective covers ensures that the Heller operator is well-defined up to isomorphism. Any projective module certainly vanishes under $\Omega(-)$ and $\Omega\left(M \oplus N\right) \cong \Omega(M) \oplus \Omega(N)$.
\end{rec}

We record two useful results found in \cite[Theorem 7.3.1, Proposition 7.3.2]{Web16}:

\begin{prop}
\label{prop:nak}
	Let $M$ be a $kV_{4}$-module. The canonical quotient map $\pi \vcentcolon M \sur{} \hd(M)$, $m \mapsto m + \rad(M)$ is an essential surjection.
\end{prop}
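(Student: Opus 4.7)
The plan is to prove the contrapositive: if $M' \subseteq M$ is a submodule with $\pi(M') = \hd(M)$, then $M' = M$. Unpacking the surjectivity condition, $\pi(M') = M/\rad(M)$ is equivalent to the equality of submodules $M' + \rad(M) = M$, so the task becomes showing that this forces $M' = M$. This is the classical content of Nakayama's lemma, and the goal is to phrase its standard proof in the present setting.

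First, I would invoke the standard identification $\rad(M) = \J(kV_{4}) \cdot M$, so the hypothesis reads $M = M' + \J(kV_{4}) M$. Second, I would observe that $\J(kV_{4}) = \langle a, b \rangle$ is nilpotent: using the relations $a^{2} = b^{2} = (a+b)^{2} = 0_{k}$, one sees immediately that $\J(kV_{4})^{3} = 0$, since any product of three elements from $\{a,b\}$ vanishes.

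The core step is then iterated substitution. Starting from $M = M' + \J(kV_{4}) M$, I would multiply through by $\J(kV_{4})$ to obtain $\J(kV_{4}) M \subseteq M' + \J(kV_{4})^{2} M$, and substituting back gives $M = M' + \J(kV_{4})^{2} M$. Repeating this once more yields $M = M' + \J(kV_{4})^{3} M = M' + 0 = M'$, as required.

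There is no serious obstacle here; the statement is essentially a repackaging of Nakayama's lemma for rings with nilpotent Jacobson radical (which holds trivially for our Artinian local $kV_{4}$). The only conceptual point is recognising that the categorical notion of \emph{essential surjection} applied to the canonical map onto $\hd(M)$ is exactly Nakayama's criterion in disguise; once this translation is made, the argument reduces to the familiar one-line iteration above.
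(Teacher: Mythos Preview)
Your argument is correct: translating essentiality of $\pi$ into the condition $M' + \rad(M) = M \Rightarrow M' = M$, identifying $\rad(M) = \J(kV_{4})M$, and iterating using $\J(kV_{4})^{3} = 0$ is exactly the standard Nakayama-type proof and goes through without issue.

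The paper does not give its own proof of this proposition; it simply records it as a known fact with a reference to \cite[Theorem 7.3.1]{Web16}. So there is nothing to compare against beyond noting that your self-contained argument supplies what the paper outsources to the literature. One small stylistic point: your substitution step silently uses $\J(kV_{4})M' \subseteq M'$ (since $M'$ is a submodule), which is worth making explicit, but this is not a gap.
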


Observe that any homomorphism of $kV_{4}$-modules $\phi \vcentcolon M \to N$ induces a map of heads $\bar{\phi} \vcentcolon \hd(M) \to \hd(N),\, m+\rad(M) \mapsto \phi(m)+\rad(N)$, since $\phi(\rad(M)) \subseteq \rad(N)$.

\begin{prop}
\label{prop:esshd}
	$\phi \text{ is an essential surjection} \iff \bar{\phi} \text{ is an isomorphism}$. Moreover, when $\phi$ (and hence $\bar{\phi}$) is known to be a surjection, the Rank-Nullity Theorem gives:
	\[
	\phi \text{ is essential} \iff \dim_{k}(\hd(M)) = \dim_{k}(\hd(N))
	\]
\end{prop}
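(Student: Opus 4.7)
The plan is to attack both implications by exploiting Nakayama's Lemma, specifically the fact (recorded in Proposition \ref{prop:nak}) that the canonical projection $\pi_M \vcentcolon M \twoheadrightarrow \hd(M)$ is essential, so that $M' + \rad(M) = M$ forces $M' = M$. The key commutative square $\pi_N \circ \phi = \bar{\phi} \circ \pi_M$ lets one translate statements about $\phi$ into statements about $\bar{\phi}$ and vice versa.

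For the forward implication, I would assume $\phi$ is an essential surjection. Surjectivity of $\bar{\phi}$ then follows immediately from surjectivity of $\phi$, so only injectivity requires work. Here my strategy is to suppose $K \defeq \ker(\bar{\phi}) \neq 0$ and seek a contradiction. Since $\hd(M)$ is semisimple, one can pick a complement $L$ with $\hd(M) = K \oplus L$, so that $\bar{\phi}$ restricted to $L$ is an isomorphism onto $\hd(N)$. Setting $M_L \defeq \pi_M^{-1}(L)$, we would have $M_L \subsetneq M$ (as $L \subsetneq \hd(M)$), and a short diagram chase using surjectivity of $\bar{\phi}$ onto $\hd(N)$ through $L$ yields $\phi(M_L) + \rad(N) = N$. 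Applying Nakayama to $N$ then upgrades this to $\phi(M_L) = N$, contradicting essentialness of $\phi$.

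For the converse, assume $\bar{\phi}$ is an isomorphism. Surjectivity of $\bar{\phi}$ gives $\phi(M) + \rad(N) = N$, and Nakayama promotes this to $\phi(M) = N$. To see that $\phi$ is essential, suppose $M' \subsetneq M$ satisfies $\phi(M') = N$; chasing the square shows $\bar{\phi}(\pi_M(M')) = \hd(N)$, so injectivity of $\bar{\phi}$ forces $\pi_M(M') = \hd(M)$, i.e.\ $M' + \rad(M) = M$. A final invocation of Nakayama yields $M' = M$, contradicting $M' \subsetneq M$.

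For the \emph{moreover} clause, once $\phi$ is known surjective so is $\bar{\phi}$, and Rank-Nullity gives $\dim_k \hd(M) = \dim_k \ker(\bar{\phi}) + \dim_k \hd(N)$. Thus $\bar{\phi}$ is an isomorphism iff $\ker(\bar{\phi}) = 0$ iff $\dim_k \hd(M) = \dim_k \hd(N)$. The only genuine subtlety throughout is the repeated appeal to Nakayama to upgrade ``surjectivity modulo $\rad$'' to genuine surjectivity; beyond that it is all bookkeeping in the commutative square.
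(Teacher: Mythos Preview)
Your proof is correct and follows the standard route via Nakayama's Lemma and the commutative square $\pi_N \circ \phi = \bar{\phi} \circ \pi_M$. Note, however, that the paper does not actually supply a proof of this proposition: it is introduced with the sentence ``We record two useful results found in \cite[Theorem 7.3.1, Proposition 7.3.2]{Web16}'' and then stated without proof. So there is nothing in the paper to compare against beyond the citation; your argument is essentially what one finds in Webb's book, and it is sound.
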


Heller shifts for the indecomposables are given in \cite[Lemma 2.6, Proposition 3.1]{CM12}:

\begin{lem}
	The non-projective even-dimensional indecomposables are fixed under the Heller operator. The odd-dimensional case is described by the following picture:
	\[\begin{tikzcd}[cramped,sep=scriptsize]
		\cdots & {M_{5}} & {M_{3}} & k & {W_{3}} & {W_{5}} & \cdots
		\arrow["\Omega", maps to, from=1-1, to=1-2]
		\arrow["\Omega", maps to, from=1-2, to=1-3]
		\arrow["\Omega", maps to, from=1-3, to=1-4]
		\arrow["\Omega", maps to, from=1-4, to=1-5]
		\arrow["\Omega", maps to, from=1-5, to=1-6]
		\arrow["\Omega", maps to, from=1-6, to=1-7]
	\end{tikzcd}\]
	Explicitly, $\Omega(M_{3}) \cong k$,\, $\Omega(k) \cong W_{3}$,\, $\Omega(W_{3}) \cong W_{5}$ and for each $n \geq 2$, we have $\Omega(M_{2n+1}) \cong M_{2n-1}$ and $\Omega(W_{2n+1}) \cong W_{2n+3}$. For completeness, we note that $\Omega(kV_{4}) \cong 0$.
\end{lem}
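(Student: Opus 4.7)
The plan is to use Propositions \ref{prop:nak} and \ref{prop:esshd} to reduce the computation of each Heller shift to constructing an explicit projective cover and identifying its kernel. Since $kV_{4}$ is local with unique simple module $k$, the projective cover of an indecomposable $M$ takes the form
\[
\pi \vcentcolon (kV_{4})^{d} \twoheadrightarrow M, \qquad d \defeq \dim_{k} \hd(M),
\]
and any surjection sending the standard generators of $(kV_{4})^{d}$ to the top nodes of $M$ is essential by Proposition \ref{prop:esshd}, since the top nodes span $\hd(M)$. A glance at the diagrams reveals $d$ in each case: $n$ for $M_{2n+1}$, $n+1$ for $W_{2n+1}$, $m = n\deg(f)$ for $E_{f,n}$ with $f \neq \infty$, and $n$ for $E_{\infty,n}$.

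I would then read off an explicit basis of $\ker(\pi)$ from the diagram of $M$. The candidates are: the socle elements $ab e_{i}$; the relations $b e_{i} + a e_{j}$ (using $\Char(k) = 2$) for every bottom node hit by two adjacent top nodes $u_{i}, u_{j}$; and the boundary elements $a e_{0}$ or $b e_{n}$ arising from missing branches at the extreme top nodes. A straightforward count against $\dim_{k} \ker(\pi) = 4d - \dim_{k} M$ confirms these span the kernel. Computing the action of $a, b$ on this basis and matching against the classification yields, for instance, $\Omega(M_{3}) \cong \langle ab e_{1}\rangle \cong k$, and $\Omega(k) \cong \J(kV_{4})$, which one sees is $W_{3}$; for $n \geq 2$, the kernel of the cover of $M_{2n+1}$ has $n - 1$ top and $n$ bottom nodes in M-shape, giving $M_{2n-1}$, with the analogous picture in W-shape producing $W_{2n+3}$ from $W_{2n+1}$.

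The main obstacle is the general $E_{f,n}$ with $f$ finite, where $bu_{m-1} = \sum_{i=0}^{m-1} \alpha_{i} v_{i}$ forces the rightmost kernel generator to be $b e_{m-1} + \sum_{i=0}^{m-1} \alpha_{i}\, a e_{i}$ rather than an adjacent combination $b e_{m-1} + a e_{m}$. The delicate observation is that applying $b$ to this element yields exactly $\sum_{i=0}^{m-1} \alpha_{i}\, ab e_{i}$, which is precisely the dashed-arrow relation appearing in $E_{f,n}$ itself; this is what pins down $\Omega(E_{f,n}) \cong E_{f,n}$ and explains the period-one behaviour. The case $f = \infty$ is handled by the symmetric calculation with $a e_{0}$ playing the role of the boundary generator, and $\Omega(kV_{4}) \cong 0$ is immediate from projectivity.
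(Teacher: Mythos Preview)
Your proposal is correct. The explicit construction of projective covers $(kV_{4})^{d} \twoheadrightarrow M$ with $d = \dim_{k}\hd(M)$, followed by reading off kernel bases from the diagrams, works exactly as you describe; the dimension counts and the identification of the kernel diagrams with $M_{2n-1}$, $W_{2n+3}$, and $E_{f,n}$ are all right. The handling of the $E_{f,n}$ case is the only subtle point, and you have identified it correctly: the element $be_{m-1} + \sum_{i}\alpha_{i}\,ae_{i}$ lies in the kernel, $a$ sends it to $abe_{m-1}$, and $b$ sends it to $\sum_{i}\alpha_{i}\,abe_{i}$, reproducing precisely the dashed arrow of $E_{f,n}$.

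The paper itself does not prove this lemma; it merely cites \cite[Lemma 2.6, Proposition 3.1]{CM12}. Your argument therefore supplies a self-contained proof where the paper defers to the literature. The only cosmetic point is that your sketch speaks of ``top nodes'' spanning $\hd(M)$, which you might make precise by noting that $\rad(M)$ is exactly the span of the bottom nodes in each diagram, so the images of the top nodes form a $k$-basis of $\hd(M)$ and Proposition~\ref{prop:esshd} applies as claimed.
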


\subsection{Dimension Zero}
\label{sec:dim0}
\mbox{}\\
We now identify the indecomposables with permutation dimension equal to zero, which are of course nothing but the indecomposable permutation modules. We will find these to be $k$, $E_{t}$, $E_{t+1_{k}}$, $E_{\infty}$ and $kV_{4}$, but first we review their diagrams:

\begin{nota}
	The current diagrams for $k$, $E_{t}$, $E_{t+1_{k}}$, $E_{\infty}$ and $kV_{4}$ are:
	%
	%
	%
	%
	%
	\[\begin{tikzcd}[cramped,sep=tiny]
		{\overset{1_{k}}\bullet}
	\end{tikzcd}
	\hspace{5mm}
	\begin{tikzcd}[cramped,sep=tiny]
		& {\overset{u_{0}}\bullet} \\
		{\underset{v_{0}}\bullet}
		\arrow[no head, from=1-2, to=2-1]
	\end{tikzcd}
	\hspace{5mm}
	\begin{tikzcd}[cramped,sep=tiny]
		& {\overset{u_{0}}\bullet} \\
		{\underset{v_{0}}\bullet} && {\underset{v_0}\circ}
		\arrow[no head, from=1-2, to=2-1]
		\arrow[dashed, no head, from=1-2, to=2-3]
	\end{tikzcd}
	\hspace{5mm}
	\begin{tikzcd}[cramped,sep=tiny]
		{\overset{u_{0}}\bullet} \\
		& {\underset{v_{0}}\bullet}
		\arrow[no head, from=1-1, to=2-2]
	\end{tikzcd}
	\hspace{5mm}
	\begin{tikzcd}[cramped,sep=tiny]
		& {\overset{1_{k}}{\bullet}} \\
		{\overset{a}{\bullet}} && {\overset{b}{\bullet}} \\
		& {\overset{ab}{\bullet}}
		\arrow[no head, from=1-2, to=2-1]
		\arrow[no head, from=1-2, to=2-3]
		\arrow[no head, from=2-1, to=3-2]
		\arrow[no head, from=2-3, to=3-2]
	\end{tikzcd}\]
	We introduce a new standard labelling for the bases and improve the diagram for $E_{t+1_{k}}$:
	%
	%
	%
	%
	%
	\[\begin{tikzcd}[cramped,sep=tiny]
		{\overset{\theta}\bullet}
	\end{tikzcd}
	\hspace{5mm}
	\begin{tikzcd}[cramped,sep=tiny]
		& {\overset{w}\bullet} \\
		{\underset{x}\bullet}
		\arrow[no head, from=1-2, to=2-1]
	\end{tikzcd}
	\hspace{5mm}
	\begin{tikzcd}[cramped,sep=small]
		{\overset{w}\bullet} \\
		{\underset{x}\bullet}
		\arrow[curve={height=10pt}, no head, from=1-1, to=2-1]
		\arrow[curve={height=-10pt}, no head, from=1-1, to=2-1]
	\end{tikzcd}
	\hspace{5mm}
	\begin{tikzcd}[cramped,sep=tiny]
		{\overset{w}\bullet} \\
		& {\underset{y}\bullet}
		\arrow[no head, from=1-1, to=2-2]
	\end{tikzcd}
	\hspace{5mm}
	\begin{tikzcd}[cramped,sep=tiny]
		& {\overset{w}{\bullet}} \\
		{\overset{x}{\bullet}} && {\overset{y}{\bullet}} \\
		& {\overset{z}{\bullet}}
		\arrow[no head, from=1-2, to=2-1]
		\arrow[no head, from=1-2, to=2-3]
		\arrow[no head, from=2-1, to=3-2]
		\arrow[no head, from=2-3, to=3-2]
	\end{tikzcd}\]
\end{nota}

\begin{thm}
\label{thm:dim0}
	The indecomposable permutation modules are:
	\[
	k \,, \; E_{t} \,, \; E_{t+1_{k}} \,, \; E_{\infty} \,, \; kV_{4}
	\]
\end{thm}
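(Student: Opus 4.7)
My plan is to use the standard correspondence between permutation $kV_{4}$-modules and $V_{4}$-sets. A permutation module is by definition of the form $kX$ for some $V_{4}$-set $X$, and the orbit decomposition $X = \bigsqcup_{i} V_{4} \cdot x_{i}$ induces a $kV_{4}$-module decomposition $kX = \bigoplus_{i} k[V_{4}/H_{i}]$, where $H_{i} \vcentcolon= \mathrm{Stab}(x_{i})$. Hence every indecomposable permutation module is isomorphic to some $k[V_{4}/H]$, and conversely each $k[V_{4}/H]$ that is indecomposable contributes one class. So the task reduces to computing $k[V_{4}/H]$ for each subgroup $H \leq V_{4}$ and identifying which indecomposable of Theorem \ref{thm:class} it represents.

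The abelian group $V_{4}$ has exactly five subgroups---the trivial one, the three order-two subgroups $\langle \sigma \rangle$, $\langle \tau \rangle$, $\langle \sigma\tau \rangle$, and $V_{4}$ itself---so there are at most five candidates. The extreme cases are immediate: $k[V_{4}/V_{4}] \cong k$ is the trivial module, and $k[V_{4}/\{e\}] \cong kV_{4}$ is the regular module. For each of the three index-two subgroups, $k[V_{4}/H]$ is two-dimensional, and I would compute the action of $a = \sigma + e$ and $b = \tau + e$ on a suitable basis of the coset space. The key observation is that $\sigma$ fixes the cosets of $\langle \sigma \rangle$ while $\tau$ swaps them, forcing $a$ to act as zero and $b$ to act nilpotently---this gives the diagram of $E_{\infty}$. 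The symmetric analysis for $\langle \tau \rangle$ gives $E_{t}$. The diagonal subgroup $\langle \sigma\tau \rangle$ is the distinctive case: since $\sigma \equiv \tau \pmod{\langle \sigma\tau \rangle}$, both $\sigma$ and $\tau$ swap the cosets, so $a$ and $b$ act identically, yielding the doubled-edge diagram of $E_{t+1_{k}}$.

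Each of the five modules has thereby been realised as some $k[V_{4}/H]$ and, via explicit matching with a named indecomposable from Theorem \ref{thm:class}, shown to be indecomposable. Conversely no other indecomposable can arise, since every permutation module decomposes into summands of this form. The only real obstacle is bookkeeping for the three 2-dimensional cases---in particular, correctly distinguishing $E_{t}$ from $E_{\infty}$ (which differ only in whether $a$ or $b$ is the surviving generator) and recognising that it is precisely the diagonal subgroup $\langle \sigma\tau \rangle$ that produces the symmetric $E_{t+1_{k}}$.
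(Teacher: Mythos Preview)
Your proposal is correct and follows essentially the same route as the paper: reduce to transitive permutation modules $k[V_{4}/H]$ via the orbit decomposition, enumerate the five subgroups of $V_{4}$, and identify each resulting module with one of the named indecomposables. Your assignments $\langle\tau\rangle\mapsto E_{t}$, $\langle\sigma\tau\rangle\mapsto E_{t+1_{k}}$, $\langle\sigma\rangle\mapsto E_{\infty}$ agree with the paper's, and your justification for indecomposability (matching against the classification of Theorem~\ref{thm:class}) is the same as the paper's implicit reasoning.
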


\begin{proof}
	For ease of notation, we let $G \defeq V_{4}$ in this proof.
	
	An indecomposable permutation module $P$ admits a $G$-invariant $k$-basis $X \subseteq P$, which is naturally a transitive $G$-set. Indeed, a decomposition of $X$ in to $G$-orbits $\coprod_{i = 1}^{n}\left(X_{i}\right)$ gives a decomposition $P = k[X] = \bigoplus_{i = 1}^{n}\left(k\left[X_{i}\right]\right)$, hence $n=1$. Fix $x \in X$ with stabiliser subgroup $G_{x} \leq G$. Note that $x$ generates $P$ as a $kG$-module. Consider the morphism $\phi \vcentcolon P \to k\left[G/G_{x}\right],\, x \mapsto eG_{x}$. It is not hard to see that $\phi$ is a well-defined surjection, from which it follows that $\phi$ is an isomorphism by invoking both the Orbit-Stabiliser Theorem and the Rank-Nullity Theorem. The group $G \defeq V_{4}$ admits five subgroups, namely $\langle \sigma, \tau \rangle$, $\langle \tau \rangle$, $\langle \sigma\tau \rangle$, $\langle \sigma \rangle$ and $\langle e \rangle$, which give rise to five possible indecomposable permutation modules $k\left[G/\langle \sigma, \tau \rangle\right]$, $k\left[G/\langle \tau \rangle\right]$, $k\left[G/\langle \sigma\tau \rangle\right]$, $k\left[G/\langle \sigma \rangle\right]$ and $k\left[G/\langle e \rangle\right]$. It remains to observe that these correspond to $k$, $E_{t}$, $E_{t+1_{k}}$, $E_{\infty}$ and $kV_{4}$ respectively. This is a straightforward check, which may be facilitated by contemplating the following diagrams:
	%
	%
	%
	\[\begin{tikzcd}[cramped,sep=small]
		& {\overset{e \langle \tau \rangle}\bullet} \\
		{\underset{(\sigma + e)\langle \tau \rangle}\bullet}
		\arrow[no head, from=1-2, to=2-1]
	\end{tikzcd}
	\hspace{3mm}
	\begin{tikzcd}[cramped,sep=scriptsize]
		{\overset{e \langle \sigma\tau \rangle}\bullet} \\
		{\underset{(\sigma + e)\langle \sigma\tau \rangle}\bullet}
		\arrow[curve={height=12pt}, no head, from=1-1, to=2-1]
		\arrow[curve={height=-12pt}, no head, from=1-1, to=2-1]
	\end{tikzcd}
	\hspace{3mm}
	\begin{tikzcd}[cramped,sep=small]
		{\overset{e \langle \sigma \rangle}\bullet} \\
		& {\underset{(\tau + e)\langle \sigma \rangle}\bullet}
		\arrow[no head, from=1-1, to=2-2]
	\end{tikzcd}\]
\end{proof}

\section{Resolutions}
\label{sec:resol}
We now exhibit permutation resolutions for each of the indecomposable $kV_{4}$-modules, which we will eventually find to be minimal. Establishing minimality for resolutions of length two or more is not easy in general; this is the subject of Section \ref{sec:minim}.

\pagebreak
\subsection{Dimension One}
\label{sec:dim1}
\mbox{}\\
Any length one permutation resolution of a non-permutation module is automatically minimal. We will now identify eight indecomposables with permutation dimension one by finding such resolutions. As we will see later, these eight modules are in fact \emph{all} of the permutation dimension one indecomposables. For a $kV_{4}$-module $M$, a length one permutation resolution is nothing but a short exact sequence of the form:
\[\begin{tikzcd}[cramped,sep=scriptsize]
	{\ker(\phi)} & P & M
	\arrow[hook, from=1-1, to=1-2]
	\arrow["\phi", two heads, from=1-2, to=1-3]
\end{tikzcd}\]
such that both $P$ and $\ker(\phi)$ are permutation modules. Note that any permutation module decomposes as a direct sum of indecomposable permutation modules identified in Theorem \ref{thm:dim0}. Each of these is principally generated, so the map $\phi$ is entirely specified by the images of the generators for each indecomposable summand of $P$.

Of course in general, we cannot map these generators arbitrarily; their images must satisfy certain relations. For example, a map out of the trivial module $\phi \vcentcolon k \to M,\, \theta \mapsto \phi(\theta)$ is well-defined if and only if  $a\phi(\theta) = b\phi(\theta) = 0_{M}$. On the other hand, the regular module $kV_{4}$ is free, so any choice for the image of the generator uniquely specifies a well-defined map of $kV_{4}$-modules. The conditions on maps out of $E_{t}$, $E_{t+1_{k}}$ and $E_{\infty}$ are as follows:

\begin{enumerate}[label=(\roman*)]
	\item The map $\phi \vcentcolon E_{t} \to M,\, w \mapsto \phi(w)$ is well-defined if and only if $b\phi(w) = 0_{M}$.
	\item The map $\phi \vcentcolon E_{t+1_{k}} \to M,\, w \mapsto \phi(w)$ is well-defined if and only if $(a + b)\phi(w) = 0_{M}$.
	\item The map $\phi \vcentcolon E_{\infty} \to M,\, w \mapsto \phi(w)$ is well-defined if and only if $a\phi(w) = 0_{M}$.
\end{enumerate}

We will verify that $\ker(\phi)$ is a permutation module by finding a $k$-basis on which the action of $kV_{4}$ is described by a diagram corresponding to some permutation module. This encodes an isomorphism between $\ker(\phi)$ and the permutation module. It is also useful to note that the diagram for a direct sum of indecomposables simply consists of the individual diagrams for each summand drawn next to each other.

\begin{thm}
\label{thm:dim1}
	The indecomposables with permutation dimension equal to one are:
	\[
	M_{3} \,, \; M_{5} \,, \; M_{7} \,, \; W_{3} \,, \; W_{5} \,, \; E_{t,2} \,, \; E_{(t+1_{k}),2} \,, \; E_{\infty,2}
	\]
	Their diagrams are as follows:
	%
	%
	%
	\[\begin{tikzcd}[cramped,sep=tiny]
		& {\overset{u_{1}}\bullet} \\
		{\underset{v_{0}}\bullet} && {\underset{v_{1}}\bullet}
		\arrow[no head, from=1-2, to=2-1]
		\arrow[no head, from=1-2, to=2-3]
	\end{tikzcd}
	\hspace{10mm}
	\begin{tikzcd}[cramped,sep=tiny]
		& {\overset{u_{1}}\bullet} && {\overset{u_{2}}\bullet} \\
		{\underset{v_{0}}\bullet} && {\underset{v_{1}}\bullet} && {\underset{v_{2}}\bullet}
		\arrow[no head, from=1-2, to=2-1]
		\arrow[no head, from=1-2, to=2-3]
		\arrow[no head, from=1-4, to=2-3]
		\arrow[no head, from=1-4, to=2-5]
	\end{tikzcd}
	\hspace{10mm}
	\begin{tikzcd}[cramped,sep=tiny]
		& {\overset{u_{1}}\bullet} && {\overset{u_{2}}\bullet} && {\overset{u_{3}}\bullet} \\
		{\underset{v_{0}}\bullet} && {\underset{v_{1}}\bullet} && {\underset{v_{2}}\bullet} && {\underset{v_{3}}\bullet}
		\arrow[no head, from=1-2, to=2-1]
		\arrow[no head, from=1-2, to=2-3]
		\arrow[no head, from=1-4, to=2-3]
		\arrow[no head, from=1-4, to=2-5]
		\arrow[no head, from=1-6, to=2-5]
		\arrow[no head, from=1-6, to=2-7]
	\end{tikzcd}\]
	
	%
	%
	\[\begin{tikzcd}[cramped,sep=tiny]
		{\overset{u_{0}}\bullet} && {\overset{u_{1}}\bullet} \\
		& {\underset{v_{1}}\bullet}
		\arrow[no head, from=1-1, to=2-2]
		\arrow[no head, from=1-3, to=2-2]
	\end{tikzcd}
	\hspace{10mm}
	\begin{tikzcd}[cramped,sep=tiny]
		{\overset{u_{0}}\bullet} && {\overset{u_{1}}\bullet} && {\overset{u_{2}}\bullet} \\
		& {\underset{v_{1}}\bullet} && {\underset{v_{2}}\bullet}
		\arrow[no head, from=1-1, to=2-2]
		\arrow[no head, from=1-3, to=2-2]
		\arrow[no head, from=1-3, to=2-4]
		\arrow[no head, from=2-4, to=1-5]
	\end{tikzcd}\]
	
	%
	%
	%
	\[\begin{tikzcd}[cramped,sep=tiny]
		& {\overset{u_{0}}\bullet} && {\overset{u_{1}}\bullet} \\
		{\underset{v_{0}}\bullet} && {\underset{v_{1}}\bullet}
		\arrow[no head, from=1-2, to=2-1]
		\arrow[no head, from=1-2, to=2-3]
		\arrow[no head, from=1-4, to=2-3]
	\end{tikzcd}
	\hspace{10mm}
	\begin{tikzcd}[cramped,sep=tiny]
		& {\overset{u_{0}}\bullet} && {\overset{u_{1}}\bullet} \\
		{\underset{v_{0}}\bullet} && {\underset{v_{1}}\bullet} && {\underset{v_{0}}\circ}
		\arrow[no head, from=1-2, to=2-1]
		\arrow[no head, from=1-2, to=2-3]
		\arrow[no head, from=1-4, to=2-3]
		\arrow[dashed, no head, from=1-4, to=2-5]
	\end{tikzcd}
	\hspace{10mm}
	\begin{tikzcd}[cramped,sep=tiny]
		{\overset{u_{0}}\bullet} && {\overset{u_{1}}\bullet} \\
		& {\underset{v_{0}}\bullet} && {\underset{v_{1}}\bullet}
		\arrow[no head, from=1-1, to=2-2]
		\arrow[no head, from=1-3, to=2-2]
		\arrow[no head, from=1-3, to=2-4]
	\end{tikzcd}\]
\end{thm}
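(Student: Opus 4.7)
The plan is to establish $\ppdim(M) = 1$ for each of the eight listed modules by exhibiting an explicit short exact sequence
$0 \to K \to P \to M \to 0$
with $P$ and $K$ both permutation modules. Since each such $M$ is not itself a permutation module (by Theorem~\ref{thm:dim0}), we have $\ppdim(M) \geq 1$; combined with $\ppdim(M) \leq 1$ from such a resolution, this gives $\ppdim(M) = 1$, and the resolution is automatically minimal, as noted at the opening of this section.

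The surjection $\phi \colon P \to M$ is constructed by examining the ``top'' nodes of $M$. A top node $u$ with both $au \neq 0$ and $bu \neq 0$ can only be hit by the generator of a $kV_{4}$-summand; tops annihilated by $b$ (respectively by $a$, by $a + b$) can be hit by an $E_{t}$-generator (respectively an $E_{\infty}$-generator, an $E_{t+1_{k}}$-generator). Once $\phi$ is specified, one identifies the kernel by locating linear combinations of basis elements whose images cancel, and reads off its structure from the induced actions of $a$ and $b$, matching the diagram of a permutation module.

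For the ``direct'' cases this procedure yields short resolutions with small kernels, namely $0 \to k \to kV_{4} \to M_{3} \to 0$ (via $w \mapsto u_{1}$), $0 \to k \to E_{\infty} \oplus E_{t} \to W_{3} \to 0$, and $0 \to k \to E_{\infty} \oplus E_{t+1_{k}} \oplus E_{t} \to W_{5} \to 0$ (with the $E_{t+1_{k}}$-generator mapping to $u_{0} + u_{1} + u_{2}$, which lies in $\ker(a+b)$). The even-dimensional cases $E_{f,2}$ for $f \in \{t, t + 1_{k}, \infty\}$ admit analogous resolutions $0 \to E_{f} \to kV_{4} \oplus E_{f} \to E_{f,2} \to 0$ obtained by letting a $kV_{4}$-generator hit the ``full'' $kV_{4}$-type top and an $E_{f}$-generator hit the other.

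The main obstacle is $M_{5}$ and $M_{7}$, whose tops are all of $kV_{4}$-type and whose projective covers $(kV_{4})^{n} \to M_{2n+1}$ have kernel $M_{2n-1}$, which is not permutation. I would circumvent this by adding a single $k$-summand to $P$: explicitly,
$0 \to E_{t} \oplus E_{\infty} \to (kV_{4})^{2} \oplus k \to M_{5} \to 0$
with $w_{1} \mapsto u_{1}$, $w_{2} \mapsto u_{2}$, $\theta \mapsto v_{1}$, and
$0 \to E_{\infty} \oplus E_{t+1_{k}} \oplus E_{t} \to (kV_{4})^{3} \oplus k \to M_{7} \to 0$
with $w_{i} \mapsto u_{i}$ for $i \in \{1, 2, 3\}$ and $\theta \mapsto v_{1} + v_{2}$. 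These resolutions can be motivated systematically via a pushout combining the projective cover of $M_{2n+1}$ with an embedding of $M_{2n-1}$ into a permutation module with cokernel $k$; such embeddings are obtained as the $k$-linear duals of the $W_{2n-1}$ resolutions above, using that $M_{2n-1}^{*} \cong W_{2n-1}$ (Lemma~\ref{lem:dual}) and that permutation modules are self-dual. The remaining verification is direct computation: $\phi$ is well-defined and surjective by inspection, and the proposed kernel bases transform under $a$ and $b$ in exact accordance with the diagrams of the claimed permutation summands.
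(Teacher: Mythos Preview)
Your proposal is correct and matches the paper's proof essentially line for line: the eight short exact sequences you list are exactly the ones the paper constructs, with the same maps $\phi$ and the same identification of kernels. Your duality/pushout remark explaining how the $M_{5}$ and $M_{7}$ resolutions arise from the $W_{3}$ and $W_{5}$ resolutions is a pleasant conceptual addition not present in the paper, but the underlying argument is the same.
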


\begin{proof}
	For now, we will only prove that each of these has permutation dimension equal to one. In Section \ref{sec:minim} we will see that this list is in fact complete.
	\begin{enumerate}[label=(\roman*), labelindent=0pt, labelwidth=!, wide]
		\item There is a short exact sequence:
		$\begin{tikzcd}[cramped,sep=small]
			0 & k & {kV_{4}} & {M_{3}} & 0
			\arrow[from=1-1, to=1-2]
			\arrow[from=1-2, to=1-3]
			\arrow[from=1-3, to=1-4]
			\arrow[from=1-4, to=1-5]
		\end{tikzcd}$
		\newline
		This is described explicitly as follows:
		%
		%
		%
		\[\begin{array}{ccccc}
			\begin{tikzcd}[cramped,sep=tiny]
				{\overset{z}\bullet}
			\end{tikzcd}
			&
			\inj{}
			&
			\begin{tikzcd}[cramped,sep=tiny]	
				& {\overset{w}{\bullet}} \\
				{\overset{x}{\bullet}} && {\overset{y}{\bullet}} \\
				& {\overset{z}{\bullet}}
				\arrow[no head, from=1-2, to=2-1]
				\arrow[no head, from=1-2, to=2-3]
				\arrow[no head, from=2-1, to=3-2]
				\arrow[no head, from=2-3, to=3-2]
			\end{tikzcd}
			&
			\sur{\phi}
			&
			\begin{tikzcd}[cramped,sep=tiny]
				& {\overset{u_{1}}\bullet} \\
				{\underset{v_{0}}\bullet} && {\underset{v_{1}}\bullet}
				\arrow[no head, from=1-2, to=2-1]
				\arrow[no head, from=1-2, to=2-3]
			\end{tikzcd}
		\end{array}\]
		where $\phi \vcentcolon w \mapsto u_{1}$.
		\newline
		We have $\phi(x) = au_{1} = v_{0}$, $\phi(y) = bu_{1} = v_{1}$ and $\phi(z) = abu_{1} = 0_{M_{3}}$. This is a surjection since $u_{1}$ generates $M_{3}$ and $\phi$ is well-defined because $kV_{4}$ is free. $\{z\}$ is a $k$-basis for $\ker(\phi)$ since $\phi(z) = 0_{M_{3}}$ and $\dim_{k}\left(\ker(\phi)\right) = 4 - 3 = 1$. We have $az = bz = 0_{kV_{4}}$, so $\ker(\phi) \cong k$. $kV_{4}$ and $k$ are indeed permutation modules, therefore $\ppdim\left(M_{3}\right) = 1$.

		\item There is a short exact sequence:
		$\begin{tikzcd}[cramped,sep=small]
			0 & {E_{t} \oplus E_{\infty}} & {kV_{4} \oplus k \oplus kV_{4}} & {M_{5}} & 0
			\arrow[from=1-1, to=1-2]
			\arrow[from=1-2, to=1-3]
			\arrow[from=1-3, to=1-4]
			\arrow[from=1-4, to=1-5]
		\end{tikzcd}$
		\newline
		This is described explicitly as follows:
		\[\begin{array}{ccccc}
			\begin{tikzcd}[cramped,sep=tiny]
				& {\overset{y_{1}+\theta}\bullet} \\
				{\underset{z_{1}}\bullet}
				\arrow[no head, from=1-2, to=2-1]
			\end{tikzcd}
			\hspace{3mm}
			\begin{tikzcd}[cramped,sep=tiny]
				{\overset{\theta+x_{2}}\bullet} \\
				& {\underset{z_{2}}\bullet}
				\arrow[no head, from=1-1, to=2-2]
			\end{tikzcd} 
			&
			\inj{}
			&
			\begin{tikzcd}[cramped,sep=tiny]
				& {\overset{w_{1}}{\bullet}} \\
				{\overset{x_{1}}{\bullet}} && {\overset{y_{1}}{\bullet}} \\
				& {\overset{z_{1}}{\bullet}}
				\arrow[no head, from=1-2, to=2-1]
				\arrow[no head, from=1-2, to=2-3]
				\arrow[no head, from=2-1, to=3-2]
				\arrow[no head, from=2-3, to=3-2]
			\end{tikzcd}
			\hspace{2mm}
			\begin{tikzcd}[cramped,sep=tiny]
				{\overset{\theta}\bullet}
			\end{tikzcd}
			\hspace{2mm}
			\begin{tikzcd}[cramped,sep=tiny]
				& {\overset{w_{2}}{\bullet}} \\
				{\overset{x_{2}}{\bullet}} && {\overset{y_{2}}{\bullet}} \\
				& {\overset{z_{2}}{\bullet}}
				\arrow[no head, from=1-2, to=2-1]
				\arrow[no head, from=1-2, to=2-3]
				\arrow[no head, from=2-1, to=3-2]
				\arrow[no head, from=2-3, to=3-2]
			\end{tikzcd}
			&
			\sur{\phi}
			&
			\begin{tikzcd}[cramped,sep=tiny]
				& {\overset{u_{1}}\bullet} && {\overset{u_{2}}\bullet} \\
				{\underset{v_{0}}\bullet} && {\underset{v_{1}}\bullet} && {\underset{v_{2}}\bullet}
				\arrow[no head, from=1-2, to=2-1]
				\arrow[no head, from=1-2, to=2-3]
				\arrow[no head, from=1-4, to=2-3]
				\arrow[no head, from=1-4, to=2-5]
			\end{tikzcd}
		\end{array}\]
		where $\phi \vcentcolon w_{i} \mapsto u_{i},\, \theta \mapsto v_{1}$.
		\newline
		This is a surjection since $u_{1}$ and $u_{2}$ generate $M_{5}$. The map $\phi$ is well-defined because $kV_{4}$ is free and $av_{1} = bv_{1} = 0_{M_{5}}$. Note that $z_{1}, z_{2} \in \ker(\phi)$ and $\phi(y_{1}+\theta) = \phi(\theta+x_{2}) = v_{1} + v_{1} = 0_{M_{5}}$. The $k$-linearly independent set $\{z_{1},\, z_{2},\, y_{1}+\theta,\, \theta+x_{2}\}$ is a $k$-basis for $\ker(\phi)$ since $\dim_{k}(\ker(\phi)) = 9 - 5 = 4$. Each basis element is annihilated by $a$, besides $y_{1}+\theta$ which is sent to $z_{1}$. Similarly, each basis element is annihilated by $b$, besides $\theta+x_{2}$ which is sent to $z_{2}$. We therefore have $\ker(\phi) \cong E_{t} \oplus E_{\infty}$, so $\ppdim\left(M_{5}\right) = 1$.

		\item There is a short exact sequence:
		$\begin{tikzcd}[cramped,sep=small]
			0 & {E_{t} \oplus E_{t+1_{k}} \oplus E_{\infty}} & {k \oplus kV_{4}^{\oplus3}} & {M_{7}} & 0
			\arrow[from=1-1, to=1-2]
			\arrow[from=1-2, to=1-3]
			\arrow[from=1-3, to=1-4]
			\arrow[from=1-4, to=1-5]
		\end{tikzcd}$
		This is described explicitly as follows:
		\[\begin{array}{ccc}
			\begin{tikzcd}[cramped,sep=tiny]
				{\overset{\theta}\bullet}
			\end{tikzcd}
			\hspace{3mm}
			\begin{tikzcd}[cramped,sep=tiny]
				& {\overset{w_{1}}{\bullet}} \\
				{\overset{x_{1}}{\bullet}} && {\overset{y_{1}}{\bullet}} \\
				& {\overset{z_{1}}{\bullet}}
				\arrow[no head, from=1-2, to=2-1]
				\arrow[no head, from=1-2, to=2-3]
				\arrow[no head, from=2-1, to=3-2]
				\arrow[no head, from=2-3, to=3-2]
			\end{tikzcd}
			\hspace{2mm}
			\begin{tikzcd}[cramped,sep=tiny]
				& {\overset{w_{2}}{\bullet}} \\
				{\overset{x_{2}}{\bullet}} && {\overset{y_{2}}{\bullet}} \\
				& {\overset{z_{2}}{\bullet}}
				\arrow[no head, from=1-2, to=2-1]
				\arrow[no head, from=1-2, to=2-3]
				\arrow[no head, from=2-1, to=3-2]
				\arrow[no head, from=2-3, to=3-2]
			\end{tikzcd}
			\hspace{2mm}
			\begin{tikzcd}[cramped,sep=tiny]
				& {\overset{w_{3}}{\bullet}} \\
				{\overset{x_{3}}{\bullet}} && {\overset{y_{3}}{\bullet}} \\
				& {\overset{z_{3}}{\bullet}}
				\arrow[no head, from=1-2, to=2-1]
				\arrow[no head, from=1-2, to=2-3]
				\arrow[no head, from=2-1, to=3-2]
				\arrow[no head, from=2-3, to=3-2]
			\end{tikzcd}
			&
			\sur{\phi}
			&
			\begin{tikzcd}[cramped,sep=tiny]
				& {\overset{u_{1}}\bullet} && {\overset{u_{2}}\bullet} && {\overset{u_{3}}\bullet} \\
				{\underset{v_{0}}\bullet} && {\underset{v_{1}}\bullet} && {\underset{v_{2}}\bullet} && {\underset{v_{3}}\bullet}
				\arrow[no head, from=1-2, to=2-1]
				\arrow[no head, from=1-2, to=2-3]
				\arrow[no head, from=1-4, to=2-3]
				\arrow[no head, from=1-4, to=2-5]
				\arrow[no head, from=1-6, to=2-5]
				\arrow[no head, from=1-6, to=2-7]
			\end{tikzcd}
		\end{array}\]
		%
		%
		%
		\[\ker(\phi) \, =
		\begin{tikzcd}[cramped,sep=small]
			& {\overset{\theta+y_{1}+y_{2}}\bullet} \\
			{\underset{z_{1}+z_{2}}\bullet}
			\arrow[no head, from=1-2, to=2-1]
		\end{tikzcd}
		\hspace{2mm}
		\begin{tikzcd}[cramped,sep=scriptsize]
			{\overset{\theta+x_{2}+y_{2}}\bullet} \\
			{\underset{z_{2}}\bullet}
			\arrow[curve={height=-12pt}, no head, from=1-1, to=2-1]
			\arrow[curve={height=12pt}, no head, from=1-1, to=2-1]
		\end{tikzcd}
		\hspace{2mm}
		\begin{tikzcd}[cramped,sep=small]
			{\overset{\theta+x_{2}+x_{3}}\bullet} \\
			& {\underset{z_{2}+z_{3}}\bullet}
			\arrow[no head, from=1-1, to=2-2]
		\end{tikzcd}
		\]
		where $\phi \vcentcolon w_{i} \mapsto u_{i},\, \theta \mapsto v_{1} + v_{2}$.
		\newline
		Since the arguments involved are all similar, the finer details are left to the reader in the remaining cases. The necessary checks can be quickly carried out by inspecting the diagrams. Indeed, it is straightforward to verify that $\phi$ is a well-defined surjection and that $\{z_{1} + z_{2},\, z_{2},\, z_{2} + z_{3},\, \theta + y_{1} + y_{2},\, \theta + x_{2} + y_{2},\, \theta + x_{2} + x_{3}\}$ is a $k$-linearly independent subset of $\ker(\phi)$. Since $\dim_{k}(\ker(\phi)) = 13 - 7 = 6$, this is a $k$-basis. Finally, we check that the $kV_{4}$-action on this basis agrees with the above diagram. This tells us that $\ker(\phi) \cong E_{t} \oplus E_{t+1_{k}} \oplus E_{\infty}$, hence $\ppdim\left(M_{7}\right) = 1$.

		\item There is a short exact sequence:
		$\begin{tikzcd}[cramped,sep=small]
			0 & k & {E_{\infty} \oplus E_{t}} & {W_{3}} & 0
			\arrow[from=1-1, to=1-2]
			\arrow[from=1-2, to=1-3]
			\arrow[from=1-3, to=1-4]
			\arrow[from=1-4, to=1-5]
		\end{tikzcd}$
		\newline
		This is described explicitly as follows:
		%
		%
		%
		%
		\[\begin{array}{ccccc}
			\begin{tikzcd}[cramped,sep=tiny]
				{\overset{y_{0}+x_{1}}\bullet}
			\end{tikzcd}
			&
			\inj{}
			&
			\begin{tikzcd}[cramped,sep=tiny]
				{\overset{w_{0}}{\bullet}} \\
				& {\underset{y_{0}}{\bullet}}
				\arrow[no head, from=1-1, to=2-2]
			\end{tikzcd}
			\hspace{2mm}
			\begin{tikzcd}[cramped,sep=tiny]
				& {\overset{w_{1}}{\bullet}} \\
				{\underset{x_{1}}{\bullet}}
				\arrow[no head, from=1-2, to=2-1]
			\end{tikzcd}
			&
			\sur{\phi}
			&
			\begin{tikzcd}[cramped,sep=tiny]
				{\overset{u_{0}}\bullet} && {\overset{u_{1}}\bullet} \\
				& {\underset{v_{1}}\bullet}
				\arrow[no head, from=1-1, to=2-2]
				\arrow[no head, from=1-3, to=2-2]
			\end{tikzcd}
		\end{array}\]
		where $\phi \vcentcolon w_{i} \mapsto u_{i}$.
		\newline
		It is straightforward to check that $\phi$ is a well-defined surjection and that $\{y_{0}+x_{1}\}$ is a $k$-basis for $\ker(\phi)$. It follows that $\ker(\phi) \cong k$, hence $\ppdim\left(W_{3}\right) = 1$.

		\item There is a short exact sequence:
		$\begin{tikzcd}[cramped,sep=small]
			0 & k & {E_{\infty} \oplus E_{t+1_{k}} \oplus E_{t}} & {W_{5}} & 0
			\arrow[from=1-1, to=1-2]
			\arrow[from=1-2, to=1-3]
			\arrow[from=1-3, to=1-4]
			\arrow[from=1-4, to=1-5]
		\end{tikzcd}$
		\newline
		This is described explicitly as follows:
		%
		%
		%
		%
		%
		\[\begin{array}{ccccc}
			\begin{tikzcd}[cramped,sep=tiny]
				{\overset{y_{0}+x_{1}+x_{2}}\bullet}
			\end{tikzcd}
			&
			\inj{}
			&
			\begin{tikzcd}[cramped,sep=tiny]
				{\overset{w_{0}}{\bullet}} \\
				& {\underset{y_{0}}{\bullet}}
				\arrow[no head, from=1-1, to=2-2]
			\end{tikzcd}
			\hspace{2mm}
			\begin{tikzcd}[cramped,sep=small]
				{\overset{w_{1}}\bullet} \\
				{\underset{x_{1}}\bullet}
				\arrow[curve={height=-10pt}, no head, from=1-1, to=2-1]
				\arrow[curve={height=10pt}, no head, from=1-1, to=2-1]
			\end{tikzcd}
			\hspace{2mm}
			\begin{tikzcd}[cramped,sep=tiny]
				& {\overset{w_{2}}{\bullet}} \\
				{\underset{x_{2}}{\bullet}}
				\arrow[no head, from=1-2, to=2-1]
			\end{tikzcd}
			&
			\sur{\phi}
			&
			\begin{tikzcd}[cramped,sep=tiny]
				{\overset{u_{0}}\bullet} && {\overset{u_{1}}\bullet} && {\overset{u_{2}}\bullet} \\
				& {\underset{v_{1}}\bullet} && {\underset{v_{2}}\bullet}
				\arrow[no head, from=1-1, to=2-2]
				\arrow[no head, from=1-3, to=2-2]
				\arrow[no head, from=1-3, to=2-4]
				\arrow[no head, from=1-5, to=2-4]
			\end{tikzcd}
		\end{array}\]
		where $\phi \vcentcolon w_{0} \mapsto u_{0},\, w_{1} \mapsto u_{0} + u_{1} + u_{2},\, w_{2} \mapsto u_{2}$.
		\newline
		Observe that $\phi$ is a well-defined surjection. Notably, $\{\phi(w_{0}),\, \phi(w_{1}),\, \phi(w_{2})\}$ is a generating set for $W_{5}$ and $(a + b)\phi(w_{1}) = (v_{1}+v_{2})+(v_{1}+v_{2}) = 0_{W_{5}}$. Also, $\{y_{0}+x_{1}+x_{2}\}$ is a $k$-basis for $\ker(\phi)$ since $\phi(x_{1}) = v_{1}+v_{2} = \phi(y_{0})+\phi(x_{2})$ and $\dim_{k}(\ker(\phi)) = 6 - 5 = 1$. It follows that $\ker(\phi) \cong k$, hence $\ppdim\left(W_{5}\right) = 1$.

		\item There is a short exact sequence:
		$\begin{tikzcd}[cramped,sep=small]
			0 & {E_{t}} & {kV_{4} \oplus E_{t}} & {E_{t,2}} & 0
			\arrow[from=1-1, to=1-2]
			\arrow[from=1-2, to=1-3]
			\arrow[from=1-3, to=1-4]
			\arrow[from=1-4, to=1-5]
		\end{tikzcd}$
		\newline
		This is described explicitly as follows:
		%
		%
		%
		%
		\[\begin{array}{ccccc}
			\begin{tikzcd}[cramped,sep=tiny]
				& {\overset{y_{0}+x_{1}}{\bullet}} \\
				{\underset{z_{0}}{\bullet}}
				\arrow[no head, from=1-2, to=2-1]
			\end{tikzcd} 
			&
			\inj{}
			&
			\begin{tikzcd}[cramped,sep=tiny]
				& {\overset{w_{0}}{\bullet}} \\
				{\overset{x_{0}}{\bullet}} && {\overset{y_{0}}{\bullet}} \\
				& {\overset{z_{0}}{\bullet}}
				\arrow[no head, from=1-2, to=2-1]
				\arrow[no head, from=1-2, to=2-3]
				\arrow[no head, from=2-1, to=3-2]
				\arrow[no head, from=2-3, to=3-2]
			\end{tikzcd}
			\hspace{2mm}
			\begin{tikzcd}[cramped,sep=tiny]
				& {\overset{w_{1}}{\bullet}} \\
				{\underset{x_{1}}{\bullet}}
				\arrow[no head, from=1-2, to=2-1]
			\end{tikzcd}
			&
			\sur{\phi}
			&
			\begin{tikzcd}[cramped,sep=tiny]
				& {\overset{u_{0}}\bullet} && {\overset{u_{1}}\bullet} \\
				{\underset{v_{0}}\bullet} && {\underset{v_{1}}\bullet}
				\arrow[no head, from=1-2, to=2-1]
				\arrow[no head, from=1-2, to=2-3]
				\arrow[no head, from=1-4, to=2-3]
			\end{tikzcd}
		\end{array}\]
		where $\phi \vcentcolon w_{i} \mapsto u_{i}$.
		\newline
		Observe that $\phi$ is a well-defined surjection and that $\{y_{0}+x_{1},\, z_{0}\}$ is a $k$-linearly independent subset of $\ker(\phi)$. Since $\dim_{k}(\ker(\phi)) = 6 - 4 = 2$, this is a $k$-basis. Finally, the $kV_{4}$-action on this basis agrees with the diagram above, so $\ker(\phi) \cong E_{t}$ and $\ppdim\left(E_{t,2}\right) = 1$.

		\item There is a short exact sequence:
		$\begin{tikzcd}[cramped,sep=small]
			0 & {E_{t+1_{k}}} & {kV_{4} \oplus E_{t+1_{k}}} & {E_{(t+1_{k}),2}} & 0
			\arrow[from=1-1, to=1-2]
			\arrow[from=1-2, to=1-3]
			\arrow[from=1-3, to=1-4]
			\arrow[from=1-4, to=1-5]
		\end{tikzcd}$
		\newline
		This is described explicitly as follows:
		\[\begin{array}{ccccc}
			\begin{tikzcd}[cramped,sep=scriptsize]
				{\overset{x_{0}+y_{0}+x_{1}}\bullet} \\
				{\underset{z_{0}}\bullet}
				\arrow[curve={height=12pt}, no head, from=1-1, to=2-1]
				\arrow[curve={height=-12pt}, no head, from=1-1, to=2-1]
			\end{tikzcd}
			&
			\inj{}
			&
			\begin{tikzcd}[cramped,sep=tiny]
				& {\overset{w_{0}}{\bullet}} \\
				{\overset{x_{0}}{\bullet}} && {\overset{y_{0}}{\bullet}} \\
				& {\overset{z_{0}}{\bullet}}
				\arrow[no head, from=1-2, to=2-1]
				\arrow[no head, from=1-2, to=2-3]
				\arrow[no head, from=2-1, to=3-2]
				\arrow[no head, from=2-3, to=3-2]
			\end{tikzcd}
			\hspace{2mm}
			\begin{tikzcd}[cramped,sep=scriptsize]
				{\overset{w_{1}}\bullet} \\
				{\underset{x_{1}}\bullet}
				\arrow[curve={height=12pt}, no head, from=1-1, to=2-1]
				\arrow[curve={height=-12pt}, no head, from=1-1, to=2-1]
			\end{tikzcd}
			&
			\sur{\phi}
			&
			\begin{tikzcd}[cramped,sep=tiny]
				& {\overset{u_{0}}\bullet} && {\overset{u_{1}}\bullet} \\
				{\underset{v_{0}}\bullet} && {\underset{v_{1}}\bullet} && {\underset{v_{0}}\circ}
				\arrow[no head, from=1-2, to=2-1]
				\arrow[no head, from=1-2, to=2-3]
				\arrow[no head, from=1-4, to=2-3]
				\arrow[dashed, no head, from=1-4, to=2-5]
			\end{tikzcd}
		\end{array}\]
		where $\phi \vcentcolon w_{0} \mapsto u_{0},\, w_{1} \mapsto u_{0} + u_{1}$.
		\newline
		This is indeed the correct diagram for $E_{(t+1_{k}),2}$, since $(t+1_{k})^{2} = t^{2}+1_{k}$ and so $bu_{1} = v_{0}$. Observe that $\phi$ is a well-defined surjection. Notably, $\{\phi(w_{0}),\, \phi(w_{1})\}$ is a generating set for $E_{(t+1_{k}),2}$ and $(a+b)\phi(w_{1}) = (v_{0}+v_{1})+(v_{1}+v_{0}) = 0_{E_{(t+1_{k}),2}}$. Also, $\phi(x_{1}) = v_{0}+v_{1} = \phi(x_{0})+\phi(y_{0})$, so $\{x_{0}+y_{0}+x_{1},\, z_{0}\}$ is a $k$-linearly independent subset of $\ker(\phi)$. This is a $k$-basis since $\dim_{k}(\ker(\phi)) = 6 - 4 = 2$. Finally, the $kV_{4}$-action on this basis agrees with the diagram above, so $\ker(\phi) \cong E_{t+1_{k}}$ and $\ppdim\left(E_{(t+1_{k}),2}\right) = 1$.

		\item There is a short exact sequence:
		$\begin{tikzcd}[cramped,sep=small]
			0 & {E_{\infty}} & { E_{\infty} \oplus kV_{4}} & {E_{\infty,2}} & 0
			\arrow[from=1-1, to=1-2]
			\arrow[from=1-2, to=1-3]
			\arrow[from=1-3, to=1-4]
			\arrow[from=1-4, to=1-5]
		\end{tikzcd}$
		\newline
		This is described explicitly as follows:
		%
		%
		%
		%
		\[\begin{array}{ccccc}
			\begin{tikzcd}[cramped,sep=tiny]
				{\overset{y_{0}+x_{1}}{\bullet}} \\
				& {\underset{z_{1}}{\bullet}}
				\arrow[no head, from=1-1, to=2-2]
			\end{tikzcd}
			&
			\inj{}
			&
			\begin{tikzcd}[cramped,sep=tiny]
				{\overset{w_{0}}{\bullet}} \\
				& {\underset{y_{0}}{\bullet}}
				\arrow[no head, from=1-1, to=2-2]
			\end{tikzcd}
			\hspace{2mm}
			\begin{tikzcd}[cramped,sep=tiny]
				& {\overset{w_{1}}{\bullet}} \\
				{\overset{x_{1}}{\bullet}} && {\overset{y_{1}}{\bullet}} \\
				& {\overset{z_{1}}{\bullet}}
				\arrow[no head, from=1-2, to=2-1]
				\arrow[no head, from=1-2, to=2-3]
				\arrow[no head, from=2-1, to=3-2]
				\arrow[no head, from=2-3, to=3-2]
			\end{tikzcd}
			&
			\sur{\phi}
			&
			\begin{tikzcd}[cramped,sep=tiny]
				{\overset{u_{0}}\bullet} && {\overset{u_{1}}\bullet} \\
				& {\underset{v_{0}}\bullet} && {\underset{v_{1}}\bullet}
				\arrow[no head, from=1-1, to=2-2]
				\arrow[no head, from=1-3, to=2-2]
				\arrow[no head, from=1-3, to=2-4]
			\end{tikzcd}
		\end{array}\]
		where $\phi \vcentcolon w_{i} \mapsto u_{i}$.
		\newline
		Observe that $\phi$ is a well-defined surjection and that $\{y_{0}+x_{1},\, z_{1}\}$ is a $k$-linearly independent subset of $\ker(\phi)$. Since $\dim_{k}(\ker(\phi)) = 6 - 4 = 2$, this is a $k$-basis. Finally, the $kV_{4}$-action on this basis agrees with the diagram above, so $\ker(\phi) \cong E_{\infty}$ and $\ppdim\left(E_{\infty,2}\right) = 1$.
		\qedhere
	\end{enumerate}
\end{proof}

\subsection{The Global Permutation Dimension}
\label{sec:glob}
\mbox{}\\
As it turns out, any $kV_{4}$-module admits a permutation resolution of length at most two. It suffices to exhibit such resolutions for all of the remaining indecomposables.

\begin{rem}
\label{rem:ppdimsum}
	It is useful to note that for a direct sum of $kV_{4}$-modules $\bigoplus_{i = 1}^{n}\left(N_{i}\right)$ we always have $\ppdim\left(\bigoplus_{i = 1}^{n}\left(N_{i}\right)\right) \leq \max \left\{\ppdim(N_{i}) \vcentcolon 1 \leq i \leq n \right\}$. Indeed, there exists a permutation resolution for each $N_{i}$ of length $l_{i} \defeq \ppdim(N_{i})$ and their degree-wise sum gives a permutation resolution for $\bigoplus_{i = 1}^{n}\left(N_{i}\right)$ of length $l \defeq \max \left\{l_{i} \vcentcolon 1 \leq i \leq n \right\}$. If each indecomposable has permutation dimension at most two, it follows that $\ppdim_{k}(V_{4}) \leq 2$.
\end{rem}

\begin{rem}
\label{rem:dim1ker}
	Suppose we find a surjection $\phi \vcentcolon P \sur{} N$ such that $P$ is a permutation module and $\ppdim\left(\ker(\phi)\right) = 1$ with permutation resolution $\ker(\phi') \inj{\iota'} P' \sur{\phi'} \ker(\phi)$. We obtain a length two permutation resolution for $N$ by splicing short exact 
	sequences:
	\[\begin{tikzcd}[cramped,sep=normal]
		{\ker(\phi')} & {P'} && P & N \\
		&& {\ker(\phi)}
		\arrow["{\iota'}", hook, from=1-1, to=1-2]
		\arrow["{\iota \circ \phi'}", from=1-2, to=1-4]
		\arrow["{\phi'}"', two heads, from=1-2, to=2-3]
		\arrow["\phi", two heads, from=1-4, to=1-5]
		\arrow["\iota"', hook, from=2-3, to=1-4]
	\end{tikzcd}\]
	Therefore $\ppdim(N) \leq 2$. This will be our general strategy.
\end{rem}

\begin{prop}
\label{prop:ppdimM}
	For each $n \geq 4$, we have $\ppdim\left(M_{2n+1}\right) \leq 2$.
\end{prop}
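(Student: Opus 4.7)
Plan. The natural strategy is to invoke Remark~\ref{rem:dim1ker}: for each $n \geq 4$ I would aim to exhibit a surjection $\phi \vcentcolon P \twoheadrightarrow M_{2n+1}$ from a permutation module $P$ such that $\ker(\phi)$ is a direct sum of modules drawn from the $\ppdim$-one list of Theorem~\ref{thm:dim1} (possibly together with $\ppdim$-zero permutation summands). Splicing with a length-one permutation resolution of $\ker(\phi)$ then produces the desired length-two resolution of $M_{2n+1}$.

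Any such surjection must induce a surjection on heads onto $\hd(M_{2n+1}) \cong k^{n}$, so $P$ must contain at least $n$ copies of $kV_{4}$. My first attempt would therefore be the projective cover $\phi \vcentcolon kV_{4}^{\oplus n} \twoheadrightarrow M_{2n+1}$ with $w_{i} \mapsto u_{i}$. A direct basis calculation, in the style of those in Theorem~\ref{thm:dim1}, shows that $\ker(\phi)$ is spanned by the socle elements $z_{i} = abw_{i}$ together with the ``bridge'' elements $y_{j} + x_{j+1}$, and the $kV_{4}$-action identifies it with the indecomposable $M_{2n-1}$. For $n = 4$ this closes the argument immediately, since $M_{7}$ appears in the $\ppdim$-one list and so gives $\ppdim(M_{9}) \leq 2$.

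For $n \geq 5$ the naive kernel $M_{2n-1}$ is no longer on the $\ppdim$-one list, so I would enlarge $P$ to $k^{\oplus r} \oplus kV_{4}^{\oplus n}$ with the trivial generators $\theta_{j}$ targeting $\soc(M_{2n+1}) = \langle v_{0}, \ldots, v_{n}\rangle$ (the only option available, since $a \theta_{j} = b \theta_{j} = 0$). Each $\theta_{j} \mapsto \lambda_{j}$ introduces one fresh kernel generator $\theta_{j} + \ell_{j}$, where $\ell_{j}$ is any preimage of $\lambda_{j}$ in $kV_{4}^{\oplus n}$, and suitable choices should break the $M_{2n-1}$-part of the kernel into smaller pieces. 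The $M_{7}$ case of Theorem~\ref{thm:dim1} (with $r = 1$, $\theta \mapsto v_{1} + v_{2}$ and kernel $E_{t} \oplus E_{t+1_{k}} \oplus E_{\infty}$) is the guiding example: a combination of the form $\theta_{j} + x_{j+1} + y_{j+1}$ (when $\theta_{j} \mapsto v_{j} + v_{j+1}$) satisfies $a(\theta_{j} + x_{j+1} + y_{j+1}) = b(\theta_{j} + x_{j+1} + y_{j+1}) = z_{j+1}$, producing an $E_{t+1_{k}}$-type summand.

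The central difficulty I anticipate is finding a \emph{uniform} family of choices of $r$ and of the $\theta_{j}$-targets (valid for all $n \geq 5$, or at least for infinite families of $n$) that produces a genuine direct-sum decomposition of $\ker(\phi)$ into modules from the allowed list. The socle and head dimensions of $\ker(\phi)$ impose tight numerical constraints that narrow the candidates, but actually exhibiting the summands requires carefully engineering a basis of ``top'' elements that separates $E_{t}$-, $E_{\infty}$-, $E_{t+1_{k}}$-, $M$- and $W$-type generators with pairwise-disjoint socle images. If no uniform construction materialises, I would fall back on an inductive argument via the short exact sequence $0 \to k \to M_{7} \oplus M_{2n-5} \to M_{2n+1} \to 0$ obtained by overlapping a sub-$M_{7}$ with a sub-$M_{2n-5}$ at their shared socle element $v_{3}$, combined with permutation resolutions of the smaller pieces.
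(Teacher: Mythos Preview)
Your overall strategy is exactly the paper's: surject from $k^{\oplus r}\oplus kV_{4}^{\oplus n}$ onto $M_{2n+1}$ with $w_{i}\mapsto u_{i}$ and the trivial generators $\theta_{j}$ sent into $\soc(M_{2n+1})$, then invoke Remark~\ref{rem:dim1ker}. The gap is that you have not found the uniform choice of targets, and you are looking in a harder place than necessary. Guided by the $M_{7}$ resolution you try combinations $\theta_{j}\mapsto v_{j}+v_{j+1}$, but the paper's choice is simpler: take $r=n+1$ and send $\theta_{i}\mapsto v_{i}$ for \emph{every} $0\le i\le n$. Then for each $1\le i\le n$ the triple
\[
\theta_{i-1}+x_{i},\qquad y_{i}+\theta_{i},\qquad z_{i}
\]
lies in $\ker(\phi)$, with $a(\theta_{i-1}+x_{i})=0$, $b(\theta_{i-1}+x_{i})=z_{i}$, $a(y_{i}+\theta_{i})=z_{i}$, $b(y_{i}+\theta_{i})=0$. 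Each triple therefore spans a copy of $W_{3}$, the $n$ triples are $k$-linearly independent, and a dimension count ($5n+1-(2n+1)=3n$) shows $\ker(\phi)\cong W_{3}^{\oplus n}$. Since $\ppdim(W_{3})=1$, Remarks~\ref{rem:ppdimsum} and~\ref{rem:dim1ker} finish the proof uniformly for all $n$.

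Two further comments. First, your fallback via $0\to k\to M_{7}\oplus M_{2n-5}\to M_{2n+1}\to 0$ does not obviously yield a length-two resolution: composing a length-two resolution of $M_{7}\oplus M_{2n-5}$ with the quotient map enlarges the degree-zero kernel by an extension by $k$, and you would still need to show that this new kernel has $\ppdim\le 1$, which is no easier than the original problem. Second, even your base-case method (projective cover, kernel $M_{2n-1}$) only settles $n=4$; for $n=5,6$ the kernel is $M_{9}$ or $M_{11}$, neither of which has $\ppdim\le 1$, so the induction cannot get off the ground that way.
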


\begin{proof}
	There exists a short exact sequence:
	\[\begin{tikzcd}[cramped,sep=small]
		0 & {W_{3}^{\oplus n}} & {k^{\oplus (n+1)} \oplus kV_{4}^{\oplus n}} & {M_{2n+1}} & 0
		\arrow[from=1-1, to=1-2]
		\arrow[from=1-2, to=1-3]
		\arrow[from=1-3, to=1-4]
		\arrow[from=1-4, to=1-5]
	\end{tikzcd}\]
	This is described explicitly as follows:
	\[\begin{array}{ccc}
		\begin{tikzcd}[cramped,sep=tiny]
			{\overset{\theta_{0}}\bullet}
		\end{tikzcd}
		\hspace{2mm}
		\begin{tikzcd}[cramped,sep=tiny]
			& {\overset{w_{1}}{\bullet}} \\
			{\overset{x_{1}}{\bullet}} && {\overset{y_{1}}{\bullet}} \\
			& {\overset{z_{1}}{\bullet}}
			\arrow[no head, from=1-2, to=2-1]
			\arrow[no head, from=1-2, to=2-3]
			\arrow[no head, from=2-1, to=3-2]
			\arrow[no head, from=2-3, to=3-2]
		\end{tikzcd}
		\hspace{2mm}
		\begin{tikzcd}[cramped,sep=tiny]
			{\overset{\theta_{1}}\bullet}
		\end{tikzcd}
		\hspace{2mm}
		\begin{tikzcd}[cramped,sep=tiny]
			& {\overset{w_{2}}{\bullet}} \\
			{\overset{x_{2}}{\bullet}} && {\overset{y_{2}}{\bullet}} \\
			& {\overset{z_{2}}{\bullet}}
			\arrow[no head, from=1-2, to=2-1]
			\arrow[no head, from=1-2, to=2-3]
			\arrow[no head, from=2-1, to=3-2]
			\arrow[no head, from=2-3, to=3-2]
		\end{tikzcd}
		\hspace{2mm}
		\begin{tikzcd}[cramped,sep=tiny]
			{\overset{\theta_{2}}\bullet}
		\end{tikzcd}
		\;\;\cdots
		&
		\sur{\phi}
		&
		\begin{tikzcd}[cramped,sep=tiny]
			& {\overset{u_{1}}\bullet} && {\overset{u_{2}}\bullet} && {} & {\overset{u_{n}}\bullet} \\
			{\underset{v_{0}}\bullet} && {\underset{v_{1}}\bullet} && {\underset{v_{2}}\bullet} & {} && {\underset{v_{n}}\bullet}
			\arrow[no head, from=1-2, to=2-1]
			\arrow[no head, from=1-2, to=2-3]
			\arrow[no head, from=1-4, to=2-3]
			\arrow[no head, from=1-4, to=2-5]
			\arrow["\ldots"{description, pos=0.4}, draw=none, from=1-6, to=2-6]
			\arrow[shorten >=18pt, no head, from=1-7, to=2-6]
			\arrow[no head, from=1-7, to=2-8]
			\arrow[shorten >=15pt, no head, from=2-5, to=1-6]
		\end{tikzcd}
	\end{array}\]
	%
	%
	\[\ker(\phi) \,=\;
	\begin{tikzcd}[cramped,sep=tiny]
		{\overset{\theta_{0}+x_{1}}\bullet} && {\overset{y_{1}+\theta_{1}}\bullet} \\
		& {\underset{z_{1}}\bullet}
		\arrow[no head, from=1-1, to=2-2]
		\arrow[no head, from=1-3, to=2-2]
	\end{tikzcd}
	\hspace{3mm}
	\begin{tikzcd}[cramped,sep=tiny]
		{\overset{\theta_{1}+x_{2}}\bullet} && {\overset{y_{2}+\theta_{2}}\bullet} \\
		& {\underset{z_{2}}\bullet}
		\arrow[no head, from=1-1, to=2-2]
		\arrow[no head, from=1-3, to=2-2]
	\end{tikzcd}
	\;\cdots
	\]
	where $\phi \vcentcolon w_{i} \mapsto u_{i},\, \theta_{i} \mapsto v_{i}$.
	\pagebreak
	\newline
	Observe that $\phi$ is a well-defined surjection and $\bigcup_{i=1}^{n}\left(\{\theta_{i-1}+x_{i},\, y_{i}+\theta_{i},\, z_{i}\}\right) \subseteq \ker(\phi)$ is a $k$-linearly independent subset of size $3n$, notably $\phi(\theta_{i-1}+x_{i}) = v_{i-1}+v_{i-1} = 0_{M_{2n+1}}$ and $\phi(y_{i}+\theta_{i}) = v_{i}+v_{i} = 0_{M_{2n+1}}$ for $1 \leq i \leq n$. This is a $k$-basis since $\dim_{k}(\ker(\phi)) = \linebreak (n+1+4n) - (2n+1) = 3n$. The $kV_{4}$-action on this basis agrees with the above diagram, notably $a\left(\theta_{i-1}+x_{i}\right) = b\left(y_{i}+\theta_{i}\right) = 0$ and $b\left(\theta_{i-1}+x_{i}\right) = a\left(y_{i}+\theta_{i}\right) = z_{i}$ for $1 \leq i \leq n$. It follows that $\ker(\phi) \cong W_{3}^{\oplus n}$ and hence $\ppdim(\ker(\phi)) = 1$ using Remark \ref{rem:ppdimsum}. Finally, Remark \ref{rem:dim1ker} tells us that $\ppdim\left(M_{2n+1}\right) \leq 2$.
\end{proof}

\begin{prop}
\label{prop:ppdimW}
	For each $n \geq 3$, we have $\ppdim\left(W_{2n+1}\right) \leq 2$.
\end{prop}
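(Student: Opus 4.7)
The plan is to mimic the strategy of Proposition \ref{prop:ppdimM}: construct an explicit surjection from a permutation module onto $W_{2n+1}$ whose kernel is a direct sum of copies of $W_{3}$ and $k$, and then invoke Remarks \ref{rem:ppdimsum} and \ref{rem:dim1ker}. I would take $P \defeq E_{\infty} \oplus kV_{4}^{\oplus(n-1)} \oplus E_{t} \oplus k^{\oplus n}$ and define $\phi \vcentcolon P \to W_{2n+1}$ on generators by $w_{0} \mapsto u_{0}$ (from $E_{\infty}$), $w_{i} \mapsto u_{i}$ for $1 \leq i \leq n-1$ (from the $i$th copy of $kV_{4}$), $w_{n} \mapsto u_{n}$ (from $E_{t}$), and $\theta_{j} \mapsto v_{j}$ for $1 \leq j \leq n$ (one $k$ per bottom node). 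Well-definedness reduces to the relations $au_{0}=0$, $bu_{n}=0$, and $av_{j}=bv_{j}=0$, all visible from the diagram of $W_{2n+1}$, and surjectivity is immediate since every generator of the target is hit.

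A dimension count gives $\dim_{k}(\ker(\phi)) = 3n - 1$, and I would propose the basis
\[
\{z_{i} \vcentcolon 1 \leq i \leq n-1\} \;\cup\; \{\theta_{j} + y_{j-1} \vcentcolon 1 \leq j \leq n\} \;\cup\; \{\theta_{j} + x_{j} \vcentcolon 1 \leq j \leq n\}.
\]
Each element lies in $\ker(\phi)$ because every $v_{j}$ has three preimages $y_{j-1}$, $x_{j}$, $\theta_{j}$ under $\phi$, and linear independence is clear since distinct $\theta_{j}$'s appear in each relation. Computing the $kV_{4}$-action, the $z_{i}$'s are annihilated, and the two boundary elements $\theta_{1} + y_{0}$ and $\theta_{n} + x_{n}$ are likewise annihilated since $y_{0} \in E_{\infty}$ and $x_{n} \in E_{t}$ lie in the socles of those summands. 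For each $1 \leq i \leq n-1$, the pair $\theta_{i} + x_{i}$, $\theta_{i+1} + y_{i}$ is sent to $z_{i}$ by $b$ and $a$ respectively, matching the diagram of $W_{3}$ with apex $z_{i}$. Hence $\ker(\phi) \cong k^{\oplus 2} \oplus W_{3}^{\oplus(n-1)}$.

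Since $\ppdim(k) = 0$ and $\ppdim(W_{3}) = 1$ by Theorems \ref{thm:dim0} and \ref{thm:dim1}, Remark \ref{rem:ppdimsum} gives $\ppdim(\ker(\phi)) \leq 1$, and Remark \ref{rem:dim1ker} then yields $\ppdim(W_{2n+1}) \leq 2$. The only real work is the bookkeeping required to identify the $W_{3}$ summands inside the kernel; there is no conceptual obstacle, as the resolution is essentially the $M_{2n+1}$ construction with $E_{\infty}$ and $E_{t}$ grafted onto the two ends to compensate for $W_{2n+1}$ having one more top node than bottom node.
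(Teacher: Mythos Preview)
Your argument is correct. The map $\phi$ is well-defined and surjective for the reasons you give, the proposed kernel basis has the right cardinality and is visibly linearly independent, and your computation of the $kV_{4}$-action on it is accurate: for each $1 \leq i \leq n-1$ the triple $\{\theta_{i}+x_{i},\,\theta_{i+1}+y_{i},\,z_{i}\}$ spans a copy of $W_{3}$, while $\theta_{1}+y_{0}$ and $\theta_{n}+x_{n}$ are annihilated by both $a$ and $b$. Thus $\ker(\phi) \cong k^{\oplus 2} \oplus W_{3}^{\oplus(n-1)}$, which has permutation dimension at most one, and Remark~\ref{rem:dim1ker} finishes the job.

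The paper takes the same overall strategy but with a different permutation cover: it uses $P = k^{\oplus n} \oplus kV_{4}^{\oplus(n+1)}$, sending each $w_{i} \mapsto u_{i}$ and $\theta_{i} \mapsto v_{i}$, and obtains $\ker(\phi) \cong W_{3}^{\oplus(n+1)}$. Your choice is more economical---you replace the two outermost copies of $kV_{4}$ by $E_{\infty}$ and $E_{t}$, exploiting $au_{0}=0$ and $bu_{n}=0$, which shrinks $P$ from dimension $5n+4$ to $5n$ and trades two copies of $W_{3}$ in the kernel for two copies of $k$. The paper's version has the virtue of uniformity: it uses only $k$ and $kV_{4}$, exactly as in the $M_{2n+1}$ and $E_{f,n}$ cases, so the three proofs read as minor variations of one another. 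Either construction establishes the bound.
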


\begin{proof}
	There exists a short exact sequence:
	\[\begin{tikzcd}[cramped,sep=small]
		0 & {W_{3}^{\oplus(n+1)}} & {k^{\oplus n} \oplus kV_{4}^{\oplus(n+1)}} & {W_{2n+1}} & 0
		\arrow[from=1-1, to=1-2]
		\arrow[from=1-2, to=1-3]
		\arrow[from=1-3, to=1-4]
		\arrow[from=1-4, to=1-5]
	\end{tikzcd}\]
	This is described explicitly as follows:
	\[\begin{array}{ccc}
		\begin{tikzcd}[cramped,sep=tiny]
			& {\overset{w_{0}}{\bullet}} \\
			{\overset{x_{0}}{\bullet}} && {\overset{y_{0}}{\bullet}} \\
			& {\overset{z_{0}}{\bullet}}
			\arrow[no head, from=1-2, to=2-1]
			\arrow[no head, from=1-2, to=2-3]
			\arrow[no head, from=2-1, to=3-2]
			\arrow[no head, from=2-3, to=3-2]
		\end{tikzcd}
		\hspace{2mm}
		\begin{tikzcd}[cramped,sep=tiny]
			{\overset{\theta_{1}}\bullet}
		\end{tikzcd}
		\hspace{2mm}
		\begin{tikzcd}[cramped,sep=tiny]
			& {\overset{w_{1}}{\bullet}} \\
			{\overset{x_{1}}{\bullet}} && {\overset{y_{1}}{\bullet}} \\
			& {\overset{z_{1}}{\bullet}}
			\arrow[no head, from=1-2, to=2-1]
			\arrow[no head, from=1-2, to=2-3]
			\arrow[no head, from=2-1, to=3-2]
			\arrow[no head, from=2-3, to=3-2]
		\end{tikzcd}
		\hspace{2mm}
		\begin{tikzcd}[cramped,sep=tiny]
			{\overset{\theta_{2}}\bullet}
		\end{tikzcd}
		\;\;\cdots\;\;
		\begin{tikzcd}[cramped,sep=tiny]
			& {\overset{w_{n}}{\bullet}} \\
			{\overset{x_{n}}{\bullet}} && {\overset{y_{n}}{\bullet}} \\
			& {\overset{z_{n}}{\bullet}}
			\arrow[no head, from=1-2, to=2-1]
			\arrow[no head, from=1-2, to=2-3]
			\arrow[no head, from=2-1, to=3-2]
			\arrow[no head, from=2-3, to=3-2]
		\end{tikzcd}
		&
		\sur{\phi}
		&
		\begin{tikzcd}[cramped,sep=tiny]
			{\overset{u_{0}}\bullet} && {\overset{u_{1}}\bullet} && {\overset{u_{2}}\bullet} & {} && {\overset{u_{n}}\bullet} \\
			& {\underset{v_{1}}\bullet} && {\underset{v_{2}}\bullet} && {} & {\underset{v_{n}}\bullet}
			\arrow[no head, from=1-1, to=2-2]
			\arrow[no head, from=1-3, to=2-2]
			\arrow[no head, from=1-3, to=2-4]
			\arrow[shorten >=10pt, no head, from=1-5, to=2-6]
			\arrow["\ldots"{description, pos=0.4}, shift left, draw=none, from=1-6, to=2-6]
			\arrow[no head, from=1-8, to=2-7]
			\arrow[no head, from=2-4, to=1-5]
			\arrow[shorten >=15pt, no head, from=2-7, to=1-6]
		\end{tikzcd}
	\end{array}\]
	%
	%
	%
	\[\ker(\phi) \,=\;
	\begin{tikzcd}[cramped,sep=tiny]
		{\overset{x_{0}}\bullet} && {\overset{y_{0}+\theta_{1}}\bullet} \\
		& {\underset{z_{0}}\bullet}
		\arrow[no head, from=1-1, to=2-2]
		\arrow[no head, from=1-3, to=2-2]
	\end{tikzcd}
	\hspace{3mm}
	\begin{tikzcd}[cramped,sep=tiny]
		{\overset{\theta_{1}+x_{1}}\bullet} && {\overset{y_{1}+\theta_{2}}\bullet} \\
		& {\underset{z_{1}}\bullet}
		\arrow[no head, from=1-1, to=2-2]
		\arrow[no head, from=1-3, to=2-2]
	\end{tikzcd}
	\;\cdots\;
	\begin{tikzcd}[cramped,sep=tiny]
		{\overset{\theta_{n}+x_{n}}\bullet} && {\overset{y_{n}}\bullet} \\
		& {\underset{z_{n}}\bullet}
		\arrow[no head, from=1-1, to=2-2]
		\arrow[no head, from=1-3, to=2-2]
	\end{tikzcd}
	\]
	where $\phi \vcentcolon w_{i} \mapsto u_{i},\, \theta_{i} \mapsto v_{i}$.
	\newline
	Observe that $\phi$ is a well-defined surjection. The following is a $k$-linearly independent subset of $\ker(\phi)$: $\{x_{0},\, y_{0}+\theta_{1},\, z_{0}\} \cup \bigcup_{i=1}^{n-1}\left(\{\theta_{i}+x_{i},\, y_{i}+\theta_{i+1},\, z_{i}\}\right) \cup \{\theta_{n}+x_{n},\, y_{n},\, z_{n}\}$. Notably, $\phi(\theta_{i}+x_{i}) = \phi(y_{i-1}+\theta_{i}) = v_{i}+v_{i} = 0_{W_{2n+1}}$ for $1 \leq i \leq n$. This is a $k$-basis since $\dim_{k}(\ker(\phi)) = \left(n+4(n+1)\right) - (2n+1) = 3n+3$; the size of this subset. The $kV_{4}$-action on this basis agrees with the above diagram, notably $a\left(\theta_{i}+x_{i}\right) = b\left(y_{i}+\theta_{i+1}\right) = 0$ and $b\left(\theta_{i}+x_{i}\right) = a\left(y_{i}+\theta_{i+1}\right) = z_{i}$ for $1 \leq i \leq n-1$. It follows that $\ker(\phi) \cong W_{3}^{\oplus n+1}$ so $\ppdim(\ker(\phi)) = 1$ and therefore $\ppdim\left(W_{2n+1}\right) \leq 2$ by Remark \ref{rem:dim1ker}.
\end{proof}

\begin{prop}
\label{prop:ppdimE}
	Let $n \geq 1$ and either $f \in k[t]$ a monic irreducible polynomial or $f = \infty$. Then $\ppdim\left(E_{f,n}\right) \leq 2$.
\end{prop}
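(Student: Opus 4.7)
The plan is to apply Remark \ref{rem:dim1ker}: I would construct a surjection $\phi \vcentcolon P \to E_{f,n}$ from a permutation module $P$ whose kernel has permutation dimension at most one, forcing $\ppdim(E_{f,n}) \leq 2$. Since $E_{f,n}$ is generated by the $u_{i}$'s, a natural choice for $P$ incorporates a free summand for each $u_{i}$, supplemented by copies of $k$ mapping to each socle element $v_{i}$ to introduce extra socle redundancy.

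For a monic irreducible polynomial $f \neq t$ with $m \defeq n \deg(f)$, I propose $P \defeq kV_{4}^{\oplus m} \oplus k^{\oplus m}$, sending the generator $w_{i}$ of the $i$-th copy of $kV_{4}$ to $u_{i}$ and the generator $\theta_{i}$ of the $i$-th copy of $k$ to $v_{i}$; the latter map is well-defined since each $v_{i}$ lies in the socle. In the boundary case $f = t$, since $bu_{n-1} = 0$, we may replace one copy of $kV_{4}$ by $E_{t}$ whose generator covers $u_{n-1}$; symmetrically, for $f = \infty$ we replace one copy of $kV_{4}$ by $E_{\infty}$ covering $u_{0}$. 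Surjectivity of $\phi$ is immediate in each case.

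The crux is decomposing $\ker(\phi)$. Writing $x_{i} \defeq aw_{i}$, $y_{i} \defeq bw_{i}$ and $z_{i} \defeq abw_{i}$ in each $kV_{4}$ summand, I would identify a kernel basis consisting of the socle elements $z_{i}$, the crossings $\xi_{i} \defeq y_{i} + x_{i+1}$ (since both summands map to $v_{i+1}$), the redundancies $\eta_{i} \defeq \theta_{i} + x_{i}$, and---for generic $f$---a twisted element $\zeta \defeq y_{m-1} + \sum_{i} \alpha_{i} x_{i}$ arising from the relation $bu_{m-1} = \sum_{i} \alpha_{i} v_{i}$. After the change of basis $\xi_{i}' \defeq \xi_{i} + \eta_{i+1}$ (and $\zeta' \defeq \zeta + \sum_{i} \alpha_{i} \eta_{i}$ in the generic case), the actions collapse: each $\eta_{i}$ is killed by $a$ and sent to $z_{i}$ by $b$, while each $\xi_{i}'$ (respectively $\zeta'$) is killed by $b$ and sent to the corresponding $z_{i}$ by $a$. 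Each triple $\{z_{i}, \eta_{i}, \xi_{i}'\}$ then assembles into a $W_{3}$ summand, so the kernel decomposes as $W_{3}^{\oplus m}$ in the generic case (respectively $W_{3}^{\oplus (n-1)} \oplus k$ in the boundary cases, with the leftover $\eta_{n-1}$ contributing the isolated trivial module).

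By Theorem \ref{thm:dim1} and Remark \ref{rem:ppdimsum} we obtain $\ppdim(\ker(\phi)) \leq 1$, and Remark \ref{rem:dim1ker} then delivers $\ppdim(E_{f,n}) \leq 2$. The main obstacle is the bookkeeping for generic $f$: the twisted element $\zeta$ must be corrected by a linear combination of the $\eta_{i}$'s dictated precisely by the coefficients of $f^{n}$, and verifying that the resulting decomposition remains consistent at the right-hand boundary $i = m-1$ requires care. The $f = t$ and $f = \infty$ cases are somewhat cleaner since the twisted element is absent, but require the small asymmetry introduced by the $E_{t}$ or $E_{\infty}$ summand, which is precisely what absorbs the otherwise-isolated $\eta_{n-1}$ into a trivial direct summand.
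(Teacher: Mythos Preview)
Your proposal is correct and essentially matches the paper's proof: for polynomial $f$ you use the same $P = kV_4^{\oplus m} \oplus k^{\oplus m}$ and the same map, and after your change of basis the kernel elements $\{z_i,\, \eta_i,\, \xi_i',\, \zeta'\}$ coincide literally with the paper's basis $\{z_i,\, \theta_i + x_i,\, y_i + \theta_{i+1},\, y_{m-1} + \sum_i \alpha_i \theta_i\}$, giving the same decomposition $\ker(\phi) \cong W_3^{\oplus m}$. The only difference is that the paper keeps the construction uniform---your ``generic'' argument already covers $f = t$ (with all $\alpha_i = 0$ and $\zeta = y_{n-1}$), and $f = \infty$ is handled by the symmetric construction with kernel again $W_3^{\oplus n}$---whereas you single out $f = t$ and $f = \infty$ with a slightly smaller $P$ and kernel $W_3^{\oplus(n-1)} \oplus k$; both variants finish via Remark~\ref{rem:dim1ker}.
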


\begin{proof}
	Consider the case where $f \in k[t]$ is a monic irreducible polynomial. We have $f^{n} = t^{m} + \sum_{i = 0}^{m - 1}(\alpha_{i} t^{i})$, where $m \defeq n\deg(f)$ and $\alpha_{i} \in k$. There is a short exact sequence:
	\[\begin{tikzcd}[cramped,sep=small]
		0 & {W_{3}^{\oplus m}} & {k^{\oplus m} \oplus kV_{4}^{\oplus m}} & {E_{f,n}} & 0
		\arrow[from=1-1, to=1-2]
		\arrow[from=1-2, to=1-3]
		\arrow[from=1-3, to=1-4]
		\arrow[from=1-4, to=1-5]
	\end{tikzcd}\]
	This is described explicitly as follows:
	\[\begin{array}{ccc}
		\begin{tikzcd}[cramped,sep=tiny]
			{\overset{\theta_{0}}\bullet}
		\end{tikzcd}
		\hspace{2mm}
		\begin{tikzcd}[cramped,sep=tiny]
			& {\overset{w_{0}}{\bullet}} \\
			{\overset{x_{0}}{\bullet}} && {\overset{y_{0}}{\bullet}} \\
			& {\overset{z_{0}}{\bullet}}
			\arrow[no head, from=1-2, to=2-1]
			\arrow[no head, from=1-2, to=2-3]
			\arrow[no head, from=2-1, to=3-2]
			\arrow[no head, from=2-3, to=3-2]
		\end{tikzcd}
		\;\;\cdots\;\;
		\begin{tikzcd}[cramped,sep=tiny]
			{\overset{\theta_{i}}\bullet}
		\end{tikzcd}
		\hspace{2mm}
		\begin{tikzcd}[cramped,sep=tiny]
			& {\overset{w_{i}}{\bullet}} \\
			{\overset{x_{i}}{\bullet}} && {\overset{y_{i}}{\bullet}} \\
			& {\overset{z_{i}}{\bullet}}
			\arrow[no head, from=1-2, to=2-1]
			\arrow[no head, from=1-2, to=2-3]
			\arrow[no head, from=2-1, to=3-2]
			\arrow[no head, from=2-3, to=3-2]
		\end{tikzcd}
		\;\;\cdots
		&
		\sur{\phi}
		&
		\begin{tikzcd}[cramped,sep=tiny]
			& {\overset{u_{0}}\bullet} && {\overset{u_{1}}\bullet} & {} && {\overset{u_{m - 1}}\bullet} \\
			{\underset{v_{0}}\bullet} && {\underset{v_{1}}\bullet} && {} & {\underset{v_{m - 1}}\bullet} && {\underset{\sum_{i = 0}^{m - 1}(\alpha_{i} v_{i})}{\circ}}
			\arrow[no head, from=1-2, to=2-1]
			\arrow[no head, from=1-2, to=2-3]
			\arrow[no head, from=1-4, to=2-3]
			\arrow[shorten >=10pt, no head, from=1-4, to=2-5]
			\arrow["\ldots"{description, pos=0.4}, shift left, draw=none, from=1-5, to=2-5]
			\arrow[no head, from=1-7, to=2-6]
			\arrow[dashed, no head, from=1-7, to=2-8]
			\arrow[shorten >=10pt, no head, from=2-6, to=1-5]
		\end{tikzcd}
	\end{array}\]
	%
	%
	%
	\[\ker(\phi) \,=\;
	\begin{tikzcd}[cramped,sep=tiny]
		{\overset{\theta_{0}+x_{0}}\bullet} && {\overset{y_{0}+\theta_{1}}\bullet} \\
		& {\underset{z_{0}}\bullet}
		\arrow[no head, from=1-1, to=2-2]
		\arrow[no head, from=1-3, to=2-2]
	\end{tikzcd}
	\hspace{3mm}
	\begin{tikzcd}[cramped,sep=tiny]
		{\overset{\theta_{1}+x_{1}}\bullet} && {\overset{y_{1}+\theta_{2}}\bullet} \\
		& {\underset{z_{1}}\bullet}
		\arrow[no head, from=1-1, to=2-2]
		\arrow[no head, from=1-3, to=2-2]
	\end{tikzcd}
	\;\cdots\;
	\begin{tikzcd}[cramped,sep=tiny]
		{\overset{\theta_{m-1}+x_{m-1}}\bullet} && {\overset{y_{m-1}+\sum_{i = 0}^{m - 1}(\alpha_{i} \theta_{i})}\bullet} \\
		& {\underset{z_{m-1}}\bullet}
		\arrow[no head, from=1-1, to=2-2]
		\arrow[no head, from=1-3, to=2-2]
	\end{tikzcd}
	\]
	where $\phi \vcentcolon w_{i} \mapsto u_{i},\, \theta_{i} \mapsto v_{i}$.
	\pagebreak
	\newline
	This is indeed a well-defined surjection. The following is a $k$-linearly independent subset of $\ker(\phi)$: $\bigcup_{i=0}^{m-2}\left(\{\theta_{i}+x_{i},\, y_{i}+\theta_{i+1},\, z_{i}\}\right) \cup \{\theta_{m-1}+x_{m-1},\, y_{m-1}+\sum_{i = 0}^{m - 1}(\alpha_{i} \theta_{i}),\, z_{m-1}\}$. Notably, $\phi(\theta_{i}+x_{i}) = v_{i}+v_{i} = 0_{E_{f,n}}$ for $0 \leq i \leq m-1$, $\phi(y_{i}+\theta_{i+1}) = v_{i+1}+v_{i+1} = 0_{E_{f,n}}$ for $0 \leq i \leq m-2$ and lastly $\phi(y_{m-1}+\sum_{i = 0}^{m - 1}(\alpha_{i} \theta_{i})) = \sum_{i = 0}^{m - 1}(\alpha_{i} v_{i})+\sum_{i = 0}^{m - 1}(\alpha_{i} v_{i}) = 0_{E_{f,n}}$. This is a $k$-basis since $\dim(\ker(\phi)) = (m + 4m) - (2m) = 3m$; the size of this subset. The $kV_{4}$-action on this basis agrees with the above diagram, notably $a\left(\theta_{i}+x_{i}\right) = 0$ and $b\left(\theta_{i}+x_{i}\right) = z_{i}$ for $0 \leq i \leq m-1$, while $a\left(y_{i}+\theta_{i+1}\right) = z_{i}$ and $b\left(y_{i}+\theta_{i+1}\right) = 0$ for $0 \leq i \leq m-2$. Moreover, $a\big(y_{m-1}+\sum_{i = 0}^{m - 1}(\alpha_{i} \theta_{i})\big) = z_{m-1}$ and $b\big(y_{m-1}+\sum_{i = 0}^{m - 1}(\alpha_{i} \theta_{i})\big) = 0$. It follows that $\ker(\phi) \cong W_{3}^{\oplus m}$ so $\ppdim(\ker(\phi)) = 1$ and therefore $\ppdim\left(E_{f,n}\right) \leq 2$ by Remark \ref{rem:dim1ker}. The case $f = \infty$ is analagous, so we omit the argument. For completeness, the short exact sequence and corresponding diagrams are given below:
	\[\begin{tikzcd}[cramped,sep=small]
		0 & {W_{3}^{\oplus n}} & {kV_{4}^{\oplus n} \oplus k^{\oplus n}} & {E_{\infty,n}} & 0
		\arrow[from=1-1, to=1-2]
		\arrow[from=1-2, to=1-3]
		\arrow[from=1-3, to=1-4]
		\arrow[from=1-4, to=1-5]
	\end{tikzcd}\]
	\[\begin{array}{ccc}
		\begin{tikzcd}[cramped,sep=tiny]
			& {\overset{w_{0}}{\bullet}} \\
			{\overset{x_{0}}{\bullet}} && {\overset{y_{0}}{\bullet}} \\
			& {\overset{z_{0}}{\bullet}}
			\arrow[no head, from=1-2, to=2-1]
			\arrow[no head, from=1-2, to=2-3]
			\arrow[no head, from=2-1, to=3-2]
			\arrow[no head, from=2-3, to=3-2]
		\end{tikzcd}
		\hspace{2mm}
		\begin{tikzcd}[cramped,sep=tiny]
			{\overset{\theta_{0}}\bullet}
		\end{tikzcd}
		\;\;\cdots\;\;
		\begin{tikzcd}[cramped,sep=tiny]
			& {\overset{w_{i}}{\bullet}} \\
			{\overset{x_{i}}{\bullet}} && {\overset{y_{i}}{\bullet}} \\
			& {\overset{z_{i}}{\bullet}}
			\arrow[no head, from=1-2, to=2-1]
			\arrow[no head, from=1-2, to=2-3]
			\arrow[no head, from=2-1, to=3-2]
			\arrow[no head, from=2-3, to=3-2]
		\end{tikzcd}
		\hspace{2mm}
		\begin{tikzcd}[cramped,sep=tiny]
			{\overset{\theta_{i}}\bullet}
		\end{tikzcd}
		\;\;\cdots
		&
		\sur{\phi}
		&
		\begin{tikzcd}[cramped,sep=tiny]
			{\overset{u_{0}}\bullet} && {\overset{u_{1}}\bullet} && {} & {\overset{u_{n - 1}}\bullet} \\
			& {\underset{v_{0}}\bullet} && {\underset{v_{1}}\bullet} & {} && {\underset{v_{n - 1}}\bullet}
			\arrow[no head, from=1-1, to=2-2]
			\arrow[no head, from=1-3, to=2-2]
			\arrow[no head, from=1-3, to=2-4]
			\arrow["\ldots"{description, shift right,pos=0.4}, draw=none, from=1-5, to=2-5]
			\arrow[shorten >=12pt, no head, from=1-6, to=2-5]
			\arrow[no head, from=1-6, to=2-7]
			\arrow[shorten >=15pt, no head, from=2-4, to=1-5]
		\end{tikzcd}
	\end{array}\]
	%
	%
	%
	\[\ker(\phi) \,=\;
	\begin{tikzcd}[cramped,sep=tiny]
		{\overset{x_{0}}\bullet} && {\overset{y_{0}+\theta_{0}}\bullet} \\
		& {\underset{z_{0}}\bullet}
		\arrow[no head, from=1-1, to=2-2]
		\arrow[no head, from=1-3, to=2-2]
	\end{tikzcd}
	\hspace{3mm}
	\begin{tikzcd}[cramped,sep=tiny]
		{\overset{\theta_{0}+x_{1}}\bullet} && {\overset{y_{1}+\theta_{1}}\bullet} \\
		& {\underset{z_{1}}\bullet}
		\arrow[no head, from=1-1, to=2-2]
		\arrow[no head, from=1-3, to=2-2]
	\end{tikzcd}
	\;\cdots\;
	\begin{tikzcd}[cramped,sep=tiny]
		{\overset{\theta_{n-2}+x_{n-1}}\bullet} && {\overset{y_{n-1}+\theta_{n-1}}\bullet} \\
		& {\underset{z_{n-1}}\bullet}
		\arrow[no head, from=1-1, to=2-2]
		\arrow[no head, from=1-3, to=2-2]
	\end{tikzcd}
	\]
	where $\phi \vcentcolon w_{i} \mapsto u_{i},\, \theta_{i} \mapsto v_{i}$. Remark \ref{rem:dim1ker} then tells us that $\ppdim\left(E_{\infty,n}\right) \leq 2$.
\end{proof}

We have now shown that each indecomposable has permutation dimension at most two. Since any $kV_{4}$-module decomposes as a direct sum of indecomposables, it follows that $\ppdim_{k}(V_{4}) \leq 2$ by Remark \ref{rem:ppdimsum}. In Section \ref{sec:minim}, we establish the existence of $kV_{4}$-modules with permutation dimension equal to two, which confirms the following result:

\begin{thm}
\label{thm:glppdim}
	For a field $k$ of characteristic $p \defeq 2$, we have $\ppdim_{k}\left(V_{4}\right) = 2$.
\end{thm}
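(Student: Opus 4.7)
The preceding Remark \ref{rem:ppdimsum}, combined with Theorems \ref{thm:dim0} and \ref{thm:dim1} and Propositions \ref{prop:ppdimM}, \ref{prop:ppdimW}, and \ref{prop:ppdimE}, already establishes the upper bound $\ppdim_k(V_4) \leq 2$. So it suffices to exhibit a single $kV_4$-module $N$ with $\ppdim(N) \geq 2$, that is, one admitting no length-one permutation resolution.

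My plan is to fix a concrete candidate from the ``otherwise'' case of Theorem \ref{thm:main} --- for instance $N \defeq M_{9}$ --- and argue by contradiction. Suppose there existed a short exact sequence $0 \to K \to P \to N \to 0$ with both $P$ and $K$ permutation. By Theorem \ref{thm:dim0} we may write
\[
P \cong k^{\oplus c_{0}} \oplus E_{t}^{\oplus c_{1}} \oplus E_{t+1_{k}}^{\oplus c_{2}} \oplus E_{\infty}^{\oplus c_{3}} \oplus kV_{4}^{\oplus c_{4}},
\]
and analogously for $K$ with multiplicities $d_{0}, \ldots, d_{4} \in \N$. The strategy is to impose enough numerical constraints on these ten unknowns to preclude any solution. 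The main inputs are the dimension identity $\dim_{k} P = \dim_{k} K + \dim_{k} N$ and, for each $g \in \{a, b, a+b, ab\} \subseteq kV_{4}$, the rank inequality $\dim_{k}(gP) \geq \dim_{k}(gK) + \dim_{k}(gN)$, which follows from the observation that $gP$ surjects onto $gN$ with kernel containing $gK$. The four ranks on each indecomposable permutation module can be read off directly from the diagrams of Section \ref{sec:dim0}, and for $N = M_{9}$ one computes $\dim_{k} N = 9$, $\dim_{k}(aN) = \dim_{k}(bN) = \dim_{k}((a+b)N) = 4$, and $\dim_{k}(abN) = 0$. The target is to show the resulting system of Diophantine (in)equalities has no non-negative integer solution.

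The principal obstacle is the combinatorial book-keeping: the raw linear system is under-determined, so one must supplement the rank data with structural information. Two natural sharpenings stand out. First, the head comparison $\dim_{k} \hd(P) \geq \dim_{k} \hd(N)$ becomes an equality once one passes to an essential surjection via Proposition \ref{prop:esshd}, which pins down $\sum_{i} c_{i}$ and drastically reduces the parameter space. Second, duality (Recollection \ref{rec:dual}, Lemma \ref{lem:dual}) halves the work: since permutation modules are self-dual, a length-one permutation resolution of $N$ dualises to one of $N^{*}$, pairing $M_{2n+1}$ with $W_{2n+1}$ and leaving the even-dimensional $E_{f,n}$ invariant. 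I anticipate that the authors package this machinery in Section \ref{sec:minim} uniformly enough to simultaneously classify every indecomposable of permutation dimension two; Theorem \ref{thm:glppdim} is then immediate from the existence of any single such module.
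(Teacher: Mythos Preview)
Your overall architecture matches the paper's exactly: the upper bound is assembled from Remark~\ref{rem:ppdimsum} together with the resolutions of Section~\ref{sec:resol}, and the lower bound is deferred to Section~\ref{sec:minim}, where the existence of a single indecomposable of permutation dimension two (e.g.\ $M_{9}$, via Theorem~\ref{thm:ppdim2M}) suffices.

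However, the concrete mechanism you sketch for the lower bound---a Diophantine system in the multiplicities $c_i,d_i$ coming from rank inequalities $\dim_k(gP)\geq\dim_k(gK)+\dim_k(gN)$---is \emph{not} what the paper does, and your own caveat is well-founded: once you pass to an essential surjection to pin down $\sum_i c_i$, the kernel is no longer known to be a permutation module (only a \emph{submodule} of one; cf.\ Lemma~\ref{lem:essPerm}), so the decomposition of $K$ with multiplicities $d_i$ is lost and the count collapses. The paper sidesteps this entirely. Its argument runs through Heller shifts: after the essential-surjection reduction, the Snake Lemma (Lemma~\ref{lem:snake}) produces a short exact sequence $0\to P\to\Omega(M)\to N\to 0$ in which $P$ has summands only among $E_t,E_{t+1_k},E_\infty$ and $N$ embeds in a projective-free permutation module (Corollary~\ref{cor:SES}). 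The structural result Theorem~\ref{thm:pfsub}---that indecomposable submodules of projective-free permutation modules are confined to $k,E_t,E_{t+1_k},E_\infty,M_3,M_5$---then does the work your rank inequalities were aiming at, but qualitatively rather than numerically. For $M_{9}$ specifically, $\Omega(M_9)\cong M_7$ admits no $E_t,E_{t+1_k},E_\infty$ submodules (Remark~\ref{rem:Esub}), forcing $P=0$ and $M_7\cong N$, which contradicts Theorem~\ref{thm:pfsub}. Your duality observation is indeed used, but inside the proof of Theorem~\ref{thm:pfsub} rather than at the top level.
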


\section{Minimality}
\label{sec:minim}
We show that the indecomposables not listed in Theorems \ref{thm:dim0} and \ref{thm:dim1} have permutation dimension equal to two. Some technical results are required before we present the proof.

\subsection{Projective-Free Permutation Modules}
\label{sec:proj}
\mbox{}\\
A $kV_{4}$-module $N$ is called \emph{projective-free} if it has no non-zero projective summands, which exactly means that $kV_{4}$ does not occur as an indecomposable summand of $N$. It follows that a permutation module is projective-free if and only if it decomposes as a direct sum of copies of $k$, $E_{t}$, $E_{t+1_{k}}$ and $E_{\infty}$. Our goal in this subsection is to understand the possible submodules of projective-free permutation modules.

\begin{rem}
\label{rem:pfhom}
	Let $P$ be a projective-free permutation module and $\phi \vcentcolon P \to N$ a morphism of $kV_{4}$-modules. The image of $\phi$ is equal to the sum of the images of the indecomposable summands of $P$ under $\phi$. As discussed, these summands are (up to isomorphism) among $k$, $E_{t}$, $E_{t+1_{k}}$ and $E_{\infty}$. Since $a$ annihilates $E_{\infty}$, its image under $\phi$ must be contained in $\ker(a) \subseteq N$. Similarly, the images of $E_{t+1_{k}}$ and $E_{t}$ must be contained in $\ker(a+b)$ and $\ker(b)$ respectively. The image of $k$ will of course be contained in $\ker(a) \cap \ker(b)$. It follows that $\im(\phi) \subseteq \ker(a) + \ker(a+b) + \ker(b) \subseteq N$. It will be useful to compute $\ker(a) + \ker(a+b) + \ker(b)$ for each of the indecomposables. For the trivial module $k$ we have $\ker(a) = \ker(a+b) = \ker(b) = k$. For $kV_{4} \cong k[a,b]\,\big/\langle a^{2}, b^{2} \rangle$ one can easily check that $\ker(a) = \langle a \rangle$, $\ker(a+b) = \langle a+b \rangle$ and $\ker(b) = \langle b \rangle$, so their sum is $\langle a,b \rangle = \J(kV_{4})$.
\end{rem}

\begin{prop}
\label{prop:mker}
	Let $n \geq 1$. For $M_{2n+1}$ we have:
	\[
	\ker(a) + \ker(a+b) + \ker(b) = \vspan_{k}\{v_{i} \vcentcolon 0 \leq i \leq n\}
	\]
\end{prop}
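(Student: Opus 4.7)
The plan is to prove the two inclusions separately, with the forward one being essentially a routine linear-algebraic computation.

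First, I would note that the right-hand side $\vspan_{k}\{v_{i} \vcentcolon 0 \leq i \leq n\}$ is exactly the socle of $M_{2n+1}$: from the diagram, each $v_{i}$ is annihilated by both $a$ and $b$ (it sits at the bottom of the M-shape), hence also by $a+b$. So $v_{i} \in \ker(a) \cap \ker(a+b) \cap \ker(b)$ for every $i$, which immediately gives the containment
\[
\vspan_{k}\{v_{i} \vcentcolon 0 \leq i \leq n\} \subseteq \ker(a) + \ker(a+b) + \ker(b).
\]

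For the reverse containment, it suffices to show that each of the three individual kernels is already contained in $\vspan_{k}\{v_{i}\}$. I would write an arbitrary element as
\[
m = \sum_{i=1}^{n} \beta_{i} u_{i} + \sum_{i=0}^{n} \gamma_{i} v_{i}
\]
and compute $am = \sum_{i=1}^{n} \beta_{i} v_{i-1}$, $bm = \sum_{i=1}^{n} \beta_{i} v_{i}$, and $(a+b)m = \sum_{i=1}^{n} \beta_{i} (v_{i-1} + v_{i})$. Since $\{v_{0}, \ldots, v_{n}\}$ is $k$-linearly independent, $am = 0$ forces all $\beta_{i} = 0$, and likewise for $bm = 0$. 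For $(a+b)m = 0$, reading off coefficients of $v_{0}$, $v_{n}$, and each interior $v_{j}$ yields $\beta_{1} = 0$, $\beta_{n} = 0$, and $\beta_{j} + \beta_{j+1} = 0$ for $1 \leq j \leq n-1$; since $\Char(k) = 2$, these relations propagate to force all $\beta_{i} = 0$. In each case, $m$ reduces to $\sum_{i=0}^{n} \gamma_{i} v_{i}$, which lies in the claimed span.

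The only potential subtlety is the $(a+b)$ case, where one must be careful with indexing (the terms $v_{i-1}$ and $v_{i}$ overlap on the range $v_{1}, \ldots, v_{n-1}$), but nothing is genuinely hard. Combining both containments completes the proof.
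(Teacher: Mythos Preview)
Your proof is correct and follows essentially the same approach as the paper: both establish the stronger fact that each of $\ker(a)$, $\ker(a+b)$, $\ker(b)$ individually equals $\vspan_{k}\{v_{i}\}$. The only cosmetic difference is that the paper uses a dimension count via Rank-Nullity (exhibiting $n$ linearly independent elements in each image) rather than your direct coefficient computation, but the content is the same.
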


\begin{proof}
	\[\begin{tikzcd}[cramped,sep=tiny]
		& {\overset{u_{1}}\bullet} && {\overset{u_{2}}\bullet} && {} & {\overset{u_{n}}\bullet} \\
		{\underset{v_{0}}\bullet} && {\underset{v_{1}}\bullet} && {\underset{v_{2}}\bullet} & {} && {\underset{v_{n}}\bullet}
		\arrow[no head, from=1-2, to=2-1]
		\arrow[no head, from=1-2, to=2-3]
		\arrow[no head, from=1-4, to=2-3]
		\arrow[no head, from=1-4, to=2-5]
		\arrow["\ldots"{description, pos=0.4}, draw=none, from=1-6, to=2-6]
		\arrow[shorten >=10pt, no head, from=1-7, to=2-6]
		\arrow[no head, from=1-7, to=2-8]
		\arrow[shorten >=15pt, no head, from=2-5, to=1-6]
	\end{tikzcd}\]
	In fact, $\ker(a) = \ker(a+b) = \ker(b) = \vspan_{k}\{v_{i} \vcentcolon 0 \leq i \leq n\}$. Since $\{v_{i} \vcentcolon 0 \leq i \leq n\}$ is a $k$-linearly independent subset of $\ker(a)$, it suffices to show that $\dim_{k}(aM_{2n+1}) \geq n$ by the Rank-Nullity Theorem. This is clear, since $\{v_{i} \vcentcolon 0 \leq i \leq n-1\} \subseteq aM_{2n+1}$ is a $k$-linearly independent subset. A similar argument works for $\ker(a+b)$ and $\ker(b)$, noting that $\{v_{i-1} + v_{i} \vcentcolon 1 \leq i \leq n\} \subseteq (a+b)M_{2n+1}$ and $\{v_{i} \vcentcolon 1 \leq i \leq n\} \subseteq bM_{2n+1}$ are $k$-linearly independent subsets.
\end{proof}

\begin{prop}
\label{prop:wker}
	Let $n \geq 1$. For $W_{2n+1}$ we have:
	\[
	\ker(a) + \ker(a+b) + \ker(b) = \vspan_{k}\bigg(\{v_{i} \vcentcolon 1 \leq i \leq n\} \cup \big\{u_{0},\, \sum_{i=0}^{n}(u_{i}),\, u_{n}\big\}\bigg)
	\]
\end{prop}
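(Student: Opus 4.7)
The plan is to proceed in direct analogy with the proof of Proposition \ref{prop:mker}: I will compute each of $\ker(a)$, $\ker(a+b)$ and $\ker(b)$ explicitly using the $k$-basis of $W_{2n+1}$, then add the three subspaces. Unlike in the M-shaped case, I do \emph{not} expect these three kernels to coincide; the point of the statement is precisely that each contributes one extra top-row element beyond the common socle.

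Concretely, I would first read off the action from the W-shaped diagram: $au_0 = 0$ and $au_i = v_i$ for $1 \leq i \leq n$; $bu_n = 0$ and $bu_i = v_{i+1}$ for $0 \leq i \leq n-1$; and the $v_j$ are all in the socle. Writing a general element as $m = \sum_{i=0}^{n} c_i u_i + \sum_{j=1}^{n} d_j v_j$, one computes $am = \sum_{i=1}^{n} c_i v_i$, $bm = \sum_{i=0}^{n-1} c_i v_{i+1}$, and $(a+b)m = \sum_{i=1}^{n} (c_{i-1} + c_i) v_i$. Since $\{v_1, \ldots, v_n\}$ is $k$-linearly independent, setting these to zero gives: $c_1 = \cdots = c_n = 0$ for $\ker(a)$; $c_0 = \cdots = c_{n-1} = 0$ for $\ker(b)$; and $c_{i-1} = c_i$ for all $i$, i.e.\ all $c_i$ equal, for $\ker(a+b)$ (here I use $\Char(k) = 2$). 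The $d_j$'s are unconstrained in all three cases.

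This yields $\ker(a) = \vspan_{k}\{u_0\} + \vspan_{k}\{v_i \vcentcolon 1 \leq i \leq n\}$, $\ker(b) = \vspan_{k}\{u_n\} + \vspan_{k}\{v_i \vcentcolon 1 \leq i \leq n\}$, and $\ker(a+b) = \vspan_{k}\{\sum_{i=0}^{n} u_i\} + \vspan_{k}\{v_i \vcentcolon 1 \leq i \leq n\}$. Summing the three immediately recovers the claimed right-hand side.

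There is no real obstacle here; the argument is a routine linear-algebra calculation, and the only delicate point is the characteristic $2$ reduction that collapses the $\ker(a+b)$ condition $c_{i-1} + c_i = 0$ to the equality $c_{i-1} = c_i$. The proof is essentially a mirrored version of the one for $M_{2n+1}$, with the asymmetry of $W_{2n+1}$ accounting for the three distinct top-row contributions $u_0$, $\sum_{i=0}^{n} u_i$ and $u_n$.
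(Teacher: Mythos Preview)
Your proof is correct and follows essentially the same approach as the paper: both compute each of $\ker(a)$, $\ker(a+b)$, $\ker(b)$ explicitly and then sum them. The only cosmetic difference is that the paper verifies each kernel by exhibiting $n+1$ independent elements and then bounding the dimension via Rank--Nullity (showing $\dim aW_{2n+1},\dim bW_{2n+1},\dim (a+b)W_{2n+1}\geq n$), whereas you solve the linear system for a general element directly; the resulting descriptions of the three kernels are identical.
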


\begin{proof}
	\[\begin{tikzcd}[cramped,sep=tiny]
		{\overset{u_{0}}\bullet} && {\overset{u_{1}}\bullet} && {\overset{u_{2}}\bullet} & {} && {\overset{u_{n}}\bullet} \\
		& {\underset{v_{1}}\bullet} && {\underset{v_{2}}\bullet} && {} & {\underset{v_{n}}\bullet}
		\arrow[no head, from=1-1, to=2-2]
		\arrow[no head, from=1-3, to=2-2]
		\arrow[no head, from=1-3, to=2-4]
		\arrow[shorten >=10pt, no head, from=1-5, to=2-6]
		\arrow["\ldots"{description, pos=0.4}, draw=none, from=1-6, to=2-6]
		\arrow[no head, from=1-8, to=2-7]
		\arrow[no head, from=2-4, to=1-5]
		\arrow[shorten >=15pt, no head, from=2-7, to=1-6]
	\end{tikzcd}\]
	We have $\{v_{i} \vcentcolon 1 \leq i \leq n\} \subseteq \ker(a) \cap \ker(a+b) \cap \ker(b)$, $u_{0} \in \ker(a)$, $\sum_{i=0}^{n}(u_{i}) \in \ker(a+b)$ and $u_{n} \in \ker(b)$. Notably, $(a+b)\sum_{i=0}^{n}(u_{i}) = \sum_{i=1}^{n}(v_{i}) + \sum_{i=1}^{n}(v_{i}) = 0_{W_{2n+1}}$. We will show that the $k$-dimensions of $aW_{2n+1}$, $(a+b)W_{2n+1}$ and $bW_{2n+1}$ are at least $n$. It then follows by the Rank-Nullity Theorem that $\ker(a)$, $\ker(a+b)$ and $\ker(b)$ are respectively equal to $\vspan_{k}\big(\{v_{i} \vcentcolon 1 \leq i \leq n\} \cup \{u_{0}\}\big)$, $\vspan_{k}\big(\{v_{i} \vcentcolon 1 \leq i \leq n\} \cup \big\{\sum_{i=0}^{n}(u_{i})\big\}\big)$ and $\vspan_{k}\big(\{v_{i} \vcentcolon 1 \leq i \leq n\} \cup \{u_{n}\}\big)$. Firstly, $\{v_{i} \vcentcolon 1 \leq i \leq n\}$ is a $k$-linearly independent subset of both $aW_{2n+1}$ and $bW_{2n+1}$. For $(a+b)W_{2n+1}$, note that $(a+b)\{u_{i} \vcentcolon 1 \leq i \leq n\} \linebreak = \{v_{i} + v_{i+1} \vcentcolon 1 \leq i \leq n-1\} \cup \{v_{n}\}$ is a $k$-linearly independent subset, so we are done.
\end{proof}

\begin{prop}
\label{prop:etker}
	Let $n \geq 1$. For $E_{t,n}$ we have:
	\[
	\ker(a) + \ker(a+b) + \ker(b) = \vspan_{k}\big(\{v_{i} \vcentcolon 0 \leq i \leq n-1\} \cup \{u_{n-1}\}\big)
	\]
\end{prop}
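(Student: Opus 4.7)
The plan is to mirror the strategy used in Propositions \ref{prop:mker} and \ref{prop:wker}: exhibit the claimed spanning set explicitly inside each of the three kernels, then apply Rank–Nullity to upgrade these containments to equalities by bounding the $k$-dimensions of $aE_{t,n}$, $(a+b)E_{t,n}$ and $bE_{t,n}$ from below.

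Recall that in $E_{t,n}$ we have $au_{i} = v_{i}$ for $0 \leq i \leq n-1$, $bu_{i} = v_{i+1}$ for $0 \leq i \leq n-2$, and $bu_{n-1} = 0_{E_{t,n}}$, while $av_{i} = bv_{i} = 0_{E_{t,n}}$ for every $i$. First I would observe that $\{v_{i} \vcentcolon 0 \leq i \leq n-1\} \subseteq \ker(a) \cap \ker(a+b) \cap \ker(b)$ and $u_{n-1} \in \ker(b)$. So the right-hand side is contained in the left-hand side. For the reverse containment, I would pin down each kernel separately.

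Next I would note three $k$-linearly independent subsets of the respective images, each of which is easy to read off the diagram: $\{v_{i} \vcentcolon 0 \leq i \leq n-1\} \subseteq aE_{t,n}$ (via $au_{i} = v_{i}$), $\{v_{i} \vcentcolon 1 \leq i \leq n-1\} \subseteq bE_{t,n}$ (via $bu_{i-1} = v_{i}$), and $\{v_{i-1}+v_{i} \vcentcolon 1 \leq i \leq n-1\} \cup \{v_{n-1}\} \subseteq (a+b)E_{t,n}$ (via $(a+b)u_{i} = v_{i}+v_{i+1}$ and $(a+b)u_{n-1}=v_{n-1}$); the last set is linearly independent by a triangular argument on the $v_{i}$. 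Combined with $\dim_{k}(E_{t,n}) = 2n$, Rank–Nullity then gives $\dim_{k}\ker(a) \leq n$, $\dim_{k}\ker(a+b) \leq n$, and $\dim_{k}\ker(b) \leq n+1$, which match the sizes of the spanning sets already exhibited inside each kernel. Hence $\ker(a) = \ker(a+b) = \vspan_{k}\{v_{i} \vcentcolon 0 \leq i \leq n-1\}$ and $\ker(b) = \vspan_{k}\big(\{v_{i} \vcentcolon 0 \leq i \leq n-1\} \cup \{u_{n-1}\}\big)$, and summing the three kernels yields the claim.

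I do not foresee a real obstacle: the only point that requires any care is the linear independence of $\{v_{i-1}+v_{i}\} \cup \{v_{n-1}\}$ inside $(a+b)E_{t,n}$, and this is immediate from reading the coefficients off in reverse order. Everything else is bookkeeping parallel to the proofs of Propositions \ref{prop:mker} and \ref{prop:wker}.
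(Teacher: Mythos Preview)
Your proposal is correct and follows essentially the same argument as the paper: exhibit the obvious elements in each kernel, then use Rank--Nullity together with the same explicit linearly independent subsets of $aE_{t,n}$, $(a+b)E_{t,n}$ and $bE_{t,n}$ to pin down the kernels exactly. The only differences are cosmetic (your indexing of $\{v_{i-1}+v_{i}\}$ versus the paper's $\{v_{i}+v_{i+1}\}$, and your explicit remark about the triangular independence check).
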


\begin{proof}
	\[\begin{tikzcd}[cramped,sep=tiny]
		& {\overset{u_{0}}\bullet} && {\overset{u_{1}}\bullet} & {} && {\overset{u_{n - 1}}\bullet} \\
		{\underset{v_{0}}\bullet} && {\underset{v_{1}}\bullet} && {} & {\underset{v_{n - 1}}\bullet}
		\arrow[no head, from=1-2, to=2-1]
		\arrow[no head, from=1-2, to=2-3]
		\arrow[no head, from=1-4, to=2-3]
		\arrow[shorten >=10pt, no head, from=1-4, to=2-5]
		\arrow["\ldots"{description, pos=0.4}, draw=none, from=1-5, to=2-5]
		\arrow[no head, from=1-7, to=2-6]
		\arrow[shorten >=10pt, no head, from=2-6, to=1-5]
	\end{tikzcd}\]
	Note that $\{v_{i} \vcentcolon 0 \leq i \leq n-1\} \subseteq \ker(a) \cap \ker(a+b) \cap \ker(b)$ and $u_{n-1} \in \ker(b)$. We will show that $\dim_{k}(aE_{t,n}),\, \dim_{k}((a+b)E_{t,n}) \geq n$ and $\dim_{k}(bE_{t,n}) \geq n-1$, therefore $\ker(a) = \ker(a+b) = \vspan_{k}\{v_{i} \vcentcolon 0 \leq i \leq n-1\}$ and $\ker(b) = \vspan_{k}(\{v_{i} \vcentcolon 0 \leq i \leq n-1\} \cup \{u_{n-1}\})$. We have the following $k$-linearly independent subsets: $\{v_{i} \vcentcolon 0 \leq i \leq n-1\} \subseteq aE_{t,n}$, $\{v_{i} + v_{i+1} \vcentcolon 0 \leq i \leq n-2\} \cup \{v_{n-1}\} \subseteq (a+b)E_{t,n}$ and $\{v_{i} \vcentcolon 1 \leq i \leq n-1\} \subseteq bE_{t,n}$.
\end{proof}

\begin{prop}
\label{prop:eiker}
	Let $n \geq 1$. For $E_{\infty,n}$ we have:
	\[
	\ker(a) + \ker(a+b) + \ker(b) = \vspan_{k}\big(\{v_{i} \vcentcolon 0 \leq i \leq n-1\} \cup \{u_{0}\}\big)
	\]
\end{prop}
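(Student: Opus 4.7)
The plan is to mirror the argument of Proposition \ref{prop:etker} exactly, accounting for the fact that the diagram for $E_{\infty,n}$ is the ``reflected'' version of that for $E_{t,n}$. Concretely, I would start by reading off the $kV_{4}$-action from the diagram: we have $av_{i}=bv_{i}=0$ for each $0\leq i\leq n-1$, $bu_{i}=v_{i}$ for $0\leq i\leq n-1$, $au_{0}=0$, and $au_{i}=v_{i-1}$ for $1\leq i\leq n-1$.

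From this one sees immediately that $\{v_{i}\vcentcolon 0\leq i\leq n-1\}\subseteq\ker(a)\cap\ker(a+b)\cap\ker(b)$ and that $u_{0}\in\ker(a)$, which gives the containment $\supseteq$ in the claimed equality. For the reverse inclusion, I would use Rank--Nullity by bounding the dimensions of the images. Specifically, I would exhibit three $k$-linearly independent subsets:
\[
\{v_{i}\vcentcolon 0\leq i\leq n-2\}\subseteq aE_{\infty,n},\qquad \{v_{0}\}\cup\{v_{i-1}+v_{i}\vcentcolon 1\leq i\leq n-1\}\subseteq (a+b)E_{\infty,n},\qquad \{v_{i}\vcentcolon 0\leq i\leq n-1\}\subseteq bE_{\infty,n},
\]
witnessing that $\dim_{k}(aE_{\infty,n})\geq n-1$ and $\dim_{k}((a+b)E_{\infty,n}),\dim_{k}(bE_{\infty,n})\geq n$.

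Invoking Rank--Nullity on the module $E_{\infty,n}$, which has $k$-dimension $2n$, it follows that $\dim_{k}(\ker(a))\leq n+1$, $\dim_{k}(\ker(a+b))\leq n$ and $\dim_{k}(\ker(b))\leq n$. Combined with the containments above, this forces $\ker(a)=\vspan_{k}\big(\{v_{i}\vcentcolon 0\leq i\leq n-1\}\cup\{u_{0}\}\big)$ and $\ker(a+b)=\ker(b)=\vspan_{k}\{v_{i}\vcentcolon 0\leq i\leq n-1\}$. Taking the sum of these three submodules yields the claimed equality.

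There is no substantive obstacle here; the argument is purely computational and entirely parallel to Propositions \ref{prop:mker}, \ref{prop:wker} and especially \ref{prop:etker}. The only minor point to be careful about is which basis element of $\{u_{i}\}$ lies in which kernel: since the $b$-arrows now descend south-east from every $u_{i}$ (none is annihilated by $b$), the ``free'' basis element ends up being $u_{0}\in\ker(a)$ rather than $u_{n-1}\in\ker(b)$ as in the $E_{t,n}$ case.
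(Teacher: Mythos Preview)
Your proposal is correct and is precisely the analogous argument the paper intends; the paper's own proof consists only of the diagram and the single line ``The argument is analogous to Proposition \ref{prop:etker},'' and you have spelled out exactly that analogy with the roles of $a$ and $b$ (and hence $u_{0}$ versus $u_{n-1}$) swapped.
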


\begin{proof}
	\[\begin{tikzcd}[cramped,sep=tiny]
		{\overset{u_{0}}\bullet} && {\overset{u_{1}}\bullet} && {} & {\overset{u_{n - 1}}\bullet} \\
		& {\underset{v_{0}}\bullet} && {\underset{v_{1}}\bullet} & {} && {\underset{v_{n - 1}}\bullet}
		\arrow[no head, from=1-1, to=2-2]
		\arrow[no head, from=1-3, to=2-2]
		\arrow[no head, from=1-3, to=2-4]
		\arrow["\ldots"{description, pos=0.4}, draw=none, from=1-5, to=2-5]
		\arrow[shorten >=11pt, no head, from=1-6, to=2-5]
		\arrow[no head, from=1-6, to=2-7]
		\arrow[shorten >=15pt, no head, from=2-4, to=1-5]
	\end{tikzcd}\]
	The argument is analogous to Proposition \ref{prop:etker}.
\end{proof}

\begin{prop}
\label{prop:et1ker}
	Let $n \geq 1$. Then $(t+1_{k})^{n} = t^{n} + \sum_{i=0}^{n-1}(\alpha_{i}t^{i})$ for some $\alpha_{i} \in k$. \newline Set $\gamma_{i} \defeq \sum_{j=0}^{i}(\alpha_{j})$ and note that $\gamma_{0} = \alpha_{0} = 1_{k}$. For $E_{(t+1_{k}),n}$ we have:
	\[
	\ker(a) + \ker(a+b) + \ker(b) = \vspan_{k}\bigg(\{v_{i} \vcentcolon 0 \leq i \leq n-1\} \cup \big\{\sum_{i=0}^{n-1}(\gamma_{i}u_{i})\big\}\bigg)
	\]
\end{prop}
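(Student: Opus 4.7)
My plan is to follow the same template as Propositions \ref{prop:mker}--\ref{prop:eiker}: read off the $kV_{4}$-action on the standard basis from the diagram, exhibit explicit elements lying in each of $\ker(a)$, $\ker(a+b)$, $\ker(b)$, and close the argument with rank-nullity dimension counts. The action here is $au_{i} = v_{i}$ for $0 \leq i \leq n-1$, $bu_{i} = v_{i+1}$ for $0 \leq i \leq n-2$, and $bu_{n-1} = \sum_{i=0}^{n-1}(\alpha_{i}v_{i})$; the $v_{i}$ are annihilated by all of $a$, $b$ and $a+b$. So $\vspan_{k}\{v_{i} \vcentcolon 0 \leq i \leq n-1\}$ sits inside each kernel from the outset.

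The first easy step is to dispose of $\ker(a)$ and $\ker(b)$ individually. The image $aE_{(t+1_{k}),n}$ already contains the linearly independent set $\{v_{i} \vcentcolon 0 \leq i \leq n-1\}$, forcing $\dim_{k}(aE_{(t+1_{k}),n}) \geq n$ and hence $\dim_{k}\ker(a) \leq n$, which pins down $\ker(a) = \vspan_{k}\{v_{i}\}$. For $b$, the images $bu_{0},\ldots,bu_{n-2}$ give $v_{1},\ldots,v_{n-1}$, and because $\alpha_{0} = 1_{k}$ one can recover $v_{0}$ from $bu_{n-1}$ modulo earlier $bu_{j}$'s, yielding the same lower bound and the same conclusion $\ker(b) = \vspan_{k}\{v_{i}\}$.

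The substantive content is the $\ker(a+b)$ computation. I would verify directly that $\sum_{i=0}^{n-1}(\gamma_{i}u_{i})$ lies in $\ker(a+b)$ by expanding
\[(a+b)\sum_{i=0}^{n-1}(\gamma_{i}u_{i}) \,=\, \sum_{i=0}^{n-1}(\gamma_{i}v_{i}) + \sum_{i=1}^{n-1}(\gamma_{i-1}v_{i}) + \gamma_{n-1}\sum_{i=0}^{n-1}(\alpha_{i}v_{i}),\]
using $\gamma_{i}+\gamma_{i-1} = \alpha_{i}$ to collect the coefficient of $v_{i}$ (for $i \geq 1$) as $(1_{k}+\gamma_{n-1})\alpha_{i}$, and the coefficient of $v_{0}$ as $1_{k}+\gamma_{n-1}$. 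The crux is then the identity $\gamma_{n-1} = 1_{k}$, which I would extract by evaluating the defining relation $(t+1_{k})^{n} = t^{n} + \sum_{i=0}^{n-1}(\alpha_{i}t^{i})$ at $t = 1_{k}$: the left side is $(1_{k}+1_{k})^{n} = 0_{k}$ by $\Char(k)=2$, while the right side is $1_{k} + \gamma_{n-1}$, so $\gamma_{n-1} = 1_{k}$. This is the main (and only) obstacle; once it is in hand, the computation collapses to zero and shows $\sum\gamma_{i}u_{i} \in \ker(a+b)$.

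To close the proof I need a matching upper bound on $\dim_{k}\ker(a+b)$. The set $\{(a+b)u_{i} = v_{i}+v_{i+1} \vcentcolon 0 \leq i \leq n-2\}$ is $k$-linearly independent in $(a+b)E_{(t+1_{k}),n}$ by a triangular argument, so the image has dimension at least $n-1$ and the kernel has dimension at most $n+1$ by rank-nullity. The $n+1$ explicit elements $\{v_{0},\ldots,v_{n-1}\} \cup \{\sum_{i=0}^{n-1}(\gamma_{i}u_{i})\}$ are manifestly $k$-linearly independent (the last involves the $u_{i}$'s nontrivially since $\gamma_{n-1} = 1_{k}$), so they form a basis of $\ker(a+b)$. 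Summing the three kernels gives exactly the claimed span, completing the proof.
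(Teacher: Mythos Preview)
Your proposal is correct and follows essentially the same approach as the paper: both pin down $\ker(a)$ and $\ker(b)$ via rank-nullity using the same image bases, both bound $\dim_{k}\ker(a+b)$ via the independent set $\{v_{i}+v_{i+1}\}$, and both rely on the identity $\gamma_{n-1}=1_{k}$ (obtained by evaluating $(t+1_{k})^{n}$ at $t=1_{k}$) to verify that $\sum\gamma_{i}u_{i}$ lies in $\ker(a+b)$. The only cosmetic difference is that the paper computes $a\sum\gamma_{i}u_{i}$ and $b\sum\gamma_{i}u_{i}$ separately and shows they coincide, whereas you expand $(a+b)\sum\gamma_{i}u_{i}$ directly and collect coefficients as $(1_{k}+\gamma_{n-1})\alpha_{i}$; these are the same computation organised differently.
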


\begin{proof}
	\[\begin{tikzcd}[cramped,sep=tiny]
		& {\overset{u_{0}}\bullet} && {\overset{u_{1}}\bullet} & {} && {\overset{u_{n - 1}}\bullet} \\
		{\underset{v_{0}}\bullet} && {\underset{v_{1}}\bullet} && {} & {\underset{v_{n - 1}}\bullet} && {\underset{\sum_{i = 0}^{n - 1}(\alpha_{i} v_{i})}{\circ}}
		\arrow[no head, from=1-2, to=2-1]
		\arrow[no head, from=1-2, to=2-3]
		\arrow[no head, from=1-4, to=2-3]
		\arrow[shorten >=18pt, no head, from=1-4, to=2-5]
		\arrow["\ldots"{description, pos=0.4}, draw=none, from=1-5, to=2-5]
		\arrow[no head, from=1-7, to=2-6]
		\arrow[dashed, no head, from=1-7, to=2-8]
		\arrow[shorten >=18pt, no head, from=2-6, to=1-5]
	\end{tikzcd}\]
	Note that $\{v_{i} \vcentcolon 0 \leq i \leq n-1\} \subseteq \ker(a) \cap \ker(a+b) \cap \ker(b)$. Also, we claim that $\sum_{i=0}^{n-1}(\gamma_{i}u_{i}) \in \ker(a+b)$. We will show that $\dim_{k}(aE_{(t+1_{k}),n}),\, \dim_{k}((bE_{(t+1_{k}),n}) \geq n$ and $\dim_{k}((a+b)E_{(t+1_{k}),n}) \geq n-1$, therefore $\ker(a) = \ker(b) = \vspan_{k}\{v_{i} \vcentcolon 0 \leq i \leq n-1\}$ and $\ker(a+b) = \vspan_{k}(\{v_{i} \vcentcolon 0 \leq i \leq n-1\} \cup \{\sum_{i=0}^{n-1}(\gamma_{i}u_{i})\})$. We have the following $k$-linearly independent subsets: $\{v_{i} \vcentcolon 0 \leq i \leq n-1\} \subseteq aE_{(t+1_{k}),n}$, $\{v_{i} \vcentcolon 1 \leq i \leq \nolinebreak n-1\} \cup \{\sum_{i = 0}^{n - 1}(\alpha_{i}v_{i})\} \subseteq bE_{(t+1_{k}),n}$ and $\{v_{i} + v_{i+1} \vcentcolon 0 \leq i \leq n-2\} \subseteq (a+b)E_{(t+1_{k}),n}$.
	\begin{spacing}{1.3}
		It remains to show that $(a+b)\sum_{i=0}^{n-1}(\gamma_{i}u_{i}) = 0_{E_{(t+1_{k}),n}}$. Observe that $\gamma_{i-1} + \alpha_{i} = \gamma_{i}$ for $1 \leq i \leq n-1$ and $\gamma_{n-1} = \sum_{j=0}^{n-1}(\alpha_{j}) = 1_{k}$, because $0_{k} = (1_{k}+1_{k})^{n} = 1_{k} + \sum_{i=0}^{n-1}(\alpha_{i})$. Clearly $a\sum_{i=0}^{n-1}(\gamma_{i}u_{i}) = \sum_{i=0}^{n-1}(\gamma_{i}v_{i})$, so we must show that $b\sum_{i=0}^{n-1}(\gamma_{i}u_{i}) = \sum_{i=0}^{n-1}(\gamma_{i}v_{i})$. Now, $b\sum_{i=0}^{n-1}(\gamma_{i}u_{i}) = \sum_{i=0}^{n-2}(\gamma_{i}v_{i+1}) + \gamma_{n-1}\sum_{i = 0}^{n - 1}(\alpha_{i} v_{i}) = \sum_{i=1}^{n-1}(\gamma_{i-1}v_{i}) + \sum_{i = 0}^{n - 1}(\alpha_{i} v_{i}) = \sum_{i=1}^{n-1}((\gamma_{i-1} + \alpha_{i})v_{i}) + \alpha_{0} v_{0} = \sum_{i=1}^{n-1}(\gamma_{i}v_{i}) + \gamma_{0}v_{0} = \sum_{i=0}^{n-1}(\gamma_{i}v_{i})$, so we are done.
		\qedhere
	\end{spacing}
	\vspace{-0.2\baselineskip}
\end{proof}

\begin{prop}
\label{prop:efker}
	Let $n \geq 1$ and $f \in k[t]$ a monic irreducible polynomial. Note that $f^{n} = t^{m} + \sum_{i = 0}^{m - 1}(\alpha_{i} t^{i})$ for some $\alpha_{i} \in k$, where $m \defeq n \deg(f)$. Suppose that $f \notin \{t,\, t+1_{k}\}$. For $E_{f,n}$ we have:
	\[
	\ker(a) + \ker(a+b) + \ker(b) = \vspan_{k}\{v_{i} \vcentcolon 0 \leq i \leq m-1\}
	\]
\end{prop}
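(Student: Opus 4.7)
The argument will parallel the structure of Propositions \ref{prop:etker}, \ref{prop:eiker}, and \ref{prop:et1ker}. The inclusion $\vspan_k\{v_i \vcentcolon 0 \leq i \leq m-1\} \subseteq \ker(a) \cap \ker(a+b) \cap \ker(b)$ is immediate from the diagram, since every $v_i$ is annihilated by both $a$ and $b$. For the reverse direction, I would show that \emph{each} of the three kernels is exactly $\vspan_k\{v_i \vcentcolon 0 \leq i \leq m-1\}$; their sum then coincides with this subspace as well. Since $\dim_k(E_{f,n}) = 2m$ and each of $aE_{f,n}$, $bE_{f,n}$, $(a+b)E_{f,n}$ is clearly contained in $\vspan_k\{v_i \vcentcolon 0 \leq i \leq m-1\}$ (both $a$ and $b$ send the top row into the bottom row and annihilate the bottom row), the Rank-Nullity Theorem reduces us to showing that each of $a$, $b$, $a+b$ acts surjectively onto this $m$-dimensional bottom row.

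The cleanest way to establish this is through the identifications $\vspan_k\{u_i \vcentcolon 0 \leq i \leq m-1\} \cong k[t]\big/\langle f^n \rangle \cong \vspan_k\{v_i \vcentcolon 0 \leq i \leq m-1\}$ sending $u_i$ and $v_i$ to $t^i$. Under these identifications, $a$ corresponds to the identity map (since $au_i = v_i$ for all $i$), while $b$ corresponds to multiplication by $t$: for $0 \leq i \leq m-2$ we have $bu_i = v_{i+1}$, and the relation $bu_{m-1} = \sum_{i=0}^{m-1}(\alpha_i v_i)$ matches $t \cdot t^{m-1} = t^m = \sum_{i=0}^{m-1}(\alpha_i t^i)$ in $k[t]\big/\langle f^n \rangle$ (using $f^n = t^m + \sum_{i=0}^{m-1}(\alpha_i t^i)$ together with $\Char(k)=2$). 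Hence $a+b$ corresponds to multiplication by $t + 1_k$. Because $f$ is irreducible with $f \notin \{t, t+1_k\}$, both $t$ and $t+1_k$ are coprime to $f^n$, so multiplication by each is an automorphism of $k[t]\big/\langle f^n \rangle$. It follows that $a$, $b$, and $a+b$ all restrict to surjections $\vspan_k\{u_i\} \to \vspan_k\{v_i\}$, so $\dim_k(cE_{f,n}) = m$ for $c \in \{a, b, a+b\}$, completing the argument.

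The main obstacle is largely bookkeeping: verifying that the given action of $b$ on the top basis of $E_{f,n}$ does translate to multiplication by $t$ in $k[t]\big/\langle f^n \rangle$ once the characteristic-$2$ sign is absorbed. Aside from that identification, the algebra is routine.
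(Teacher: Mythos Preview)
Your proof is correct and follows the same overall skeleton as the paper (show that each of $\ker(a)$, $\ker(b)$, $\ker(a+b)$ equals the bottom row, via Rank--Nullity), but the way you establish surjectivity onto the bottom row is genuinely different. The paper writes down explicit $m$-element subsets of $aE_{f,n}$, $bE_{f,n}$, $(a+b)E_{f,n}$ and checks linear independence by hand; the $(a+b)$ case in particular requires a somewhat delicate coefficient comparison that eventually reduces to $f^n(1_k)\neq 0_k$. Your identification of the top and bottom rows with $k[t]/\langle f^n\rangle$, under which $a$, $b$, $a+b$ become multiplication by $1_k$, $t$, $t+1_k$, replaces all of that with a one-line coprimality argument: since $f$ is irreducible and $f\notin\{t,\,t+1_k\}$, both $t$ and $t+1_k$ are units in $k[t]/\langle f^n\rangle$. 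This is cleaner and makes the role of the hypothesis $f\notin\{t,\,t+1_k\}$ completely transparent, at the modest cost of having to check that the $b$-action really does match multiplication by $t$ (which you do). The paper's approach, by contrast, stays entirely within elementary linear algebra and requires no recognition of the ambient ring structure.
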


\begin{proof}
	\[\begin{tikzcd}[cramped,sep=tiny]
		& {\overset{u_{0}}\bullet} && {\overset{u_{1}}\bullet} & {} && {\overset{u_{m - 1}}\bullet} \\
		{\underset{v_{0}}\bullet} && {\underset{v_{1}}\bullet} && {} & {\underset{v_{m - 1}}\bullet} && {\underset{\sum_{i = 0}^{m - 1}(\alpha_{i} v_{i})}{\circ}}
		\arrow[no head, from=1-2, to=2-1]
		\arrow[no head, from=1-2, to=2-3]
		\arrow[no head, from=1-4, to=2-3]
		\arrow[shorten >=10pt, no head, from=1-4, to=2-5]
		\arrow["\ldots"{description, pos=0.4}, draw=none, from=1-5, to=2-5]
		\arrow[no head, from=1-7, to=2-6]
		\arrow[dashed, no head, from=1-7, to=2-8]
		\arrow[shorten >=10pt, no head, from=2-6, to=1-5]
	\end{tikzcd}\]
	We will in fact show that $\ker(a) = \ker(a+b) = \ker(b) = \vspan_{k}\{v_{i} \vcentcolon 0 \leq i \leq m-1\}$. Since $\{v_{i} \vcentcolon 0 \leq i \leq m-1\} \subseteq \ker(a) \cap \ker(a+b) \cap \ker(b)$ is a $k$-linearly independent subset, it suffices to show that the $k$-dimensions of $aE_{f,n}$, $(a+b)E_{f,n}$ and $bE_{f,n}$ are at least $m$, by the Rank-Nullity Theorem. We have the following $k$-linearly independent subsets: $\{v_{i} \vcentcolon 0 \leq i \leq m-1\} \subseteq aE_{f,n}$, $\{v_{i} \vcentcolon 1 \leq i \leq m-1\} \cup \{\sum_{i = 0}^{m - 1}(\alpha_{i}v_{i})\} \subseteq bE_{f,n}$ and $\{v_{i} + v_{i+1} \vcentcolon 0 \leq i \leq m-2\} \cup \{v_{m-1} + \sum_{i = 0}^{m - 1}(\alpha_{i} v_{i})\} \subseteq (a+b)E_{f,n}$. This is straightforward to verify for the first two (note that $\alpha_{0} \neq 0_{k}$ since $f \neq t$ is irreducible) but the third is a little more involved. If $\sum_{i=0}^{m-2}(\lambda_{i}(v_{i} + v_{i+1})) + \lambda_{m-1}(v_{m-1} + \sum_{i = 0}^{m - 1}(\alpha_{i} v_{i})) = 0_{E_{f,n}}$ for some $\lambda_{i} \in k$, then $\sum_{i=0}^{m-2}(\lambda_{i}v_{i}) + \sum_{i=1}^{m-1}(\lambda_{i-1}v_{i}) + \lambda_{m-1}v_{m-1} = \lambda_{m-1}\sum_{i = 0}^{m - 1}(\alpha_{i} v_{i})$ and therefore $\lambda_{0}v_{0} + \sum_{i=1}^{m-1}((\lambda_{i}+\lambda_{i-1})v_{i}) = \lambda_{m-1}\sum_{i = 0}^{m - 1}(\alpha_{i} v_{i})$. Comparing coefficients gives $\lambda_{0} = \lambda_{m-1}\alpha_{0}$ and $\lambda_{i} = \lambda_{i-1}+\lambda_{m-1}\alpha_{i}$ for $1 \leq i \leq m-1$. It follows that $\lambda_{i} = \lambda_{m-1}\sum_{j=0}^{i}(\alpha_{j})$ for $0 \leq i \leq m-1$. In particular, we have $\lambda_{m-1} = \lambda_{m-1}\sum_{j=0}^{m-1}(\alpha_{j})$, so either $\lambda_{m-1} = 0_{k}$ or $\sum_{j=0}^{m-1}(\alpha_{j}) = 1_{k}$. In the latter case, we have $f^{n}(1_{k}) = 1_{k}^{m} + \sum_{i = 0}^{m - 1}(\alpha_{i} 1_{k}^{i}) = 0_{k}$, so $(t+1_{k})$ is a factor of $f^{n}$ and also $f$. Since $f$ is monic and irreducible, we must have $f = t+1_{k}$, but this contradicts our assumption. Therefore $\lambda_{m-1} = 0_{k}$ and $\lambda_{i} = \lambda_{m-1}\sum_{j=0}^{i}(\alpha_{j}) = 0_{k}$ for $0 \leq i \leq m-1$, so we are done.
\end{proof}

\begin{thm}
\label{thm:pfsub}
	Let $P$ be a projective-free permutation module and $M$ an indecomposable submodule. Then $M$ is isomorphic to one of $k$, $E_{t}$, $E_{t+1_{k}}$, $E_{\infty}$, $M_{3}$ or $M_{5}$. It follows that any submodule of $P$ is isomorphic to a direct sum of the above indecomposables.
\end{thm}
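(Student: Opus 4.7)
The plan is to dualize: given an indecomposable submodule $\iota \vcentcolon M \hookrightarrow P$, Recollection~\ref{rec:dual} yields a surjection $\iota^{*} \vcentcolon P^{*} \twoheadrightarrow M^{*}$. By Theorem~\ref{thm:dim0}, a projective-free permutation module is a direct sum of copies of $k$, $E_{t}$, $E_{t+1_{k}}$, $E_{\infty}$, each self-dual by Lemma~\ref{lem:dual}, so $P^{*}$ is again a projective-free permutation module. Applying Remark~\ref{rem:pfhom} to $\iota^{*}$ then gives
\[
	M^{*} = \im(\iota^{*}) \subseteq \ker(a) + \ker(a+b) + \ker(b) \subseteq M^{*},
\]
forcing $M^{*} = \ker(a) + \ker(a+b) + \ker(b)$.

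Since $M^{*}$ remains indecomposable (Recollection~\ref{rec:dual}), Theorem~\ref{thm:class} restricts the possibilities for $M^{*}$, and Propositions~\ref{prop:mker}--\ref{prop:efker} supply the $k$-dimension of $\ker(a) + \ker(a+b) + \ker(b)$ in each case. A straightforward comparison with $\dim_{k}(M^{*})$ rules out $kV_{4}$ (sum has dimension $3 < 4$), every $M_{2n+1}$ (dimension $n+1 < 2n+1$), every $E_{f,n}$ with $f \notin \{t, t+1_{k}, \infty\}$ (dimension $m < 2m$), and every $E_{f,n}$ with $f \in \{t, t+1_{k}, \infty\}$ and $n \geq 2$ (dimension $n+1 < 2n$). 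Combined with Theorem~\ref{thm:dim0}, the candidates for $M^{*}$ will be $k$, $E_{t}$, $E_{t+1_{k}}$, $E_{\infty}$, $W_{3}$, $W_{5}$; dualizing via Lemma~\ref{lem:dual} then identifies the allowed $M$ as $k$, $E_{t}$, $E_{t+1_{k}}$, $E_{\infty}$, $M_{3}$, $M_{5}$.

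The key subtlety will be determining exactly which $W_{2n+1}$ survive. Proposition~\ref{prop:wker} spans the sum by $\{v_{1},\dots,v_{n}\} \cup \{u_{0}, \sum_{i=0}^{n} u_{i}, u_{n}\}$, and for $n \geq 2$ the three $u$-vectors are linearly independent but cannot recover any middle $u_{i}$ individually once $n \geq 3$. This yields total dimension $n+3$, which equals $\dim_{k}(W_{2n+1}) = 2n+1$ precisely when $n = 2$; the case $n = 1$ gives dimension $3 = \dim_{k}(W_{3})$. Thus only $W_{3}$ and $W_{5}$ remain among the $W$'s.

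For the final sentence of the statement, any submodule $N \subseteq P$ admits a Krull--Schmidt decomposition $N \cong \bigoplus_{i} N_{i}$ with each $N_{i}$ indecomposable; since each $N_{i}$ embeds in $P$, the first part forces every $N_{i}$ to lie in the stated list, as required.
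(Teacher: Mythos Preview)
Your proof is correct and follows essentially the same approach as the paper: dualize the inclusion, use that $P^{*}$ is again projective-free permutation, invoke Remark~\ref{rem:pfhom} to force $M^{*} = \ker(a)+\ker(a+b)+\ker(b)$, and then eliminate indecomposables by comparing this dimension (computed in Propositions~\ref{prop:mker}--\ref{prop:efker}) against $\dim_{k}(M^{*})$. Your write-up is in fact more explicit than the paper's on the dimension count, particularly the case analysis for $W_{2n+1}$; the only cosmetic point is that the paper gets $P^{*}\cong P$ directly from Recollection~\ref{rec:dual}(iii) rather than via Lemma~\ref{lem:dual}, and your reference to Theorem~\ref{thm:dim0} in the middle paragraph would more naturally be to Theorem~\ref{thm:class}.
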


\begin{proof}
	The inclusion $\iota \vcentcolon M \inj{} P$ yields a surjection $\iota^{*} \vcentcolon P^{*} \sur{} M^{*}$ as in Recollection \ref{rec:dual}. Now $P^{*} \cong P$ is a projective free permutation module, so $\ker(a) + \ker(a+b) + \ker(b) = M^{*}$ as in Remark \ref{rem:pfhom}. Moreover, we know that $M^{*}$ is indecomposable since $M$ is indecomposable. Propositions \ref{prop:mker} to \ref{prop:efker} tell us that this can only occur when $M^{*}$ is isomorphic to one of $k$, $E_{t}$, $E_{t+1_{k}}$, $E_{\infty}$, $W_{3}$ or $W_{5}$, by counting $k$-dimensions. This means that $M \cong M^{**}$ is isomorphic to one of $k$, $E_{t}$, $E_{t+1_{k}}$, $E_{\infty}$, $M_{3}$ or $M_{5}$, using Lemma \ref{lem:dual}.
\end{proof}

\begin{rem}
	In fact, each of $k$, $E_{t}$, $E_{t+1_{k}}$, $E_{\infty}$, $M_{3}$ and $M_{5}$ can be realised as a submodule of a projective-free permutation module. This is immediate for $k$, $E_{t}$, $E_{t+1_{k}}$ and $E_{\infty}$.\linebreak For $M_{3}$ and $M_{5}$, the inclusions are described below, with the details left to the reader:
	%
	%
	%
	\[\begin{array}{ccc}
		\begin{tikzcd}[cramped,sep=tiny]
			& {\overset{w_{1}+w_{2}}\bullet} \\
			{\underset{x_{1}}\bullet} && {\underset{y_{2}}\bullet}
			\arrow[no head, from=1-2, to=2-1]
			\arrow[no head, from=1-2, to=2-3]
		\end{tikzcd}
		&
		\inj{}
		&
		\begin{tikzcd}[cramped,sep=tiny]
			& {\overset{w_{1}}\bullet} \\
			{\underset{x_{1}}\bullet}
			\arrow[no head, from=1-2, to=2-1]
		\end{tikzcd}
		\hspace{2mm}
		\begin{tikzcd}[cramped,sep=tiny]
			{\overset{w_{2}}\bullet} \\
			& {\underset{y_{2}}\bullet}
			\arrow[no head, from=1-1, to=2-2]
		\end{tikzcd}
	\end{array}\]
	%
	%
	%
	%
	\[\begin{array}{ccc}
		\begin{tikzcd}[cramped,sep=tiny]
			& {\overset{w_{1}+w_{2}}\bullet} && {\overset{w_{2}+w_{3}}\bullet} \\
			{\underset{x_{1}+x_{2}}\bullet} && {\underset{x_{2}}\bullet} && {\underset{x_{2}+y_{3}}\bullet}
			\arrow[no head, from=1-2, to=2-1]
			\arrow[no head, from=1-2, to=2-3]
			\arrow[no head, from=1-4, to=2-3]
			\arrow[no head, from=1-4, to=2-5]
		\end{tikzcd}
		&
		\inj{}
		&
		\begin{tikzcd}[cramped,sep=tiny]
			& {\overset{w_{1}}\bullet} \\
			{\underset{x_{1}}\bullet}
			\arrow[no head, from=1-2, to=2-1]
		\end{tikzcd}
		\hspace{2mm}
		\begin{tikzcd}[cramped,sep=small]
			{\overset{w_{2}}\bullet} \\
			{\underset{x_{2}}\bullet}
			\arrow[curve={height=-10pt}, no head, from=1-1, to=2-1]
			\arrow[curve={height=10pt}, no head, from=1-1, to=2-1]
		\end{tikzcd}
		\hspace{2mm}
		\begin{tikzcd}[cramped,sep=tiny]
			{\overset{w_{3}}\bullet} \\
			& {\underset{y_{3}}\bullet}
			\arrow[no head, from=1-1, to=2-2]
		\end{tikzcd}
	\end{array}\]
\end{rem}

\subsection{Quotients}
\label{sec:quo}
\mbox{}\\
The results from Section \ref{sec:proj} are also useful in understanding when $E_{t}$, $E_{t+1_{k}}$ and $E_{\infty}$ can be realised as submodules of a given indecomposable. The structure of the corresponding quotient module will be important to us later on.

\begin{rem}
\label{rem:Esub}
	Let $n \geq 1$ and $f \in k[t]$ a monic irreducible polynomial with $f \notin \{t,\, t+1_{k}\}$. Since the $kV_{4}$-action on $\ker(a) = \ker(a+b) = \ker(b)$ is trivial for both of $M_{2n+1}$ and $E_{f,n}$, it follows that $E_{t}$, $E_{t+1_{k}}$ and $E_{\infty}$ cannot occur as submodules of either one. When $f \defeq t$, the $kV_{4}$-action on $\ker(a) = \ker(a+b) \subseteq E_{t,n}$ is trivial, so $E_{\infty}$ and $E_{t+1_{k}}$ cannot occur as submodules of $E_{t,n}$. Similarly, $E_{t}$ and $E_{t+1_{k}}$ cannot occur as submodules of $E_{\infty,n}$, and finally, $E_{\infty}$ and $E_{t}$ cannot occur as submodules of $E_{(t+1_{k}),n}$.
\end{rem}

\begin{lem}
\label{lem:cokerE}
	Let $n \geq 2$. If $\phi \vcentcolon E_{t} \inj{} E_{t,n}$ is an injective morphism, then we have $\coker(\phi) \cong E_{t,n-1}$. Similarly, if $\phi' \vcentcolon E_{\infty} \inj{} E_{\infty,n}$ and $\phi'' \vcentcolon E_{t+1_{k}} \inj{} E_{(t+1_{k}),n}$ are both injective morphisms, then $\coker(\phi') \cong E_{\infty,n-1}$ and $\coker(\phi'') \cong E_{(t+1_{k}),n-1}$.
\end{lem}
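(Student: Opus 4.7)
The plan is to treat all three statements by the same template: use the descriptions of $\ker(a)$, $\ker(a+b)$, $\ker(b)$ in Propositions~\ref{prop:etker}, \ref{prop:eiker}, and \ref{prop:et1ker} to pin down where $\phi$ can send the generator $w$; absorb any non-canonical part of $\phi$ into a change of basis of the target, reducing to a ``standard'' embedding; then read off a basis for the quotient and check that the induced $kV_{4}$-action matches the diagram of the claimed indecomposable.

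For $\phi \vcentcolon E_{t} \hookrightarrow E_{t,n}$, the condition $bw = 0_{E_{t}}$ together with Proposition~\ref{prop:etker} forces $\phi(w) = \lambda u_{n-1} + \sum_{i=0}^{n-1}\mu_{i} v_{i}$, whence $\phi(x) = a\phi(w) = \lambda v_{n-1}$; injectivity demands $\lambda \neq 0_{k}$. Because every $v_{i}$ is annihilated by both $a$ and $b$, replacing $u_{n-1}$ by $u'_{n-1} \defeq \lambda^{-1}\phi(w)$ yields a new basis $\{u_{0}, \ldots, u_{n-2},\, u'_{n-1},\, v_{0}, \ldots, v_{n-1}\}$ of $E_{t,n}$ realising exactly the same diagram. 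Under this basis $\im(\phi) = \vspan_{k}\{u'_{n-1},\, v_{n-1}\}$, so $\{\bar{u}_{0}, \ldots, \bar{u}_{n-2},\, \bar{v}_{0}, \ldots, \bar{v}_{n-2}\}$ is a basis of $\coker(\phi)$ of the correct dimension $2(n-1)$, and direct inspection of the action exhibits the isomorphism with $E_{t,n-1}$. The case $\phi' \vcentcolon E_{\infty} \hookrightarrow E_{\infty,n}$ is entirely parallel via Proposition~\ref{prop:eiker}, with $u_{0}$ playing the role previously held by $u_{n-1}$; after the shift $u_{i} \mapsto \bar{u}_{i+1}$, $v_{i} \mapsto \bar{v}_{i+1}$, the quotient basis $\{\bar{u}_{1}, \ldots, \bar{u}_{n-1},\, \bar{v}_{1}, \ldots, \bar{v}_{n-1}\}$ realises the diagram of $E_{\infty,n-1}$.

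The remaining case $\phi'' \vcentcolon E_{t+1_{k}} \hookrightarrow E_{(t+1_{k}),n}$ is more delicate, because the diagram of $E_{(t+1_{k}),n}$ involves the dashed action $bu_{n-1} = \sum \alpha_{i} v_{i}$. Proposition~\ref{prop:et1ker} yields $\phi''(w) = \lambda\sum_{i=0}^{n-1}\gamma_{i} u_{i} + \sum\mu_{i} v_{i}$, with $\gamma_{0} = \gamma_{n-1} = 1_{k}$ and $\alpha_{i} + \gamma_{i} = \gamma_{i-1}$; comparing the $v_{0}$-coefficient of $\phi''(x) = \lambda\sum\gamma_{i} v_{i}$ forces $\lambda \neq 0_{k}$. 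The key observation $\gamma_{n-1} = 1_{k}$ lets us absorb the $v$-perturbation by setting $u'_{n-1} \defeq u_{n-1} + \lambda^{-1}\sum\mu_{i} v_{i}$, which satisfies the same relations $au'_{n-1} = v_{n-1}$ and $bu'_{n-1} = \sum\alpha_{i} v_{i}$ as $u_{n-1}$; after also rescaling inside $E_{t+1_{k}}$ we may assume $\phi''(w) = \sum\gamma_{i} u_{i}$. Since $\gamma_{0} = 1_{k}$, the classes $\bar{u}_{0}$ and $\bar{v}_{0}$ in the quotient can be expressed in terms of $\{\bar{u}_{i},\, \bar{v}_{i} \vcentcolon 1 \leq i \leq n-1\}$, giving a basis of the correct dimension. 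Computing $b\bar{u}_{n-1}$ in the quotient substitutes out $\bar{v}_{0}$ and produces coefficient $\alpha_{i} + \gamma_{i} = \gamma_{i-1}$ on each $\bar{v}_{i}$ for $1 \leq i \leq n-1$; after the reindexing $u_{i} \mapsto \bar{u}_{i+1}$, $v_{i} \mapsto \bar{v}_{i+1}$, the quotient is $E_{(t+1_{k}),n-1}$ provided $\gamma_{j} \equiv \binom{n-1}{j} \pmod{2}$ for $0 \leq j \leq n-2$, which is an immediate telescoping mod $2$ of Pascal's rule applied to $\gamma_{j} = \sum_{i=0}^{j}\binom{n}{i}$.

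The principal obstacle is this last case: tracking the arithmetic of the $\alpha_{i}$ and $\gamma_{i}$ through both the change of basis and the passage to the quotient, and recognising the Pascal-type identity that pins down the ``defining polynomial'' of the quotient as exactly $(t+1_{k})^{n-1}$. The other two cases are essentially mechanical once the change of basis is set up.
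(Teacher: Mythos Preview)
Your proof is correct and follows the same template as the paper's: locate the image of the embedding via Propositions~\ref{prop:etker}--\ref{prop:et1ker}, pass to the quotient, and verify the $kV_{4}$-action on an explicit basis. The only notable variation is in the $E_{t+1_{k}}$ case, where the paper uses $\gamma_{n-1}=1_{k}$ to eliminate $u_{n-1},v_{n-1}$ (keeping indices $0\le i\le n-2$) and then checks the required identity $t^{n-1}+\sum_{i=0}^{n-2}\gamma_{i}t^{i}=(t+1_{k})^{n-1}$ directly by multiplying both sides by $(t+1_{k})$ and using the recursion $\gamma_{i-1}+\gamma_{i}=\alpha_{i}$; you instead use $\gamma_{0}=1_{k}$ to eliminate $u_{0},v_{0}$, reindex, and appeal to Pascal's rule on binomial coefficients. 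Both routes work, though the paper's polynomial verification is marginally more self-contained in that it never invokes the explicit formula $\alpha_{i}\equiv\binom{n}{i}\pmod 2$.
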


\begin{proof}
	We have $\phi(E_{t}) \subseteq \ker(b) = \vspan_{k}(\{v_{i} \vcentcolon 0 \leq i \leq n-1\} \cup \{u_{n-1}\}) \subseteq E_{t,n}$, so $\phi(E_{t})$ has a $k$-basis of the form $\{\lambda u_{n-1} + \sum_{i=0}^{n-1}(\mu_{i} v_{i}),\, \lambda v_{n-1}\}$ for some $\lambda, \mu_{i} \in k$ with $\lambda \neq 0_{k}$.\linebreak It follows that a $k$-basis for $\coker(\phi)$ is given by $\left\{u_{i} \vcentcolon 0 \leq i \leq n-2\right\} \cup \left\{v_{i} \vcentcolon 0 \leq i \leq n-2\right\}$, since this is a spanning set of size $2n-2$. By looking at the $kV_{4}$-action on this $k$-basis, it is clear that $\coker(\phi) \cong E_{t,n-1}$. Similarly, we see that $\phi'(E_{\infty})$ has a $k$-basis of the form $\{\lambda u_{0} + \sum_{i=0}^{n-1}(\mu_{i} v_{i}),\, \lambda v_{0}\}$ for some $\lambda, \mu_{i} \in k$ with $\lambda \neq 0_{k}$. It follows that a $k$-basis for $\coker(\phi')$ is given by $\left\{u_{i} \vcentcolon 1 \leq i \leq n-1\right\} \cup \left\{v_{i} \vcentcolon 1 \leq i \leq n-1\right\}$, and indeed we have $\coker(\phi') \cong E_{\infty,n-1}$. When checking the $kV_{4}$-actions, the main thing to verify is that $bu_{n-2} = v_{n-1} = 0$ in the first diagram and $au_{1} = v_{0} = 0$ in the second diagram:
	\[\begin{tikzcd}[cramped,sep=tiny]
		& {\overset{u_{0}}\bullet} && {\overset{u_{1}}\bullet} & {} && {\overset{u_{n - 2}}\bullet} \\
		{\underset{v_{0}}\bullet} && {\underset{v_{1}}\bullet} && {} & {\underset{v_{n - 2}}\bullet}
		\arrow[no head, from=1-2, to=2-1]
		\arrow[no head, from=1-2, to=2-3]
		\arrow[no head, from=1-4, to=2-3]
		\arrow[shorten >=11pt, no head, from=1-4, to=2-5]
		\arrow["\ldots"{description, pos=0.4}, shift left, draw=none, from=1-5, to=2-5]
		\arrow[no head, from=1-7, to=2-6]
		\arrow[shorten >=11pt, no head, from=2-6, to=1-5]
	\end{tikzcd}
	\hspace{8mm}
	\begin{tikzcd}[cramped,sep=tiny]
		{\overset{u_{1}}\bullet} && {\overset{u_{2}}\bullet} && {} & {\overset{u_{n - 1}}\bullet} \\
		& {\underset{v_{1}}\bullet} && {\underset{v_{2}}\bullet} & {} && {\underset{v_{n - 1}}\bullet}
		\arrow[no head, from=1-1, to=2-2]
		\arrow[no head, from=1-3, to=2-2]
		\arrow[no head, from=1-3, to=2-4]
		\arrow["\ldots"{description, pos=0.4}, draw=none, from=1-5, to=2-5]
		\arrow[shorten >=13pt, no head, from=1-6, to=2-5]
		\arrow[no head, from=1-6, to=2-7]
		\arrow[shorten >=14pt, no head, from=2-4, to=1-5]
	\end{tikzcd}\]
	Finally, $\phi''(E_{t+1_{k}})$ has a $k$-basis of the form $\{\lambda \sum_{i=0}^{n-1}(\gamma_{i}u_{i}) + \sum_{i=0}^{n-1}(\mu_{i}v_{i}),\, \lambda \sum_{i=0}^{n-1}(\gamma_{i}v_{i})\}$ for some $\lambda, \mu_{i} \in k$ with $\lambda \neq 0_{k}$. Recall that $\gamma_{i} \defeq \sum_{j=0}^{i}(\alpha_{j})$ with $\alpha_{j} \in k$ such that $(t+1_{k})^{n} = t^{n} + \sum_{j=0}^{n-1}(\alpha_{j}t^{j})$, as in Proposition \ref{prop:et1ker}. We also saw that $\gamma_{n-1} = 1_{k}$, which means that $u_{n-1}$ and $v_{n-1}$ can be expressed as a $k$-linear combination of the remaining basis elements when considering their images in $\coker(\phi'')$. It follows that a $k$-basis for $\coker(\phi'')$ is given by $\left\{u_{i} \vcentcolon 0 \leq i \leq n-2\right\} \cup \left\{v_{i} \vcentcolon 0 \leq i \leq n-2\right\}$. Note that $bu_{n-2} = v_{n-1} = \sum_{i=0}^{n-2}(\gamma_{i}v_{i})$ so $\coker(\phi'')$ is described by the following diagram:
	\[\begin{tikzcd}[cramped,sep=tiny]
		& {\overset{u_{0}}\bullet} && {\overset{u_{1}}\bullet} & {} && {\overset{u_{n - 2}}\bullet} \\
		{\underset{v_{0}}\bullet} && {\underset{v_{1}}\bullet} && {} & {\underset{v_{n - 2}}\bullet} && {\underset{\sum_{i = 0}^{n - 2}(\gamma_{i} v_{i})}{\circ}}
		\arrow[no head, from=1-2, to=2-1]
		\arrow[no head, from=1-2, to=2-3]
		\arrow[no head, from=1-4, to=2-3]
		\arrow[shorten >=11pt, no head, from=1-4, to=2-5]
		\arrow["\ldots"{description, pos=0.4}, draw=none, from=1-5, to=2-5]
		\arrow[no head, from=1-7, to=2-6]
		\arrow[dashed, no head, from=1-7, to=2-8]
		\arrow[shorten >=11pt, no head, from=2-6, to=1-5]
	\end{tikzcd}\]
	\begin{spacing}{1.3}
		To conclude that this is isomorphic to $E_{(t+1_{k}),n-1}$, we must show that $t^{n-1}+\sum_{i=0}^{n-2}(\gamma_{i}t^{i}) = (t+1_{k})^{n-1}$. We have $(t+1_{k})(t^{n-1}+\sum_{i=0}^{n-2}(\gamma_{i}t^{i})) = t^{n} + \sum_{i=0}^{n-2}(\gamma_{i}t^{i+1}) + t^{n-1} + \sum_{i=0}^{n-2}(\gamma_{i}t^{i}) = t^{n} + \sum_{i=1}^{n-1}(\gamma_{i-1}t^{i}) + \sum_{i=0}^{n-1}(\gamma_{i}t^{i}) = t^{n} + \sum_{i=1}^{n-1}\big((\gamma_{i-1}+\gamma_{i})t^{i}\big) + \gamma_{0} = t^{n} + \sum_{i=0}^{n-1}(\alpha_{i}t^{i}) = (t+1_{k})^{n}$, so the result follows.
		\qedhere
	\end{spacing}
	\vspace{-0.2\baselineskip}
\end{proof}

\begin{lem}
\label{lem:cokerW}
	Let $n \geq 2$. If $\phi \vcentcolon E_{t} \inj{} W_{2n+1}$ is an injective morphism, then we have $\coker(\phi) \cong W_{2n-1}$. Similarly, if $\phi' \vcentcolon E_{\infty} \inj{} W_{2n+1}$ and $\phi'' \vcentcolon E_{t+1_{k}} \inj{} W_{2n+1}$ are both injective morphisms, then $\coker(\phi') \cong \coker(\phi'') \cong W_{2n-1}$.
\end{lem}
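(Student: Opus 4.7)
The plan is to mirror the approach of Lemma~\ref{lem:cokerE}: in each of the three cases, use Proposition~\ref{prop:wker} to pin down the possible images, extract an explicit basis for the cokernel, and verify that the $kV_{4}$-action matches the $W_{2n-1}$ diagram. Since $w$ generates $E_{t}$, $E_{\infty}$ and $E_{t+1_{k}}$ respectively as $kV_{4}$-modules, the image in each case is entirely determined by $\phi(w)$, $\phi'(w)$ or $\phi''(w)$. The defining relations $bw = 0$, $aw = 0$ and $(a+b)w = 0$ force these elements into $\ker(b)$, $\ker(a)$ and $\ker(a+b)$ respectively; combining with Proposition~\ref{prop:wker} (and injectivity applied to the image of the lower basis element) gives
\begin{align*}
\phi(w) &= \lambda u_{n} + \textstyle\sum_{i=1}^{n}(\mu_{i} v_{i}), & \phi(x) &= \lambda v_{n}, \\
\phi'(w) &= \lambda u_{0} + \textstyle\sum_{i=1}^{n}(\mu_{i} v_{i}), & \phi'(y) &= \lambda v_{1}, \\
\phi''(w) &= \lambda \textstyle\sum_{j=0}^{n}(u_{j}) + \sum_{i=1}^{n}(\mu_{i} v_{i}), & \phi''(x) &= \lambda \textstyle\sum_{i=1}^{n}(v_{i}),
\end{align*}
with $\lambda \in k^{\times}$ in each case.

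For $\phi$ and $\phi'$, the cokernel quickly admits a basis on which the $W_{2n-1}$ diagram is manifest. In the first case, the relations $\bar{v_{n}} = 0$ and $\bar{u_{n}} \in \vspan_{k}\{\bar{v_{1}}, \ldots, \bar{v_{n-1}}\}$ allow the basis $\{\bar{u_{i}} \vcentcolon 0 \leq i \leq n-1\} \cup \{\bar{v_{i}} \vcentcolon 1 \leq i \leq n-1\}$; a direct check then shows $b\bar{u_{n-1}} = \bar{v_{n}} = 0$ and the remaining actions propagate cleanly, yielding $W_{2n-1}$. The $E_{\infty}$ case is analogous after the relabelling $u'_{i} \defeq \bar{u_{i+1}}$ and $v'_{i} \defeq \bar{v_{i+1}}$, with the roles of the two end nodes interchanged.

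The main obstacle is the $E_{t+1_{k}}$ case, where the quotient relation $\bar{v_{n}} = \sum_{i=1}^{n-1}(\bar{v_{i}})$ obstructs the naive basis: one computes $b\bar{u_{n-1}} = \bar{v_{n}} = \sum_{i=1}^{n-1}(\bar{v_{i}}) \neq 0$, breaking the W-shape at the final column. The remedy is a telescoping change of basis. Setting $u'_{i} \defeq \sum_{j=0}^{i}(\bar{u_{j}})$ for $0 \leq i \leq n-1$ and $v'_{i} \defeq \sum_{j=1}^{i}(\bar{v_{j}})$ for $1 \leq i \leq n-1$, one verifies directly that $au'_{0} = 0$, $au'_{i} = v'_{i}$ for $1 \leq i \leq n-1$, and $bu'_{i} = v'_{i+1}$ for $0 \leq i \leq n-2$; the final check $bu'_{n-1} = \sum_{j=0}^{n-2}(\bar{v_{j+1}}) + \bar{v_{n}} = \sum_{j=1}^{n-1}(\bar{v_{j}}) + \sum_{j=1}^{n-1}(\bar{v_{j}}) = 0$ invokes the relation above together with characteristic two. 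This exhibits the $W_{2n-1}$ structure.
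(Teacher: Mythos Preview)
Your proof is correct and follows essentially the same approach as the paper: identify the image via Proposition~\ref{prop:wker}, read off a basis for the cokernel, and in the $E_{t+1_{k}}$ case apply the telescoping change of basis $u'_{i} = \sum_{j=0}^{i}(\bar{u_{j}})$, $v'_{i} = \sum_{j=1}^{i}(\bar{v_{j}})$ (which the paper denotes $\gamma_{i}$, $\delta_{i}$) to recover the $W_{2n-1}$ diagram.
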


\vspace{-0.5\baselineskip}

\begin{proof}
	We have $\phi(E_{t}) \subseteq \ker(b) = \vspan_{k}(\{v_{i} \vcentcolon 1 \leq i \leq n\} \cup \{u_{n}\}) \subseteq W_{2n+1}$, so $\phi(E_{t})$ has a $k$-basis of the form $\{\lambda u_{n} + \sum_{i=1}^{n}(\mu_{i}v_{i}),\, \lambda v_{n}\}$ for some $\lambda, \mu_{i} \in k$ with $\lambda \neq 0_{k}$.\linebreak It follows that a $k$-basis for $\coker(\phi)$ is given by $\left\{u_{i} \vcentcolon 0 \leq i \leq n-1\right\} \cup \left\{v_{i} \vcentcolon 1 \leq i \leq n-1\right\}$, since this is a spanning set of size $2n-1$. By looking at the $kV_{4}$-action on this $k$-basis, it is clear that $\coker(\phi) \cong W_{2n-1}$. Similarly, we see that $\phi'(E_{\infty})$ has a $k$-basis of the form $\{\lambda u_{0} + \sum_{i=1}^{n}(\mu_{i} v_{i}),\, \lambda v_{1}\}$ for some $\lambda, \mu_{i} \in k$ with $\lambda \neq 0_{k}$. It follows that a $k$-basis \linebreak for $\coker(\phi')$ is given by $\left\{u_{i} \vcentcolon 1 \leq i \leq n\right\} \cup \left\{v_{i} \vcentcolon 2 \leq i \leq n\right\}$, and indeed $\coker(\phi') \cong W_{2n-1}$. When checking the $kV_{4}$-actions, the main thing to verify is that $bu_{n-1} = v_{n} = 0$ in the first diagram and $au_{1} = v_{1} = 0$ in the second diagram:
	\[\begin{tikzcd}[cramped,sep=tiny]
		{\overset{u_{0}}\bullet} && {\overset{u_{1}}\bullet} && {\overset{u_{2}}\bullet} & {} && {\overset{u_{n-1}}\bullet} \\
		& {\underset{v_{1}}\bullet} && {\underset{v_{2}}\bullet} && {} & {\underset{v_{n-1}}\bullet}
		\arrow[no head, from=1-1, to=2-2]
		\arrow[no head, from=1-3, to=2-2]
		\arrow[no head, from=1-3, to=2-4]
		\arrow[shorten >=10pt, no head, from=1-5, to=2-6]
		\arrow["\ldots"{description, pos=0.4}, shift left, draw=none, from=1-6, to=2-6]
		\arrow[no head, from=1-8, to=2-7]
		\arrow[no head, from=2-4, to=1-5]
		\arrow[shorten >=10pt, no head, from=2-7, to=1-6]
	\end{tikzcd}
	\hspace{8mm}
	\begin{tikzcd}[cramped,sep=tiny]
		{\overset{u_{1}}\bullet} && {\overset{u_{2}}\bullet} && {\overset{u_{3}}\bullet} & {} && {\overset{u_{n}}\bullet} \\
		& {\underset{v_{2}}\bullet} && {\underset{v_{3}}\bullet} && {} & {\underset{v_{n}}\bullet}
		\arrow[no head, from=1-1, to=2-2]
		\arrow[no head, from=1-3, to=2-2]
		\arrow[no head, from=1-3, to=2-4]
		\arrow[shorten >=10pt, no head, from=1-5, to=2-6]
		\arrow["\ldots"{description, pos=0.4}, shift left, draw=none, from=1-6, to=2-6]
		\arrow[no head, from=1-8, to=2-7]
		\arrow[no head, from=2-4, to=1-5]
		\arrow[shorten >=15pt, no head, from=2-7, to=1-6]
	\end{tikzcd}\]
	Finally, $\phi''(E_{t+1_{k}})$ has a $k$-basis of the form $\{\lambda \sum_{i=0}^{n}(u_{i}) + \sum_{i=1}^{n}(\mu_{i} v_{i}),\, \lambda \sum_{i=1}^{n}(v_{i})\}$ for some $\lambda, \mu_{i} \in k$ with $\lambda \neq 0_{k}$. Note that we can express $u_{n}$ and $v_{n}$ as a $k$-linear combination of the remaining basis elements when considering their images in $\coker(\phi'')$. It follows that a $k$-basis for $\coker(\phi'')$ is given by $\left\{u_{i} \vcentcolon 0 \leq i \leq n-1\right\} \cup \left\{v_{i} \vcentcolon 1 \leq i \leq n-1\right\}$. To see that $\coker(\phi'') \cong W_{2n-1}$, we change the $k$-basis to $\left\{\gamma_{i} \vcentcolon 0 \leq i \leq n-1\right\} \cup \left\{\delta_{i} \vcentcolon 1 \leq i \leq n-1\right\}$ where $\gamma_{i} \defeq \sum_{j=0}^{i}(u_{j})$ for $0 \leq i \leq n-1$ and $\delta_{i} \defeq \sum_{j=1}^{i}(v_{j})$ for $1 \leq i \leq n-1$. We claim that the $kV_{4}$-action on this $k$-basis is described by the following diagram:
	\[\begin{tikzcd}[cramped,sep=tiny]
		{\overset{\gamma_{0}}\bullet} && {\overset{\gamma_{1}}\bullet} && {\overset{\gamma_{2}}\bullet} & {} && {\overset{\gamma_{n-1}}\bullet} \\
		& {\underset{\delta_{1}}\bullet} && {\underset{\delta_{2}}\bullet} && {} & {\underset{\delta_{n-1}}\bullet}
		\arrow[no head, from=1-1, to=2-2]
		\arrow[no head, from=1-3, to=2-2]
		\arrow[no head, from=1-3, to=2-4]
		\arrow[shorten >=10pt, no head, from=1-5, to=2-6]
		\arrow["\ldots"{description, pos=0.4}, shift left, draw=none, from=1-6, to=2-6]
		\arrow[no head, from=1-8, to=2-7]
		\arrow[no head, from=2-4, to=1-5]
		\arrow[shorten >=10pt, no head, from=2-7, to=1-6]
	\end{tikzcd}\]
	\begin{spacing}{1.3}
		Firstly, note that $a\delta_{i} = b\delta_{i} = 0$ for $1 \leq i \leq n-1$. For $1 \leq i \leq n-1$, we also have $a\gamma_{i} = \sum_{j=0}^{i}(au_{j}) = \sum_{j=1}^{i}(v_{j}) = \delta_{i}$ and for $0 \leq i \leq n-2$ we have $b\gamma_{i} = \sum_{j=0}^{i}(bu_{j}) = \sum_{j=0}^{i}(v_{j+1}) = \sum_{j=1}^{i+1}(v_{j}) = \delta_{i+1}$. Finally, $a\gamma_{0} = au_{0} = 0$ and $b\gamma_{n-1} = \sum_{j=0}^{n-1}(bu_{j}) = \sum_{j=0}^{n-1}(v_{j+1}) = \sum_{j=1}^{n}(v_{j}) = 0$. Therefore $\coker(\phi'')$ is indeed isomorphic to $W_{2n-1}$.
		\qedhere
	\end{spacing}
	\vspace{-0.2\baselineskip}
\end{proof}

\subsection{Dimension Two}
\label{sec:dim2}
\mbox{}\\
We are now ready to show that the indecomposables not listed in Theorems \ref{thm:dim0} and \ref{thm:dim1} have permutation dimension equal to two. We know that their permutation dimension must be either one or two, so it suffices to show that it cannot be equal to one.

\begin{lem}
\label{lem:essPerm}
	Let $M$ be a $kV_{4}$-module with $\ppdim(M) = 1$. Then there exists an essential surjection $\psi \vcentcolon P' \sur{} M$ such that $P'$ is a permutation module and $\ker(\psi)$ is a submodule of a projective-free permutation module.
\end{lem}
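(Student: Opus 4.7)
The plan is to take any length-one permutation resolution $0 \to K \to P \to M \to 0$ (which exists by hypothesis) and refine it in two stages: first strip all $kV_4$ summands from the kernel, then pass to a direct summand of the middle term making the resulting surjection essential.

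For the first stage, suppose $K = kV_4 \oplus K_0$ has a free summand. The inclusion $kV_4 \hookrightarrow P$ splits by self-injectivity of $kV_4$ (projectives and injectives coincide for $kV_4$-modules), so $P = kV_4 \oplus P_1$, where the $kV_4$ summand is exactly the one coming from $K$. Restricting the projection $P \twoheadrightarrow kV_4$ to $K$ retracts the inclusion $kV_4 \hookrightarrow K$, giving $K = kV_4 \oplus (K \cap P_1)$; by Krull-Schmidt, $K \cap P_1 \cong K_0$ is still permutation, with one fewer $kV_4$ summand. Since $\phi(kV_4) \subseteq \phi(K) = 0$, the restriction $\phi|_{P_1}\colon P_1 \twoheadrightarrow M$ remains surjective with kernel $K \cap P_1$. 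Iterating this reduction, we may assume $K$ is projective-free.

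For the second stage, decompose $P = \bigoplus_{i=1}^{n} P_i$ into indecomposable permutation summands; each $\hd(P_i) \cong k$ is simple (as one can read off from the diagrams in Theorem \ref{thm:dim0}). The induced $\bar\phi\colon \hd(P) \twoheadrightarrow \hd(M)$ is then a surjection of finite-dimensional $k$-vector spaces, so we may pick $S \subseteq \{1,\ldots,n\}$ such that $\bar\phi$ restricts to an isomorphism $\bigoplus_{i \in S} \hd(P_i) \xrightarrow{\cong} \hd(M)$. Setting $P' \defeq \bigoplus_{i \in S} P_i$ and $\psi \defeq \phi|_{P'}$, the relation $\psi(P') + \rad(M) = M$ combined with the essentiality of the canonical quotient $M \twoheadrightarrow \hd(M)$ (Proposition \ref{prop:nak}) forces $\psi(P') = M$. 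Then $\bar\psi$ is an isomorphism, so $\psi$ is essential by Proposition \ref{prop:esshd}. Finally, $\ker(\psi) = P' \cap K$ embeds into the projective-free permutation module $K$ produced in the first stage, as required.

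The main subtlety is ordering the two refinements correctly: one must strip free summands from the kernel \emph{before} essentializing. Reversing the order risks leaving $kV_4$ summands inside the new kernel that can no longer be eliminated without breaking essentiality of the surjection, and a projective-free ambient module is precisely what the later arguments invoking Theorem \ref{thm:pfsub} require.
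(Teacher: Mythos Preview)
Your proof is correct and follows essentially the same approach as the paper: first strip projective summands from the kernel via self-injectivity, then pass to a direct summand of $P$ indexed by a subset realizing an isomorphism on heads, invoking Propositions~\ref{prop:nak} and~\ref{prop:esshd} exactly as the paper does. Your phrasing in terms of choosing $S$ so that $\bar\phi$ restricts to an isomorphism is slightly cleaner than the paper's generator-reduction language, but the content is identical.
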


\begin{proof}
	First, note that the head of each indecomposable permutation module is isomorphic to the trivial module $k$. It follows that the number of indecomposable summands in a decomposition of a permutation module is equal to the $k$-dimension of its head.
	
	Since $\ppdim(M) = 1$, there exists a short exact sequence $\ker(\phi) \inj{} P \sur{\phi} M$ such that both $P$ and $\ker(\phi)$ are permutation modules. We may assume, without loss of generality, that $\ker(\phi)$ is a projective-free permutation module. Indeed, projective modules are also injective, so any projective summands in $\ker(\phi)$ can be `split off' from the short exact sequence since their image in $P$ is also direct summand. A permutation module with projective summands removed is of course still a permutation module.

	Let $P = \bigoplus_{i = 1}^{n}(P_{i})$ be a decomposition of $P$ into indecomposable permutation modules. Since each $P_{i}$ is principally generated as a $kV_{4}$-module, we may choose a generating set $\{w_{i} \vcentcolon 1 \leq i \leq n\}$ for $P$ such that each $P_{i}$ is generated by $w_{i}$. Now $\{\phi(w_{i}) \vcentcolon 1 \leq i \leq n\}$ is a generating set for $M$, so $\{\phi(w_{i}) + \rad(M) \vcentcolon 1 \leq i \leq n\}$ is a generating set for $\hd(M)$. Since $a$ and $b$ act trivially on $\hd(M)$, this is in fact a generating set for $\hd(M)$ as a $k$-vector space, so we may reduce it to a $k$-basis denoted by $\{\phi(w_{i}) + \rad(M) \vcentcolon 1 \leq i \leq n'\}$ without loss of generality, where $n' \defeq \dim_{k}(\hd(M))$.
	
	Let $P' \defeq \bigoplus_{i = 1}^{n'}(P_{i})$. The image of $\phi(P') \subseteq M$ under the quotient map $M \sur{\pi} \hd(M)$ contains $\{\phi(w_{i}) + \rad(M) \vcentcolon 1 \leq i \leq n'\}$, hence it is all of $\hd(M)$. Proposition \ref{prop:nak} tells us that $\pi$ is an essential surjection, so $\phi(P') = M$. Define $\psi \vcentcolon P' \sur{} M$ as the restriction of $\phi$ to $P'$. Note that $\dim_{k}(\hd(P')) = n' = \dim_{k}(\hd(M))$, so $\psi$ is an essential surjection by Proposition \ref{prop:esshd}. Finally, $\ker(\psi) = \ker(\phi|_{P'})$ is a submodule of $\ker(\phi)$, which is a projective-free permutation module.
\end{proof}

\begin{lem}
\label{lem:snake}
	Let $\phi \vcentcolon M' \sur{} M$ be an essential surjection of $kV_{4}$-modules. Then there exists a short exact sequence of the form:
	$\;$
	$\begin{tikzcd}[cramped,sep=small]
		0 & {\Omega(M')} & {\Omega(M)} & {\ker(\phi)} & 0
		\arrow[from=1-1, to=1-2]
		\arrow[from=1-2, to=1-3]
		\arrow[from=1-3, to=1-4]
		\arrow[from=1-4, to=1-5]
	\end{tikzcd}$
\end{lem}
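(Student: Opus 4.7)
The plan is to lift $\phi$ to a morphism between projective covers and then apply the snake lemma. First I would take projective covers $\pi \vcentcolon P \twoheadrightarrow M$ and $\pi' \vcentcolon P' \twoheadrightarrow M'$, so that by definition $\Omega(M) = \ker(\pi)$ and $\Omega(M') = \ker(\pi')$. Using the projectivity of $P'$, I would lift the surjection $\phi \circ \pi'$ through $\pi$ to obtain a morphism $\tilde{\phi} \vcentcolon P' \to P$ satisfying $\pi \circ \tilde{\phi} = \phi \circ \pi'$. This commutativity automatically ensures that $\tilde{\phi}$ restricts to a morphism $f \vcentcolon \Omega(M') \to \Omega(M)$, giving a map of short exact sequences.

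The crucial step is to show that $\tilde{\phi}$ is in fact an isomorphism. Essential surjections are closed under composition (a short diagram chase suffices), so $\pi \circ \tilde{\phi} = \phi \circ \pi'$ is essential; combined with the essentiality of $\pi$, this forces $\tilde{\phi}$ itself to be an essential surjection. Now an essential surjection between projective $kV_{4}$-modules is an isomorphism: it splits by projectivity of the target, so its kernel is a direct summand of the source, and that kernel must have trivial head by Proposition \ref{prop:esshd}, hence is zero by Nakayama.

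With $\tilde{\phi}$ in hand as an isomorphism, I would apply the snake lemma to the diagram whose rows are the defining short exact sequences of $\Omega(M')$ and $\Omega(M)$, with vertical maps $f$, $\tilde{\phi}$, and $\phi$. The resulting six-term exact sequence collapses: $\ker(\tilde{\phi}) = \coker(\tilde{\phi}) = 0$ because $\tilde{\phi}$ is an isomorphism, and $\coker(\phi) = 0$ because $\phi$ is surjective. What remains is exactly the required short exact sequence $0 \to \Omega(M') \to \Omega(M) \to \ker(\phi) \to 0$.

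The main obstacle is establishing the isomorphism $\tilde{\phi}$; the remainder of the proof is purely formal. The argument hinges on the standard but nontrivial facts that essentiality is preserved under composition and that an essential surjection between projective modules is automatically an isomorphism, both of which I would verify explicitly before invoking the snake lemma.
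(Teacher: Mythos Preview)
Your proof is correct and uses the same underlying idea as the paper, namely the Snake Lemma applied to a morphism of projective-cover short exact sequences. The difference is one of economy. The paper takes only a single projective cover $\psi \vcentcolon P \twoheadrightarrow M'$ and observes directly that the composite $\phi \circ \psi \vcentcolon P \twoheadrightarrow M$ is again a projective cover of $M$, since the composition of essential surjections is essential. Thus the \emph{same} module $P$ serves for both rows, the middle vertical map is the identity, and the inclusion $\Omega(M') \hookrightarrow \Omega(M)$ is literal containment inside $P$. You instead take two a priori different projective covers $P'$ and $P$, lift to obtain $\tilde{\phi} \vcentcolon P' \to P$, and then spend the bulk of the argument showing that $\tilde{\phi}$ is an isomorphism. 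This is correct (your reasoning that essentiality of $\pi \circ \tilde{\phi}$ together with essentiality of $\pi$ forces $\tilde{\phi}$ to be an essential surjection, hence an isomorphism between projectives, is sound), but it reproves a fact the paper sidesteps entirely. The paper's route is shorter; yours is perhaps more portable, since it would work verbatim in any setting where projective covers exist, without needing to identify them.
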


\begin{proof}
	Let $\psi \vcentcolon P \sur{} M'$ be a projective cover of $M'$. Then $\phi \circ \psi \vcentcolon P \sur{} M$ is a projective cover of $M$. Indeed, both $\psi$ and $\phi$ are essential, so if $P' \subsetneq P$ is a proper submodule of $P$ then $\psi(P') \subsetneq M'$ and so $\phi \circ \psi(P') \subsetneq M$, hence $\phi \circ \psi$ is essential. Consider the following commutative diagram:
	\vspace{-0.7\baselineskip}
	\[\begin{tikzcd}[cramped,sep=scriptsize]
		&&& {\ker(\phi)} \\
		0 & {\Omega(M')} & P & {M'} & 0 \\
		0 & {\Omega(M)} & P & M & 0 \\
		& {\coker(\iota)}
		\arrow[hook, from=1-4, to=2-4]
		\arrow[from=2-1, to=2-2]
		\arrow[hook, from=2-2, to=2-3]
		\arrow["\iota", hook, from=2-2, to=3-2]
		\arrow["\psi", two heads, from=2-3, to=2-4]
		\arrow["{\big|\big|}"{description}, draw=none, from=2-3, to=3-3]
		\arrow[from=2-4, to=2-5]
		\arrow["\phi", two heads, from=2-4, to=3-4]
		\arrow[from=3-1, to=3-2]
		\arrow[hook, from=3-2, to=3-3]
		\arrow[two heads, from=3-2, to=4-2]
		\arrow["{\phi \circ \psi}"', two heads, from=3-3, to=3-4]
		\arrow[from=3-4, to=3-5]
	\end{tikzcd}\]
	The rows are exact by definition of the Heller operator as the kernel of a projective cover. Note that $\phi \circ \psi(\Omega(M')) = \phi(\{0_{M'}\}) = \{0_{M}\}$, so $\Omega(M') \subseteq \Omega(M)$ as indicated by the inclusion $\iota$ in the diagram. The Snake Lemma gives us an isomorphism $\ker(\phi) \cong \coker(\iota)$, therefore we have a short exact sequence:
	$\begin{tikzcd}[cramped,sep=small]
		0 & {\Omega(M')} & {\Omega(M)} & {\ker(\phi)} & 0
		\arrow[from=1-1, to=1-2]
		\arrow[from=1-2, to=1-3]
		\arrow[from=1-3, to=1-4]
		\arrow[from=1-4, to=1-5]
	\end{tikzcd}$
\end{proof}

\begin{cor}
\label{cor:SES}
	Let $M$ be an indecomposable $kV_{4}$-module with $\ppdim(M) = 1$. Then there exists a short exact sequence of the form
	$
	\;
	\begin{tikzcd}[cramped,sep=small]
		0 & P & {\Omega(M)} & N & 0
		\arrow[from=1-1, to=1-2]
		\arrow[from=1-2, to=1-3]
		\arrow[from=1-3, to=1-4]
		\arrow[from=1-4, to=1-5]
	\end{tikzcd}
	\;
	$
	with $N$ a submodule of a projective-free permutation module and $P$ a permutation module with indecomposable summands among $E_{t}$, $E_{t+1_{k}}$ and $E_{\infty}$.
\end{cor}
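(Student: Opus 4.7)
The plan is to chain Lemmas \ref{lem:essPerm} and \ref{lem:snake}, then refine the output via one further observation about the absence of trivial summands in the essential cover.

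Since $\ppdim(M) = 1$, Lemma \ref{lem:essPerm} produces an essential surjection $\psi \colon P' \twoheadrightarrow M$ with $P'$ a permutation module and $\ker(\psi)$ a submodule of a projective-free permutation module. Feeding $\psi$ into Lemma \ref{lem:snake} yields a short exact sequence
\[
0 \longrightarrow \Omega(P') \longrightarrow \Omega(M) \longrightarrow \ker(\psi) \longrightarrow 0,
\]
so setting $N \defeq \ker(\psi)$ and $P \defeq \Omega(P')$ recovers the desired shape. Because $\Omega$ is additive, annihilates $kV_4$, and fixes each of the even-dimensional indecomposables $E_t$, $E_{t+1_k}$, $E_\infty$, it would suffice to show that the decomposition of $P'$ into indecomposable permutation modules contains no summand isomorphic to $k$; then the only possible contributions to $\Omega(P')$ come from $E_t$, $E_{t+1_k}$, $E_\infty$, as required.

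The main obstacle is precisely this last claim. Suppose for contradiction that $P' = k \oplus Q$ with the $k$-summand generated by some element $w$; then $\psi(w) \in M$ is annihilated by both $a$ and $b$, so $\psi(w) \in \soc(M)$. A quick inspection of the diagrams from Section \ref{sec:class} shows that $\soc(M) \subseteq \rad(M)$ for every indecomposable $M \not\cong k$: for the non-projective indecomposables both the socle and the radical coincide with the span of the ``bottom row'' of basis elements, while $\soc(kV_4) = \langle ab \rangle \subsetneq \langle a, b \rangle = \rad(kV_4)$. Since $\ppdim(M) = 1 > 0$ rules out $M$ being a permutation module, we have $M \not\cong k$ in particular, so this inclusion applies. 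Then $\psi(w) \in \rad(M)$, but the class $w + \rad(P')$ is a non-zero element of $\hd(P')$ that maps to $0$ in $\hd(M)$ under the induced map on heads, contradicting the essentiality of $\psi$ via Proposition \ref{prop:esshd}. This forces $P'$ to have indecomposable summands only among $E_t$, $E_{t+1_k}$, $E_\infty$ and $kV_4$, and applying $\Omega$ termwise delivers a permutation module $P$ of the advertised form.
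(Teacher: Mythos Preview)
Your proof is correct and follows essentially the same route as the paper: apply Lemma~\ref{lem:essPerm}, feed the resulting essential surjection into Lemma~\ref{lem:snake}, and then rule out trivial summands in $P'$ by observing that $\soc(M) \subseteq \rad(M)$ for non-trivial indecomposables (the paper phrases this as $\ker(a)\cap\ker(b)\subseteq\rad(M)$, which is the same thing). The only cosmetic difference is that the paper invokes the isomorphism $\bar\psi$ on heads directly, whereas you exhibit an explicit non-zero element of $\ker(\bar\psi)$; both reach the same contradiction with Proposition~\ref{prop:esshd}.
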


\begin{proof}
	In the notation of Lemma \ref{lem:essPerm}, we obtain a short exact sequence:
	\[\begin{tikzcd}[cramped,sep=small]
		0 & {\Omega(P')} & {\Omega(M)} & {\ker(\psi)} & 0
		\arrow[from=1-1, to=1-2]
		\arrow[from=1-2, to=1-3]
		\arrow[from=1-3, to=1-4]
		\arrow[from=1-4, to=1-5]
	\end{tikzcd}\]
	by applying Lemma \ref{lem:snake}. We take $P \defeq \Omega(P')$ and $N \defeq \ker(\psi)$. It remains to show that the indecomposable summands of $P$ are among $E_{t}$, $E_{t+1_{k}}$ and $E_{\infty}$. We first show that $k$ cannot be a summand of $P'$. Since $\psi$ is an essential surjection, we know that $\bar{\psi} \vcentcolon \hd(P') \to \hd(M)$ is an isomorphism as in Proposition \ref{prop:esshd}. If $k$ is a summand of $P'$, this restricts to an isomorphism $\bar{\psi} \vcentcolon k \to \bar{\psi}(k)$, since $\hd(k) = k$. Now, it is straightforward to check that any non-trivial indecomposable $M$ satisfies $\ker(a) \cap \ker(b) \subseteq \rad(M)$, and $M$ is certainly non-trivial because $\ppdim(M)=1$. Since $\psi(k) \subseteq \ker(a) \cap \ker(b)$, it follows that $k$ vanishes under the map $\bar{\psi}$, a contradiction. We know that $P'$ is a permutation module, so its indecomposable summands must be among $kV_{4}$, $E_{t}$, $E_{t+1_{k}}$ and $E_{\infty}$. It follows that the indecomposable summands of $P \defeq \Omega(P')$ are among $E_{t}$, $E_{t+1_{k}}$ and $E_{\infty}$. To be explicit, $E_{t}$, $E_{t+1_{k}}$ and $E_{\infty}$ are fixed by $\Omega(-)$, whereas $\Omega(kV_{4}) = 0$.
\end{proof}

\begin{thm}
\label{thm:ppdim2M}
	Let $n \geq 4$. Then $\ppdim(M_{2n+1}) = 2$.
\end{thm}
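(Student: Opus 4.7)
The plan is to establish the lower bound $\ppdim(M_{2n+1}) \geq 2$ by contradiction, since Proposition \ref{prop:ppdimM} already gives the upper bound $\ppdim(M_{2n+1}) \leq 2$. So suppose toward a contradiction that $\ppdim(M_{2n+1}) = 1$. By Corollary \ref{cor:SES}, there exists a short exact sequence
\[
0 \to P \to \Omega(M_{2n+1}) \to N \to 0
\]
where $N$ is a submodule of a projective-free permutation module and $P$ is a permutation module whose indecomposable summands lie in $\{E_{t},\, E_{t+1_{k}},\, E_{\infty}\}$.

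The next step is to replace $\Omega(M_{2n+1})$ by its explicit description. Since $n \geq 4 \geq 2$, the Heller shift lemma gives $\Omega(M_{2n+1}) \cong M_{2n-1}$, so we obtain an injection $P \inj{} M_{2n-1}$ with quotient $N$. I would then argue that $P = 0$: each indecomposable summand of $P$ would embed into $M_{2n-1}$, but Remark \ref{rem:Esub} explicitly forbids $E_{t}$, $E_{t+1_{k}}$ and $E_{\infty}$ from appearing as submodules of any $M_{2m+1}$ (because the $kV_{4}$-action is trivial on $\ker(a) = \ker(a+b) = \ker(b) \subseteq M_{2m+1}$, by Proposition \ref{prop:mker}).

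With $P = 0$, the short exact sequence forces $M_{2n-1} \cong N$ to be a submodule of a projective-free permutation module. Theorem \ref{thm:pfsub} then tells us that $M_{2n-1}$, being indecomposable, must be isomorphic to one of $k$, $E_{t}$, $E_{t+1_{k}}$, $E_{\infty}$, $M_{3}$ or $M_{5}$. But for $n \geq 4$ we have $2n-1 \geq 7$, so $M_{2n-1} \in \{M_{7}, M_{9}, \ldots\}$, which is disjoint from the above list. This is the desired contradiction, and combined with the upper bound we conclude $\ppdim(M_{2n+1}) = 2$.

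There is no real obstacle: all of the machinery (the Heller shift computation, the structural result Theorem \ref{thm:pfsub} on submodules of projective-free permutation modules, and the nonexistence observations in Remark \ref{rem:Esub}) has been assembled precisely so that this proof is a short chain of invocations. The only thing to verify carefully is the range of $n$: we need $n \geq 2$ for the Heller identification and $n \geq 4$ so that $M_{2n-1}$ falls outside the list $\{k, E_{t}, E_{t+1_{k}}, E_{\infty}, M_{3}, M_{5}\}$ of indecomposables that \emph{can} appear as submodules of projective-free permutation modules.
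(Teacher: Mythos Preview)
Your proposal is correct and follows essentially the same argument as the paper: assume $\ppdim(M_{2n+1}) = 1$, invoke Corollary~\ref{cor:SES} with $\Omega(M_{2n+1}) \cong M_{2n-1}$, use Remark~\ref{rem:Esub} to force $P = 0$, and then derive a contradiction from Theorem~\ref{thm:pfsub}. Your explicit remark that $n \geq 4$ is exactly what makes $M_{2n-1}$ fall outside the list in Theorem~\ref{thm:pfsub} is a welcome clarification that the paper leaves implicit.
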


\begin{proof}
	Suppose that $\ppdim(M_{2n+1}) = 1$ and consider the short exact sequence:
	\[\begin{tikzcd}[cramped,sep=small]
		0 & P & {M_{2n-1}} & N & 0
		\arrow[from=1-1, to=1-2]
		\arrow[from=1-2, to=1-3]
		\arrow[from=1-3, to=1-4]
		\arrow[from=1-4, to=1-5]
	\end{tikzcd}\]
	from Corollary \ref{cor:SES}, noting that $\Omega(M_{2n+1}) \cong M_{2n-1}$. As in Remark \ref{rem:Esub}, $M_{2n-1}$ does not admit $E_{t}$, $E_{t+1_{k}}$ or $E_{\infty}$ as submodules, so we must have $P = 0$. Then $M_{2n-1} \cong N$ is a submodule of a projective-free permutation module, but this contradicts Theorem \ref{thm:pfsub}. We conclude that $\ppdim(M_{2n+1}) \neq 1$ and so $\ppdim(M_{2n+1}) = 2$ by Proposition \ref{prop:ppdimM}.
\end{proof}

\begin{thm}
	Let $n \geq 1$ and $f \in k[t]$ a monic irreducible polynomial with $f \neq t$ and $f \neq t+1_{k}$. Then $\ppdim(E_{f,n}) = 2$.
\end{thm}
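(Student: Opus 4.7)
The plan is to mirror the proof of Theorem \ref{thm:ppdim2M} almost verbatim, using the fact that the non-projective even-dimensional indecomposables are fixed by the Heller operator. We already know $\ppdim(E_{f,n}) \leq 2$ from Proposition \ref{prop:ppdimE}, so it suffices to rule out $\ppdim(E_{f,n}) \in \{0, 1\}$. Since $f \notin \{t, t+1_{k}, \infty\}$, the module $E_{f,n}$ does not appear in the list of indecomposable permutation modules given by Theorem \ref{thm:dim0}, so $\ppdim(E_{f,n}) \neq 0$.

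For the case $\ppdim(E_{f,n}) = 1$, I would argue by contradiction using Corollary \ref{cor:SES}. Supposing $\ppdim(E_{f,n}) = 1$, the corollary yields a short exact sequence
\[
\begin{tikzcd}[cramped,sep=small]
    0 & P & {\Omega(E_{f,n})} & N & 0
    \arrow[from=1-1, to=1-2]
    \arrow[from=1-2, to=1-3]
    \arrow[from=1-3, to=1-4]
    \arrow[from=1-4, to=1-5]
\end{tikzcd}
\]
where $N$ is a submodule of a projective-free permutation module and the indecomposable summands of $P$ lie among $E_{t}$, $E_{t+1_{k}}$, $E_{\infty}$. Since $E_{f,n}$ is a non-projective even-dimensional indecomposable, we have $\Omega(E_{f,n}) \cong E_{f,n}$ by the discussion in Section \ref{sec:hell}.

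The key input is Remark \ref{rem:Esub}: because $f \notin \{t, t+1_{k}\}$ (and $f \neq \infty$), none of $E_{t}$, $E_{t+1_{k}}$, $E_{\infty}$ embeds into $E_{f,n}$. Hence the composition $P \inj{} E_{f,n}$ must be zero on every indecomposable summand of $P$, forcing $P = 0$. Then the short exact sequence gives $E_{f,n} \cong N$, realising $E_{f,n}$ as a submodule of a projective-free permutation module. This contradicts Theorem \ref{thm:pfsub}, which restricts the indecomposable submodules of projective-free permutation modules to $k$, $E_{t}$, $E_{t+1_{k}}$, $E_{\infty}$, $M_{3}$, $M_{5}$, none of which is isomorphic to $E_{f,n}$ under our hypotheses on $f$. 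Therefore $\ppdim(E_{f,n}) \neq 1$, and combined with Proposition \ref{prop:ppdimE} we conclude $\ppdim(E_{f,n}) = 2$.

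There is no genuine obstacle here beyond invoking the right ingredients in the right order; the real work was done in Section \ref{sec:proj} (computing $\ker(a) + \ker(a+b) + \ker(b)$ for $E_{f,n}$ with $f \notin \{t, t+1_{k}\}$) and in establishing Corollary \ref{cor:SES}. The only mild subtlety is confirming that the arguments from Remark \ref{rem:Esub} genuinely apply to the full range $n \geq 1$, but this is immediate from Proposition \ref{prop:efker}, where the $kV_{4}$-action on $\ker(a) = \ker(a+b) = \ker(b) \subseteq E_{f,n}$ is seen to be trivial, blocking any embedding of $E_{t}$, $E_{t+1_{k}}$ or $E_{\infty}$ into $E_{f,n}$.
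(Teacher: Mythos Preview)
Your proposal is correct and follows essentially the same argument as the paper: assume $\ppdim(E_{f,n}) = 1$, apply Corollary~\ref{cor:SES} together with $\Omega(E_{f,n}) \cong E_{f,n}$, use Remark~\ref{rem:Esub} to force $P = 0$, and then contradict Theorem~\ref{thm:pfsub}. The only difference is cosmetic---you spell out the $\ppdim = 0$ case explicitly and add some commentary---but the logical structure is identical.
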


\begin{proof}
	Suppose that $\ppdim(E_{f,n}) = 1$ and consider the short exact sequence:
	\[\begin{tikzcd}[cramped,sep=small]
		0 & P & {E_{f,n}} & N & 0
		\arrow[from=1-1, to=1-2]
		\arrow[from=1-2, to=1-3]
		\arrow[from=1-3, to=1-4]
		\arrow[from=1-4, to=1-5]
	\end{tikzcd}\]
	from Corollary \ref{cor:SES}, noting that $\Omega(E_{f,n}) \cong E_{f,n}$. As in Remark \ref{rem:Esub}, $E_{f,n}$ does not admit $E_{t}$, $E_{t+1_{k}}$ or $E_{\infty}$ as submodules, so we must have $P = 0$. Then $E_{f,n} \cong N$ is a submodule of a projective-free permutation module, but this contradicts Theorem \ref{thm:pfsub}. We conclude that $\ppdim(E_{f,n}) \neq 1$ and so $\ppdim(E_{f,n}) = 2$ by Proposition \ref{prop:ppdimE}.
\end{proof}

\begin{thm}
	Let $n \geq 3$ and either $f = t$, $f = t+1_{k}$ or $f = \infty$. Then $\ppdim(E_{f,n}) = 2$.
\end{thm}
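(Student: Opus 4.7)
The plan is to mirror the contradiction-by-quotient strategy used in the two preceding theorems. Assume for contradiction that $\ppdim(E_{f,n}) = 1$. Since the Heller operator fixes even-dimensional indecomposables, Corollary \ref{cor:SES} supplies a short exact sequence
\[
0 \to P \to E_{f,n} \to N \to 0
\]
in which the summands of $P$ lie in $\{E_t,\, E_{t+1_k},\, E_\infty\}$ and $N$ embeds into a projective-free permutation module. By Remark \ref{rem:Esub}, the only such indecomposable that embeds in $E_{f,n}$ is $E_f$ itself, so $P \cong E_f^{\oplus r}$ for some $r \geq 0$. Iterating Lemma \ref{lem:cokerE} to peel off the copies of $E_f$ one at a time identifies $N \cong E_{f, n-r}$ (with the convention $E_{f,0} \defeq 0$).

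From here I would deduce two numerical constraints on $r$. First, Theorem \ref{thm:pfsub} restricts the indecomposable summand $E_{f,n-r}$ of $N$ to the list $\{k,\, E_t,\, E_{t+1_k},\, E_\infty,\, M_3,\, M_5\}$, forcing $n - r \leq 1$, so $r \geq n-1$. Second, every morphism from $E_f$ to another $kV_4$-module lands inside $\ker(b)$, $\ker(a+b)$ or $\ker(a)$, according as $f = t,\, t+1_k$ or $\infty$; and each of these kernels inside $E_{f,n}$ has $k$-dimension $n + 1$ by Propositions \ref{prop:etker}, \ref{prop:et1ker} and \ref{prop:eiker}. The injection $E_f^{\oplus r} \hookrightarrow E_{f,n}$ thus yields $2r \leq n + 1$. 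Combining the two bounds gives $2(n-1) \leq n + 1$, hence $n \leq 3$; this alone rules out every $n \geq 4$.

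The remaining case $n = 3$ (forcing $r = 2$) is where the technical work concentrates. Here $E_f^{\oplus 2}$ must embed into the appropriate kernel of $k$-dimension $4$, and since both sides have dimension $4$ the embedding is a $kV_4$-module isomorphism. I would resolve this by computing the $kV_4$-structure of each kernel directly: in all three cases the kernel decomposes as $E_f \oplus k^{\oplus 2}$ (for $f = t+1_k$ this requires exhibiting the $E_{t+1_k}$ summand spanned by $\sum_{i=0}^{2} \gamma_i u_i$ and $\sum_{i=0}^{2} \gamma_i v_i$, using $\gamma_2 = 1_k$ from Proposition \ref{prop:et1ker}). Consequently, the kernel has head of $k$-dimension $3$, whereas $\hd(E_f^{\oplus 2})$ has $k$-dimension $2$, so no such isomorphism is possible. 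The resulting contradiction, together with Proposition \ref{prop:ppdimE}, yields $\ppdim(E_{f,n}) = 2$. The main obstacle is precisely the $n=3$ case, in particular the direct verification of the kernel's decomposition for $f = t+1_k$.
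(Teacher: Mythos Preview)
Your argument is correct, but it takes a longer route than the paper's. Both proofs start identically: assume $\ppdim(E_{f,n})=1$, apply Corollary~\ref{cor:SES} with $\Omega(E_{f,n})\cong E_{f,n}$, and use Remark~\ref{rem:Esub} to conclude $P\cong E_f^{\oplus r}$. The divergence is in how $r$ is pinned down. You bound $r$ from below via Theorem~\ref{thm:pfsub} (forcing $r\geq n-1$) and from above by a dimension count against the relevant kernel ($2r\leq n+1$), which handles $n\geq 4$ but leaves $n=3$ to a direct structural computation of that kernel. The paper instead observes that \emph{every} copy of $E_f$ inside $E_{f,n}$ contains one fixed nonzero element (e.g.\ $v_{n-1}$ when $f=t$), so two summands of $P$ would meet nontrivially; this forces $r=1$ outright. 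Then $N\cong E_{f,n-1}$ by Lemma~\ref{lem:cokerE}, and since $n-1\geq 2$ this already contradicts Theorem~\ref{thm:pfsub}. The paper's ``common element'' observation is sharper than your dimension bound and eliminates the $n=3$ case analysis entirely; your approach trades that single observation for more bookkeeping (the iterated-quotient argument and the kernel decomposition $E_f\oplus k^{\oplus 2}$), but it is self-contained and the head-dimension mismatch you use for $n=3$ is a clean way to finish.
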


\begin{proof}
	Suppose that $\ppdim(E_{f,n}) = 1$ and consider the short exact sequence:
	\[\begin{tikzcd}[cramped,sep=small]
		0 & P & {E_{f,n}} & N & 0
		\arrow[from=1-1, to=1-2]
		\arrow[from=1-2, to=1-3]
		\arrow[from=1-3, to=1-4]
		\arrow[from=1-4, to=1-5]
	\end{tikzcd}\]
	from Corollary \ref{cor:SES}, noting that $\Omega(E_{f,n}) \cong E_{f,n}$. We have $P \neq 0$, otherwise $E_{f,n} \cong N$ is a submodule of a projective-free permutation module, contradicting Theorem \ref{thm:pfsub}. Consider the case $f \defeq t$. As in Remark \ref{rem:Esub}, $E_{t,n}$ does not admit $E_{t+1_{k}}$ or $E_{\infty}$ as submodules, so $P \cong E_{t}^{\oplus m}$ for some $m \geq 1$. Note that any submodule of $E_{t,n}$ which is isomorphic to $E_{t}$ will have a $k$-basis of the form $\{\lambda u_{n-1} + \sum_{i=0}^{n-1}(\mu_{i}v_{i}),\, \lambda v_{n-1}\}$ for some $\lambda, \mu_{i} \in k$ with $\lambda \neq 0_{k}$, therefore any such submodule must contain $v_{n-1}$. This means that $E_{t}^{\oplus m}$ cannot be a submodule of $E_{t,n}$ for $m \geq 2$, otherwise the summands would have non-zero intersection. We conclude that $P \cong E_{t}$. An entirely similar argument shows that when $f \defeq t+1_{k}$ or $f \defeq \infty$, we have $P \cong E_{t+1_{k}}$ or $P \cong E_{\infty}$ respectively. By Lemma \ref{lem:cokerE}, in each of these cases we have $N \cong E_{f,n-1}$. This again contradicts Theorem \ref{thm:pfsub}, so we conclude that $\ppdim(E_{f,n}) \neq 1$, hence $\ppdim(E_{f,n}) = 2$ by Proposition \ref{prop:ppdimE}.
\end{proof}

\begin{thm}
	Let $n \geq 3$. Then $\ppdim(W_{2n+1}) = 2$.
\end{thm}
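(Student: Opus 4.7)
The plan is to argue by contradiction, following the template established for the $M_{2n+1}$ and $E_{f,n}$ cases in this subsection. Suppose $\ppdim(W_{2n+1}) = 1$ and apply Corollary \ref{cor:SES}. Since $\Omega(W_{2n+1}) \cong W_{2n+3}$, this yields a short exact sequence
\[
0 \to P \to W_{2n+3} \to N \to 0
\]
with $P$ a direct sum of copies of $E_{t}$, $E_{t+1_{k}}$, $E_{\infty}$ and $N$ a submodule of a projective-free permutation module.

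The first step I would carry out is to bound the multiplicities in $P$. Using the explicit description in Proposition \ref{prop:wker}, any injective morphism $E_{t} \hookrightarrow W_{2n+3}$ sends the socle of $E_{t}$ onto a scalar multiple of $v_{n+1}$; likewise the socle of any $E_{\infty} \hookrightarrow W_{2n+3}$ lands on a scalar multiple of $v_{1}$, and the socle of any $E_{t+1_{k}} \hookrightarrow W_{2n+3}$ lands on a scalar multiple of $\sum_{j=1}^{n+1} v_{j}$. Each of these target lines is a fixed one-dimensional subspace independent of the chosen embedding, so a second copy of the same module would intersect the first non-trivially and violate injectivity. Hence $P$ contains at most one copy of each of the three modules, and the total number $s$ of indecomposable summands of $P$ satisfies $s \in \{0,1,2,3\}$.

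Next, I would iterate Lemma \ref{lem:cokerW} to identify $N$. Writing $P = P_{1} \oplus \cdots \oplus P_{s}$ and setting $P_{\leq i} \defeq P_{1} \oplus \cdots \oplus P_{i}$, the composition $P_{i} \hookrightarrow P/P_{\leq i-1} \hookrightarrow W_{2n+3}/P_{\leq i-1}$ is injective, so applying Lemma \ref{lem:cokerW} at each step shows inductively that $W_{2n+3}/P_{\leq i} \cong W_{2n+3-2i}$. Taking $i = s$ gives $N \cong W_{2n+3-2s}$. For $n \geq 3$ and $s \leq 3$ we have $2n+3-2s \geq 3$, so $N$ is an indecomposable $W$-module of the form $W_{2m+1}$ with $m \geq 1$. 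By the irredundancy of the classification (Theorem \ref{thm:class}), no such module lies in the list $\{k, E_{t}, E_{t+1_{k}}, E_{\infty}, M_{3}, M_{5}\}$ provided by Theorem \ref{thm:pfsub}, which is the required contradiction. Combined with Proposition \ref{prop:ppdimW}, this yields $\ppdim(W_{2n+1}) = 2$.

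The main obstacle, compared to the proofs for $M_{2n+1}$ and $E_{f,n}$, is that $W_{2n+3}$ admits several distinct types of $E$-submodule simultaneously, so one must rule out the possibility of $P$ being a larger direct sum and then track the cokernel through several iterations of Lemma \ref{lem:cokerW} rather than a single application. The rigidity of the socle structure recorded in Proposition \ref{prop:wker} is exactly what keeps the case analysis finite and enables the iteration to go through; a secondary check is that at each step the target $W_{2n+3-2(i-1)}$ is large enough for Lemma \ref{lem:cokerW} to apply, which holds precisely because $s \leq 3 \leq n$.
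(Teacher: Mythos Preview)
Your proposal is correct and follows essentially the same approach as the paper's proof: both set up the short exact sequence from Corollary~\ref{cor:SES} with $\Omega(W_{2n+1}) \cong W_{2n+3}$, use the socle computations from Proposition~\ref{prop:wker} to bound each of $E_{t}$, $E_{t+1_{k}}$, $E_{\infty}$ to multiplicity at most one in $P$, then iterate Lemma~\ref{lem:cokerW} to identify $N$ as some $W_{2m+1}$ and derive a contradiction with Theorem~\ref{thm:pfsub}. The only cosmetic difference is that the paper phrases the iteration as an appeal to ``the isomorphism theorems'' while you spell out the filtration $P_{\leq i}$ explicitly and verify the hypothesis $n+2-i \geq 2$ of Lemma~\ref{lem:cokerW} at each step.
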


\begin{proof}
	Suppose that $\ppdim(W_{2n+1}) = 1$ and consider the short exact sequence:
	\[\begin{tikzcd}[cramped,sep=small]
		0 & P & {W_{2n+3}} & N & 0
		\arrow[from=1-1, to=1-2]
		\arrow[from=1-2, to=1-3]
		\arrow[from=1-3, to=1-4]
		\arrow[from=1-4, to=1-5]
	\end{tikzcd}\]
	from Corollary \ref{cor:SES}, noting that $\Omega(W_{2n+1}) \cong W_{2n+3}$. Any submodule of $W_{2n+3}$ which is isomorphic to $E_{t}$ will have a $k$-basis of the form $\{\lambda u_{n+1} + \sum_{i=1}^{n+1}(\mu_{i}v_{i}),\, \lambda v_{n+1}\}$ for some $\lambda, \mu_{i} \in k$ with $\lambda \neq 0_{k}$, therefore any such submodule must contain $v_{n+1}$. Similarly, any submodule isomorphic to $E_{t+1_{k}}$ or $E_{\infty}$ must contain $\sum_{i=1}^{n+1}(v_{i})$ or $v_{1}$ respectively. It follows that each of $E_{t}$, $E_{t+1_{k}}$ and $E_{\infty}$ can only occur at most once as a summand of $P$, otherwise the images in $W_{2n+3}$ of repeated summands would have non-zero intersection. Therefore $P$ must be isomorphic to either $0$,\, $E_{t}$,\, $E_{t+1_{k}}$,\, $E_{\infty}$,\, $E_{t} \oplus E_{t+1_{k}}$,\, $E_{t} \oplus E_{\infty}$,\, $E_{t+1_{k}} \oplus E_{\infty}$ or $E_{t} \oplus E_{t+1_{k}} \oplus E_{\infty}$. We know that $W_{2n+3}/E_{t} \cong W_{2n+3}/E_{t+1_{k}} \cong W_{2n+3}/E_{\infty} \cong W_{2n+1}$ regardless of the inclusions, by Lemma \ref{lem:cokerW}. It follows by the isomorphism theorems that $N \cong W_{2n+3}/P$ is isomorphic to either $W_{2n+3}$, $W_{2n+1}$, $W_{2n-1}$ or $W_{2n-3}$. In all cases, this contradicts Theorem \ref{thm:pfsub} since $N$ is a submodule of a projective-free permutation module. We conclude that $\ppdim(W_{2n+1}) \neq 1$ and so $\ppdim(W_{2n+1}) = 2$ by Proposition \ref{prop:ppdimW}.
\end{proof}

The main result, Theorem \ref{thm:main}, now follows.

\begin{rem}
	We saw that the permutation dimension of an arbitrary $kV_{4}$-module is bounded above by the maximum permutation dimension of its indecomposable summands. It is an interesting question as to whether this bound is an equality.
	
	Suppose that none of the summands have permutation dimension equal to two. Then there exists a summand with permutation dimension equal to one if and only if the permutation dimension of the module is equal to one, so in this case the bound is an equality.
	
	The question reduces to whether or not it is possible for a $kV_{4}$-module to have permutation dimension equal to one whilst admitting a summand that has permutation dimension equal to two. This problem remains open.
\end{rem}



\end{document}